\begin{document}  

\newcommand{\norm}[1]{\| #1 \|}
\def\N{\mathbb N}
\def\Z{\mathbb Z}
\def\Q{\mathbb Q}
\def\mod{\textit{\emph{~mod~}}}
\def\R{\mathcal R}
\def\S{\mathcal S}
\def\*  C{{*  \mathcal C}} 
\def\C{\mathcal C}
\def\D{\mathcal D}
\def\J{\mathcal J}
\def\M{\mathcal M}
\def\T{\mathcal T}          

\newcommand{\Hom}{{\rm Hom}}
\newcommand{\End}{{\rm End}}
\newcommand{\Ext}{{\rm Ext}}
\newcommand{\Mor}{{\rm Mor}\,}
\newcommand{\Aut}{{\rm Aut}\,}
\newcommand{\Hopf}{{\rm Hopf}\,}
\newcommand{\Ann}{{\rm Ann}\,}
\newcommand{\Ker}{{\rm Ker}\,}
\newcommand{\Reg}{{\rm Reg}\,}
\newcommand{\Coker}{{\rm Coker}\,}
\newcommand{\Img}{{\rm Im}\,}
\newcommand{\coim}{{\rm Coim}\,}
\newcommand{\Trace}{{\rm Trace}\,}
\newcommand{\Char}{{\rm Char}\,}
\newcommand{\Mod}{{\rm mod}}
\newcommand{\Spec}{{\rm Spec}\,}
\newcommand{\sgn}{{\rm sgn}\,}
\newcommand{\Id}{{\rm Id}\,}
\newcommand{\Com}{{\rm Com}\,}
\newcommand{\codim}{{\rm codim}}
\newcommand{\Mat}{{\rm Mat}}
\newcommand{\can}{{\rm can}}
\newcommand{\sign}{{\rm sign}}
\newcommand{\kar}{{\rm kar}}
\newcommand{\rad}{{\rm rad}}

\def\lan{\langle}
\def\ran{\rangle}
\def\ot{\otimes}

\def\id{{\small \textit{\emph{1}}}\!\!1}    
\def\To{{\multimap\!\to}}
\def\bigperp{{\LARGE\textrm{$\perp$}}} 
\newcommand{\QED}{\hspace{\stretch{1}}
\makebox[0mm][r]{$\Box$}\\}

\def\RR{{\mathbb R}}
\def\FF{{\mathbb F}}
\def\NN{{\mathbb N}}
\def\CC{{\mathbb C}}
\def\DD{{\mathbb D}}
\def\ZZ{{\mathbb Z}}
\def\QQ{{\mathbb Q}}
\def\HH{{\mathbb H}}
\def\units{{\mathbb G}_m}
\def\GG{{\mathbb G}}
\def\EE{{\mathbb E}}
\def\FF{{\mathbb F}}
\def\rightact{\hbox{$\leftharpoonup$}}
\def\leftact{\hbox{$\rightharpoonup$}}

\newcommand{\Aa}{\mathcal{A}}
\newcommand{\Bb}{\mathcal{B}}
\newcommand{\Cc}{\mathcal{C}}
\newcommand{\Dd}{\mathcal{D}}
\newcommand{\Ee}{\mathcal{E}}
\newcommand{\Ff}{\mathcal{F}}
\newcommand{\Hh}{\mathcal{H}}
\newcommand{\Ii}{\mathcal{I}}
\newcommand{\Mm}{\mathcal{M}}
\newcommand{\Pp}{\mathcal{P}}
\newcommand{\Rr}{\mathcal{R}}
\def\*  C{{}*  \hspace*  {-1pt}{\Cc}}

\def\text#1{{\rm {\rm #1}}}

\def\smashco{\mathrel>\joinrel\mathrel\triangleleft}
\def\cosmash{\mathrel\triangleright\joinrel\mathrel<}

\def\Nat{\dul{\rm Nat}}

\renewcommand{\subjclassname}{\textup{2000} Mathematics Subject
     Classification}

\newtheorem{prop}{Proposition}[section] 
\newtheorem{lemma}[prop]{Lemma}
\newtheorem{cor}[prop]{Corollary}
\newtheorem{theo}[prop]{Theorem}
                       
\theoremstyle{definition}
\newtheorem{Def}[prop]{Definition}
\newtheorem{ex}[prop]{Example}
\newtheorem{exs}[prop]{Examples}
\newtheorem{Not}[prop]{Notation}
\newtheorem{Ax}[prop]{Axiom}
\newtheorem{rems}[prop]{Remarks}
\newtheorem{rem}[prop]{Remark}
\newtheorem{op}[prop]{Open problem}
\newtheorem{conj}[prop]{Conjecture}

\def\smashco{\mathrel>\joinrel\mathrel\triangleleft}
\def\curlarrow{\mathrel\sim\joinrel\mathrel>}

\title{State pseudo equality algebras}

\author[Lavinia Corina Ciungu]{Lavinia Corina Ciungu} 

\begin{abstract}
Pseudo equality algebras were initially introduced by Jenei and $\rm K\acute{o}r\acute{o}di$ as a possible algebraic semantic for fuzzy type theory, and they have been revised by Dvure\v censkij and Zahiri under the name of JK-algebras. 
The aim of this paper is to investigate the internal states and the state-morphisms on pseudo equality algebras.
We define and study new classes of pseudo equality algebras, such as 
commutative, symmetric, pointed and compatible pseudo equality algebras. 
We prove that any internal state (state-morphism) on a pseudo equality algebra is also an internal state (state-morphism) on its corresponding pseudo BCK(pC)-meet-semilattice, and we prove the converse for the case of linearly ordered symmetric pseudo equality algebras. We also show that any internal state (state-morphism) on a 
pseudo BCK(pC)-meet-semilattice is also an internal state (state-morphism) on its corresponding pseudo equality algebra. The notion of a Bosbach state on a pointed pseudo equality algebra is introduced and it is proved that 
any Bosbach state on a pointed pseudo equality algebra is also a Bosbach state on its corresponding pointed 
pseudo BCK(pC)-meet-semilattice. For the case of an invariant pointed pseudo equality algebra, we show that 
the Bosbach states on the two structures coincide. \\  

\textbf{Keywords:} {Pseudo equality algebra, pseudo BCK-algebra, pseudo BCK-meet-semilattice, pointed pseudo equality algebra, compatible pseudo equality algebra, symmetric pseudo equality algebra, internal state, state-morphism,  Bosbach state} \\
\textbf{AMS classification (2000):} 03G25, 06F05, 06F35
\end{abstract}

\maketitle

\section{Introduction}

\emph{Fuzzy type theory} (FTT) has been developed by V. $\rm Nov\acute{a}k$ (\cite{Nov1}) as a fyzzy logic of higher order, the fuzzy version of the classical type theory of the classical logic of higher order.
Other formal systems of FTT have also been described by  V. $\rm Nov\acute{a}k$, and all these models are \emph{implication-based}, while the models of the classical type theory are \emph{equality based} having the identity (equality) as the principal connective. 
Since the first algebraic models for the set of truth values of FTT are residuated lattices, their basic operations are $\wedge$ (meet), $\vee$ (join), $\odot$ (multiplication) and $\rightarrow$ (residuum). In fuzzy logic the last operation is a semantic interpretation of the implication, while the logical equivalence is intepreted by the biresiduum $x\leftrightarrow y=(x\rightarrow y)\wedge (y\rightarrow x)$. 
Thus a basic connective has a semantic interpretation by a derived operation.
In order to overcome this discrepancy, we need a specific algebra of truth values for the fuzzy type theory. 
The first version of such an algebra has been introduced by V. $\rm Nov\acute{a}k$ (\cite{Nov2}) under 
the name of \emph{EQ-algebra} and a new concept of fuzzy type theory has been developed based on EQ-algebras (\cite{Nov9}). 
A fuzzy-equality based logic called \emph{EQ-logic} has also been introduced (\cite{Nov10}), 
while the EQ-logics with delta connective were defined and investigated in \cite{Dyba1}. \\
According to \cite{Nov9}, a non-commutative EQ-algebra is an algebra $(E, \wedge, \odot, \thicksim, 1)$ of the type $(2, 2, 2, 0)$ such that the following axioms are fulfilled for all $x, y, z, u\in E$: \\
$(E_1)$ $(E,\wedge,1)$ is a commutative idempotent monoid w.r.t $\le$ ($x\le y$ defined as $x\wedge y=x),$ \\
$(E_2)$ $(E,\odot,1)$ is a monoid such that the operation $\odot$ is isotone w.r.t. $\le,$  \\
$(E_3)$ $x\thicksim x=1,$ 
$\hspace*{10.7cm}$ (\emph{reflexivity}) \\
$(E_4)$ $((x\wedge y)\thicksim z)\odot (u\thicksim x)\le z\thicksim(u\wedge y),$ 
$\hspace*{6cm}$ (\emph{substitution}) \\
$(E_5)$ $(x\thicksim y)\odot (z\thicksim u)\le (x\thicksim z)\thicksim (y\thicksim u),$ 
$\hspace*{6cm}$ (\emph{congruence}) \\
$(E_6)$ $(x\wedge y\wedge z)\thicksim x\le (x\wedge y)\thicksim x,$ 
$\hspace*{5cm}$ (\emph{isotonicity of implication}) \\
$(E_7)$ $(x\wedge y)\thicksim x\le (x\wedge y\wedge z)\thicksim (x\wedge z),$ 
$\hspace*{4cm}$ (\emph{antitonicity of implication}) \\
$(E_8)$ $x\odot y\le x\thicksim y$. 
$\hspace*{9.8cm}$ (\emph{boundedness}) \\
An EQ-algebra is \emph{commutative} if $\odot$ is commutative. \\
The operation $\thicksim$ is a fuzzy equality and the implication $\rightarrow$ is defined by 
$x\rightarrow y=(x\wedge y)\thicksim x$, hence the tie between multiplication and residuation is weaker than in the 
case of residuated lattices. In this sense, EQ-algebras generalize the residuated lattices. \\
As S. Jenei mentioned in \cite{Jen2}, if the product operation in EQ-algebras is replaced by another binary operation 
smaller or equal than the original product we still obtain an EQ-algebra, and this fact might make it difficult to 
obtain certain algebraic results. \\
For this reason, S. Jenei introduced in \cite{Jen2} a new structure, called \emph{equality algebra} consisting of two 
binary operations - meet and equivalence, and constant $1$. 
It was proved in \cite{Jen1}, \cite{Ciu1} that any equality algebra has a corresponding BCK-meet-semilattice 
satisfying the \emph{contraction} condition (BCK(C)-meet-semilattice, for short)  
and any BCK(C)-meet-semilattice has a corresponding equality algebra. 
Since the equality algebras could also be ``candidates" for a possible algebraic semantics for fuzzy type theory, 
their study is highly motivated. 
As a generalization of equality algebras, Jenei and $\rm K\acute{o}r\acute{o}di$ introduced in \cite{Jen1} 
a concept of \emph{pseudo equality algebras} and proved that the pseudo equality algebras are term equivalent to 
pseudo BCK-meet-semilattices. 
In \cite{Ciu1} a gap was found in the proof of this result and a counterexample was given as well as a correct 
version of it. The correct version of the corresponding result for equality algebras was also proved. 
Moreover, Dvure\v censkij and Zahiri showed in \cite{Dvu7} that every pseudo equality algebra in the sense of 
\cite{Jen1} is an equality algebra and they defined and investigated a new concept of pseudo equality algebras 
(called JK-algebras) and established a connection between pseudo equality algebras and a special class of pseudo BCK-meet-semilattices (pseudo BCK(pC)-meet-semilattices).  
Apart from their logical interest, equality algebras as well as pseudo equality algebras seem to have important algebraic properties and it is worth studying them from an algebraic point of view. \\

In 1995 D. Mundici introduced an analogue of the probability measure on MV-algebras, called a \emph{state} (\cite{Mun1}), as averaging process for formulas in \L ukasiewicz logic. After that, the states on other many-valued logic algebras have been intensively studied.  
Flaminio and Montagna were the first to present a unified approach to states and probabilistic many-valued logic in a 
logical and algebraic setting (\cite{Fla1}). They added a unary operation, called \emph{internal state} or 
\emph{state operator} to the language of MV-algebras which preserves the usual properties of states. 
A more powerful type of logic can be given by algebraic structures with internal states, and they are also very interesting varieties of universal algebras. 
Di Nola and Dvure\v censkij introduced the notion of state-morphism MV-algebra which is a stronger variation of state MV-algebra (\cite{DiDv1}, \cite{DiDv1-e}).
The state BCK-algebras and state-morphim BCK-algebras were defined and studied in \cite{Bor1}, and 
recently the state operators and the state-morphism operators on equality algebras have been introduced and 
investigated in \cite{Ciu5}. \\

In this paper we define and study the internal states of type I and type II and the state-morphisms on pseudo 
equality algebras. 
We investigate the connections between internal states and state-morphisms on a pseudo equality algebra and those 
on its corresponding pseudo BCK(pC)-meet-semilattice. 
We prove that any internal state (state-morphism) on a pseudo equality algebra is also an internal state 
(state-morphism) on its corresponding pseudo BCK(pC)-meet-semilattice, and we prove the converse for the case of linearly ordered symmetric pseudo equality algebras. We also show that any internal state (state-morphism) on a 
pseudo BCK(pC)-meet-semilattice is also an internal state (state-morphism) on its corresponding pseudo equality algebra. 
The notion of a Bosbach state on a pointed pseudo equality algebra is introduced and it is proved that 
any Bosbach state on a pointed pseudo equality algebra is also a Bosbach state on its corresponding pointed 
pseudo BCK(pC)-meet-semilattice. For the case of an invariant pointed pseudo equality algebra, we show that 
the Bosbach states on the two structures coincide.
The classes of commutative, symmetric, pointed and compatible pseudo equality algebras 
are defined and their properties are investigated. 
We prove that in the case of commutatice pseudo equality algebras the two types of internal states coincide 
and we also show that a pseudo-hoop is a compatible pseudo equality algebra. 
We prove that a state-morphism on the set of regular elements on a compatible pseudo equality algebra $A$ can be extended to a state morphism on $A$. Additionally, we give new properties of pseudo equality algebras.

$\vspace*{5mm}$

\section{Pseudo equality algebras}

Pseudo equality algebras have been firstly defined by Jenei and $\rm K\acute{o}r\acute{o}di$ in \cite{Jen1} 
as a generalization of equality algebras. 
Dvure\v censkij and Zahiri showed in \cite{Dvu7} that every pseudo equality algebra in the sense of 
\cite{Jen1} is an equality algebra and they defined and investigated a new concept of pseudo equality algebras 
(JK-algebras) and established a connection between pseudo equality algebras and a special class of pseudo BCK-meet-semilattices.
In this section we recall the main notions and results and we present new properties of pseudo equality algebras. 

\begin{Def} \label{ps-eq-05} $\rm($\cite{Dvu7}$\rm)$
A \emph{pseudo equality algebra} (or a \emph{JK-algebra}) is an algebra 
$\mathcal{A}=(A, \wedge, \thicksim, \backsim, 1)$ of the type $(2, 2, 2, 0)$ such that the following axioms are fulfilled for all $x, y, z\in A$: \\
$(A_1)$ $(A, \wedge, 1)$ is a meet-semilattice with top element $1,$ \\
$(A_2)$ $x \thicksim x = x \backsim x = 1,$ \\ 
$(A_3)$ $x \thicksim 1 = 1 \backsim x = x,$ \\
$(A_4)$ $x \le y \le z$ implies  
$x \thicksim z \le y \thicksim z$, $x \thicksim z \le x \thicksim y$, $z \backsim x \le z \backsim y$ and  
$z \backsim x \le y \backsim x,$ \\ 
$(A_5)$ $x \thicksim y \le (x \wedge z) \thicksim (y \wedge z)$  and 
        $x \backsim y \le (x \wedge z) \backsim (y \wedge z),$ \\  
$(A_6)$ $x \thicksim y \le (z \thicksim x) \backsim (z \thicksim y)$ and 
        $x \backsim y \le (x \backsim z) \thicksim(y \backsim z),$ \\
$(A_7)$ $x \thicksim y \le (x \thicksim z) \thicksim (y \thicksim z)$ and 
        $x \backsim y \le (z \backsim x) \backsim (z \backsim y)$. 
\end{Def}        
        
The operation $\wedge$ is called \emph{meet}(\emph{infimum}) and $\thicksim$, $\backsim$ are called 
\emph{equality} operations. We write $x \le y$ (and $y \ge x$) iff $x \wedge y = x$. \\
In the algebra $\mathcal{A}$ other two operations are defined, called \emph{implications}: \\
$\hspace*{5cm}$ $x \rightarrow y = (x\wedge y) \thicksim x$ \\
$\hspace*{5cm}$ $x \rightsquigarrow y = x \backsim (x \wedge y)$. \\
In the sequel we will also refer to the pseudo equality algebra $(A, \wedge, \thicksim, \backsim, 1)$ 
by its universe $A$. 
We will agree that $\thicksim$, $\backsim$, $\rightarrow$ and $\rightsquigarrow$ have higher priority than the operation $\wedge$. \\ 
A pseudo equality algebra $A$ is called \emph{bounded} if there exists an element $0\in A$ such that $0\le x$ 
for all $x\in A$. A bounded pseudo equality algebra is denoted by $(A, \wedge, \thicksim, \backsim, 0, 1)$.  

\begin{prop} \label{ps-eq-05-10} $\rm($\cite{Dvu7}$\rm)$ 
In any pseudo equality algebra $(A, \wedge, \thicksim, \backsim, 1)$ the following hold for all $x, y, z\in A$: \\
$(1)$ $x\wedge y\thicksim x\le x\wedge y\wedge z\thicksim x\wedge z$ and 
      $x\rightarrow y\le x\wedge z\le x\wedge z\rightarrow y;$ \\ 
$(2)$ $x\backsim x\wedge y\le x\wedge z\backsim x\wedge y\wedge z$ and 
      $x\rightsquigarrow y\le x\wedge z\rightsquigarrow y$. 
\end{prop}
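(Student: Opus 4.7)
The plan is to derive both parts of the proposition directly from axiom $(A_5)$ by appropriate substitutions, and then rewrite the results using the definitions of the implications $\rightarrow$ and $\rightsquigarrow$. Because $(A,\wedge,1)$ is a meet-semilattice, the operation $\wedge$ is associative and commutative, which I will use silently when juggling the parentheses $(x\wedge y)\wedge z = x\wedge(y\wedge z) = (x\wedge z)\wedge y$.

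For part $(1)$, I would instantiate the first half of $(A_5)$, namely $a \thicksim b \le (a\wedge c)\thicksim(b\wedge c)$, at $a = x\wedge y$, $b = x$, $c = z$. This gives $(x\wedge y)\thicksim x \le ((x\wedge y)\wedge z)\thicksim (x\wedge z) = (x\wedge y\wedge z)\thicksim(x\wedge z)$, which is exactly the first displayed inequality. For the second displayed inequality, unfold the definitions: $x\rightarrow y = (x\wedge y)\thicksim x$ by definition, and $(x\wedge z)\rightarrow y = ((x\wedge z)\wedge y)\thicksim (x\wedge z) = (x\wedge y\wedge z)\thicksim (x\wedge z)$. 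The inequality just proved then reads precisely $x\rightarrow y \le (x\wedge z)\rightarrow y$.

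Part $(2)$ is entirely parallel using the ``$\backsim$'' half of $(A_5)$. Instantiate $a \backsim b \le (a\wedge c)\backsim (b\wedge c)$ at $a = x$, $b = x\wedge y$, $c = z$ to obtain $x\backsim(x\wedge y) \le (x\wedge z)\backsim ((x\wedge y)\wedge z) = (x\wedge z)\backsim (x\wedge y\wedge z)$, which is the first inequality of $(2)$. Then since $x\rightsquigarrow y = x\backsim(x\wedge y)$ and $(x\wedge z)\rightsquigarrow y = (x\wedge z)\backsim(x\wedge z\wedge y)$, that inequality translates into the second assertion $x\rightsquigarrow y \le (x\wedge z)\rightsquigarrow y$.

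There is no real obstacle here: the proposition is axiom $(A_5)$ repackaged through the definitions of the two implications. The only care required is bookkeeping of meets under associativity and commutativity, which is harmless. If the statement as printed is instead to be read as a double inequality involving $x\wedge z$ in the middle, the same substitutions still work, since $(x\wedge y)\thicksim x$ need not be compared with $x\wedge z$ itself and the natural reading (and the one consistent with the definitions) is the single inequality $x\rightarrow y \le (x\wedge z)\rightarrow y$.
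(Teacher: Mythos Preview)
Your proof is correct and is precisely the natural argument: the proposition is nothing more than axiom $(A_5)$ instantiated appropriately and rewritten via the definitions of $\rightarrow$ and $\rightsquigarrow$. The paper does not give its own proof (the result is quoted from \cite{Dvu7}), but your derivation is the standard one, and your reading of the second inequality in $(1)$ as $x\rightarrow y \le (x\wedge z)\rightarrow y$ (matching the form of $(2)$) is the correct interpretation of what is clearly a typographical slip in the printed statement.
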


\begin{prop} \label{ps-eq-10} $\rm($\cite{Dvu7}$\rm)$ 
In any pseudo equality algebra $(A, \wedge, \thicksim, \backsim, 1)$ the following hold for all $x, y, z\in A$: \\
$(1)$ $x\thicksim y\le y\rightarrow x$ and $x\backsim y\le x\rightsquigarrow y;$ \\
$(2)$ $x\le ((y\thicksim x)\backsim y)\wedge (y \thicksim (x\backsim y));$ \\
$(3)$ $x\backsim y=1$ or $y\thicksim x=1$ imply $x\le y;$ \\
$(4)$ $x\thicksim y=1$ implies $z\thicksim x\le z\thicksim y$ and 
      $x\backsim y=1$ implies $y\backsim z\le x\backsim z;$ \\
$(5)$ $x\le y$ iff $x\rightarrow y=1$ iff $x\rightsquigarrow y=1;$ \\
$(6)$ $1\rightarrow x=1\rightsquigarrow x=x$, 
      $x\rightarrow 1=x\rightsquigarrow 1=x\rightarrow x=x\rightsquigarrow x=1$, 
      $x\rightarrow x=x\rightsquigarrow x=1;$ \\
$(7)$ $x\le (y \rightarrow x)\wedge (y \rightsquigarrow x);$ \\
$(8)$ $x\le ((x \rightarrow y)\rightsquigarrow y)\wedge ((x \rightsquigarrow y)\rightarrow y);$ \\
$(9)$ $x\rightarrow y\le (y\rightarrow z)\rightsquigarrow (x\rightarrow z)$ and 
      $x\rightsquigarrow y\le (y\rightsquigarrow z)\rightarrow (x\rightsquigarrow z);$ \\  
$(10)$ $x\le y\rightarrow z$ iff $y\le x\rightsquigarrow z;$ \\
$(11)$ $x\rightarrow (y\rightsquigarrow z)=y\rightsquigarrow (x\rightarrow z);$ \\
$(12)$ $x\rightarrow y\le (x\wedge z)\rightarrow (y\wedge z)$ and  
       $x\rightsquigarrow y\le (x\wedge z)\rightsquigarrow (y\wedge z);$ \\
$(13)$ $x\rightarrow y=x\rightarrow (x\wedge y)$ and $x\rightsquigarrow y=x\rightsquigarrow (x\wedge y);$ \\
$(14)$ $1\thicksim x=x\backsim 1;$ \\
$(15)$ if $x\le y$, then $x\le (x\thicksim y)\wedge (y\backsim x);$ \\
$(16)$ $x\thicksim y\le 1\thicksim (y\thicksim x)$ and $x\backsim y\le 1\thicksim (y\backsim x)$. 
\end{prop}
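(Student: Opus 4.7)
The plan is to verify the sixteen items in an order where each depends only on the axioms $(A_1)$--$(A_7)$ and on items already established. Item (1) is immediate from $(A_5)$: specialising $z = y$ in the first inequality (using $y \wedge y = y$) gives $x \thicksim y \le (x \wedge y) \thicksim y = y \rightarrow x$, and $z = x$ in the second gives $x \backsim y \le x \backsim (x \wedge y) = x \rightsquigarrow y$. Item (2) is the pivotal auxiliary: applying the first half of $(A_6)$ with $b = 1$ and simplifying via $(A_3)$ yields $x = x \thicksim 1 \le (y \thicksim x) \backsim (y \thicksim 1) = (y \thicksim x) \backsim y$, while the second half with $a = 1$ yields $x = 1 \backsim x \le (1 \backsim y) \thicksim (x \backsim y) = y \thicksim (x \backsim y)$. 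Items (3)--(6) are then formal: (3) feeds the hypothesis into (2) and simplifies via $(A_3)$; (4) uses $(A_6)$ to force the relevant residual to equal $1$ and concludes with (3); (5) combines $(A_2)$ with (3); (6) is direct substitution.

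For the symmetry identity (14) I would apply $(A_6)$ twice: the first half with $a = 1$, $b = x$, $c = x$ gives $1 \thicksim x \le (x \thicksim 1) \backsim (x \thicksim x) = x \backsim 1$ after $(A_2), (A_3)$, and the second half with $a = x$, $b = 1$, $c = x$ gives the reverse inequality. Item (16) then follows from $(A_7)$: its first half with $c = x$ gives $x \thicksim y \le (x \thicksim x) \thicksim (y \thicksim x) = 1 \thicksim (y \thicksim x)$, and its second half with $c = y$ gives $x \backsim y \le (y \backsim x) \backsim (y \backsim y) = (y \backsim x) \backsim 1$, which (14) rewrites as $1 \thicksim (y \backsim x)$.

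The core of the proof is the pseudo-adjointness in item (10), $x \le y \rightarrow z$ iff $y \le x \rightsquigarrow z$, and I expect this to be the main obstacle. The plan is to unfold both sides via the definitions of $\rightarrow, \rightsquigarrow$ and propagate the inequalities using $(A_6), (A_7)$ together with (4); alternatively, one can exploit the term-equivalence between pseudo equality algebras and pseudo BCK(pC)-meet-semilattices established in \cite{Dvu7}, in which adjointness is essentially axiomatic. Once (10) is available, the remaining items cascade: (7) is immediate, since $x \le y \rightarrow x$ iff $y \le x \rightsquigarrow x = 1$ by (6), and symmetrically for the second half; (8) applies (7) to the terms $x \rightarrow y$ and $x \rightsquigarrow y$; (9) is transitivity via repeated use of (10); (11) iterates (10) on both sides; items (12) and (13) follow from $(A_5)$, the definitions of the implications, and (10). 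Finally, (15) specialises (7) to the case $x \le y$, which forces $y \wedge x = x$ and rewrites $y \rightarrow x$ as $x \thicksim y$ and $y \rightsquigarrow x$ as $y \backsim x$.
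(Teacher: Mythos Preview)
The paper does not prove this proposition; it is quoted verbatim from \cite{Dvu7} with no argument given, so there is nothing to compare your sketch against directly. That said, let me assess your plan on its own merits.

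Your treatment of items (1)--(6), (13), (14), (15), (16) is correct and essentially the standard route. Item (12) also follows immediately from $(A_5)$ and the definition of $\rightarrow,\rightsquigarrow$, as you indicate; and (13) is in fact a triviality of the definitions, since $x\rightarrow(x\wedge y)=(x\wedge(x\wedge y))\thicksim x=(x\wedge y)\thicksim x=x\rightarrow y$.

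The one genuine gap is your handling of item (10). Your proposed route via $(A_6),(A_7)$ and (4) is not obviously workable, and the fallback of invoking the term-equivalence with pseudo BCK(pC)-meet-semilattices is circular: that equivalence (Theorem~\ref{ps-eq-110} here) is itself a result of \cite{Dvu7} whose proof presumably relies on the very properties you are trying to establish. A direct argument does exist, but it goes through $(A_5)$ and $(A_4)$ rather than $(A_6),(A_7)$. Specifically: from (2) one has $y\le((y\wedge z)\thicksim y)\backsim(y\wedge z)=(y\rightarrow z)\backsim(y\wedge z)$; applying $(A_5)$ with the third variable set to $x$ and using the hypothesis $x\le y\rightarrow z$ gives $(y\rightarrow z)\backsim(y\wedge z)\le x\backsim(x\wedge y\wedge z)$; finally $(A_4)$ with $x\wedge y\wedge z\le x\wedge z\le x$ yields $x\backsim(x\wedge y\wedge z)\le x\backsim(x\wedge z)=x\rightsquigarrow z$. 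The converse direction is symmetric.

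A smaller point: you derive (7) from (10), but (7) is in fact a one-line consequence of $(A_5)$ alone (take $y=1$ in $x\thicksim y\le(x\wedge z)\thicksim(y\wedge z)$, giving $x\le(x\wedge z)\thicksim z=z\rightarrow x$, and dually), so the dependence can be removed. Once (7) and (10) are both in hand, your cascade for (8), (9), (11) is fine.
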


\begin{prop} \label{ps-eq-20}
In any pseudo equality algebra $(A, \wedge, \thicksim, \backsim, 1)$ the following hold for all $x, y\in A$:\\
$(1)$ $y\le (x\wedge y\thicksim x)\wedge (x\backsim x\wedge y);$ \\ 
$(2)$ if $x\le y$, then $x\le (x\thicksim y)\wedge (y\backsim x);$ \\
$(3)$ $x\le ((x\wedge y\thicksim x)\backsim y)\wedge (y\thicksim (x\backsim x\wedge y));$ \\
$(4)$ $y\le ((x\wedge y\thicksim x)\backsim y)\wedge (y\thicksim (x\backsim x\wedge y));$ \\
$(5)$ $x\thicksim y\le x\wedge y\thicksim y$ and $x\backsim y\le x\backsim x\wedge y$. 
\end{prop}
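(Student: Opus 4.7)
The plan is to recast each of the five claims in terms of the implications $\rightarrow$ and $\rightsquigarrow$, using the identities $x\rightarrow y=(x\wedge y)\thicksim x$ and $x\rightsquigarrow y=x\backsim(x\wedge y)$, and then reduce everything to parts of Proposition~\ref{ps-eq-10}. With the convention on priorities, the building blocks rewrite as $x\wedge y\thicksim x=x\rightarrow y$ and $x\backsim x\wedge y=x\rightsquigarrow y$, so the proposition is really a list of statements about the two implications.

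For (1), the right-hand side is exactly $(x\rightarrow y)\wedge(x\rightsquigarrow y)$, so the claim is Proposition~\ref{ps-eq-10}(7) after renaming variables. Part (2) is verbatim Proposition~\ref{ps-eq-10}(15), so there is nothing to do. For (5), I would apply axiom $(A_5)$ with $z:=y$ for the first inequality and $z:=x$ for the second; the identities $y\wedge y=y$ and $x\wedge x=x$ then give $x\thicksim y\le(x\wedge y)\thicksim y$ and $x\backsim y\le x\backsim(x\wedge y)$ on the nose.

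The interesting parts are (3) and (4), and the key observation is that once (1) is in hand, the expressions $(x\rightarrow y)\rightsquigarrow y$ and $(x\rightsquigarrow y)\rightarrow y$ simplify. Indeed, (1) gives $y\le x\rightarrow y$ and $y\le x\rightsquigarrow y$, hence $(x\rightarrow y)\wedge y=y$ and $(x\rightsquigarrow y)\wedge y=y$; unfolding the definitions then collapses
\[
(x\rightarrow y)\rightsquigarrow y=(x\rightarrow y)\backsim y,\qquad (x\rightsquigarrow y)\rightarrow y=y\thicksim(x\rightsquigarrow y).
\]
For (3), Proposition~\ref{ps-eq-10}(8) now reads exactly $x\le((x\rightarrow y)\backsim y)\wedge(y\thicksim(x\rightsquigarrow y))$, which is the desired inequality. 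For (4), I would instead apply Proposition~\ref{ps-eq-10}(15) to each of the two inequalities $y\le x\rightarrow y$ and $y\le x\rightsquigarrow y$ coming from (1); this yields $y\le(x\rightarrow y)\backsim y$ and $y\le y\thicksim(x\rightsquigarrow y)$, and taking the meet gives (4).

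There is no substantive obstacle here: every step is either a direct instance of an earlier result or a rewrite using the definitions of $\rightarrow$ and $\rightsquigarrow$. The only thing one has to be careful about is the priority convention, so that $x\wedge y\thicksim x$ is correctly parsed as $(x\wedge y)\thicksim x$ and not otherwise; once that is fixed, (1) feeds into (3) and (4), and (5) is isolated from the others.
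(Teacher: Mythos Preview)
Your proof is correct and essentially parallels the paper's own argument: parts (1), (2), and (4) are handled identically (via Proposition~\ref{ps-eq-10}(7), (15), and then (15) again applied to the inequalities from (1)). The only cosmetic differences are in (3), where the paper works directly from axioms $(A_6)$ and $(A_4)$ rather than invoking Proposition~\ref{ps-eq-10}(8) together with your simplification $(x\rightarrow y)\rightsquigarrow y=(x\rightarrow y)\backsim y$, and in (5), where the paper cites Proposition~\ref{ps-eq-10}(1) instead of $(A_5)$; these are the same underlying facts in slightly different packaging.
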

\begin{proof}
$(1)$ It follows by Proposition \ref{ps-eq-10}$(7)$. \\
$(2)$ Applying $(1)$ it follows that 
$x\le (x\wedge y\thicksim y)\wedge (y\backsim x\wedge y)=(x\thicksim y)\wedge (y\backsim x);$ \\
$(3)$ Applying $(A_6)$ we have: \\
$\hspace*{2cm}$
$x=x\thicksim 1 \le (x\wedge y\thicksim x) \backsim (x\wedge y\thicksim 1)=
(x\wedge y\thicksim x)\backsim x\wedge y$. \\
From $x\wedge y\le y\le x\wedge y\thicksim x$, applying $(A_4)$ we get 
$(x\wedge y \thicksim x)\backsim x\wedge y\le (x\wedge y \thicksim x)\backsim y$, so 
$x\le (x\wedge y\thicksim x)\backsim y$. 
Similarly $x=1\backsim x\le (1\backsim x\wedge y)\thicksim (x\backsim x\wedge y)=
x\wedge y\thicksim (x\backsim x\wedge y)$. \\
Since $x\wedge y\le y\le x\backsim x\wedge y$, applying $(A_4)$ we get 
$x\wedge y\thicksim (x\backsim x\wedge y)\le y\thicksim (x\backsim x\wedge y)$, hence 
$x\le y\thicksim (x\backsim x\wedge y)$. \\
$(4)$ From $y\le x\wedge y \thicksim x$ and $y\le x\backsim x\wedge y$ , applying $(2)$ we get 
$y\le (x\wedge y\thicksim x)\backsim y$ and $y\le y\thicksim (x\backsim x\wedge y)$, respectively. \\
$(5)$ It is a consequence of Proposition \ref{ps-eq-10}$(1)$.
\end{proof}

\begin{prop} \label{ps-eq-20-10}
Let $(A, \wedge, \thicksim, \backsim, 1)$ be a pseudo equality algebra and let $x, y\in A$ such that $x\le y$.  Then the following hold for all $z\in A$:\\
$(1)$ $y\wedge z\thicksim y\le x\wedge z\thicksim x$ and $y\backsim y\wedge z\le x\backsim x\wedge z;$ \\ 
$(2)$ $z\wedge x\thicksim z\le z\wedge y\thicksim z$ and $z\backsim z\wedge x\le z\backsim z\wedge y$.
\end{prop}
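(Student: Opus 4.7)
The plan is to recognize both parts as statements about the induced implications $\rightarrow$ and $\rightsquigarrow$, or as direct applications of the monotonicity axiom $(A_4)$, so no real computation is needed beyond unfolding definitions.

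For part $(1)$, I would first rewrite each side using the definitions of the implications: $y\wedge z\thicksim y = y\rightarrow z$ and $x\wedge z\thicksim x = x\rightarrow z$, and analogously $y\backsim y\wedge z = y\rightsquigarrow z$ and $x\backsim x\wedge z = x\rightsquigarrow z$. Under this reading both inequalities become antitonicity of the implications in the first argument, i.e.\ $x\le y$ implies $y\rightarrow z\le x\rightarrow z$ and $y\rightsquigarrow z\le x\rightsquigarrow z$. To obtain these I would combine Proposition \ref{ps-eq-10}$(9)$, which gives $x\rightarrow y\le (y\rightarrow z)\rightsquigarrow (x\rightarrow z)$ and $x\rightsquigarrow y\le (y\rightsquigarrow z)\rightarrow (x\rightsquigarrow z)$, with Proposition \ref{ps-eq-10}$(5)$, which converts $x\le y$ into $x\rightarrow y = x\rightsquigarrow y = 1$. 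The right-hand sides then equal $1$, and a second application of $(5)$ extracts the desired inequalities.

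For part $(2)$, I would simply observe that $x\le y$ yields the chain $z\wedge x\le z\wedge y\le z$, and feed this chain into axiom $(A_4)$. The clause ``$u\le v\le w$ implies $u\thicksim w\le v\thicksim w$'' applied with $u=z\wedge x$, $v=z\wedge y$, $w=z$ gives $z\wedge x\thicksim z\le z\wedge y\thicksim z$, and the clause ``$u\le v\le w$ implies $w\backsim u\le w\backsim v$'' with the same choices gives $z\backsim z\wedge x\le z\backsim z\wedge y$. No step here is genuinely difficult; the only point that takes a moment of thought is the translation of the $\thicksim$/$\backsim$ expressions in $(1)$ into $\rightarrow$/$\rightsquigarrow$ form, because without that rewriting Proposition \ref{ps-eq-10}$(9)$ is not obviously applicable.
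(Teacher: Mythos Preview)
Your proof is correct. For part $(2)$ you and the paper do the same thing: feed the chain $z\wedge x\le z\wedge y\le z$ (the paper writes it as $x\wedge y\wedge z\le y\wedge z\le z$ and then uses $x\wedge y=x$) into axiom $(A_4)$.

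For part $(1)$ the routes differ. The paper stays entirely in the $\thicksim/\backsim$ language and invokes Proposition~\ref{ps-eq-05-10}, which already contains the inequality $u\wedge v\thicksim u\le u\wedge v\wedge w\thicksim u\wedge w$; substituting and using $x\wedge y=x$ gives the result in one line. You instead translate to the induced implications and argue antitonicity of $\rightarrow,\rightsquigarrow$ in the first argument via Proposition~\ref{ps-eq-10}$(9)$ and $(5)$. Your argument is slightly longer and leans on later-listed properties, but it has the advantage of making transparent that part $(1)$ is nothing more than the familiar antitonicity $x\le y\Rightarrow y\rightarrow z\le x\rightarrow z$, which is conceptually cleaner. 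The paper's argument is shorter and self-contained at the $\thicksim/\backsim$ level.
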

\begin{proof}
$(1)$ By Proposition \ref{ps-eq-05-10}$(1)$ we have
$y\wedge z\thicksim y\le x\wedge y\wedge z\thicksim x\wedge y=x\wedge z\thicksim x$. \\
Similarly from Proposition \ref{ps-eq-05-10}$(2)$ it follows that 
$y\backsim y\wedge z\le x\wedge y\backsim x\wedge y\wedge z=x\backsim x\wedge z$. \\
$(2)$ From $x\wedge y\wedge z\le y\wedge z\le z$, applying $(A_4)$ we have 
      $x\wedge y\wedge z\thicksim z\le y\wedge z\thicksim z$ and 
      $z\backsim x\wedge y \wedge z\le z\backsim y\wedge z$, that is 
      $z\wedge x\thicksim z\le z\wedge y\thicksim z$ and $z\backsim z\wedge x\le z\backsim z\wedge y$.
\end{proof}

\begin{prop} \label{ps-eq-30}
Let $(A, \wedge, \thicksim, \backsim, 1)$ be a pseudo equality algebra. Then the following hold for all $x, y\in A$: \\
$(1)$ $y\thicksim ((x\wedge y\thicksim x)\backsim y)=x\wedge y\thicksim x;$ \\
$(2)$ $(y\thicksim (x\backsim x\wedge y))\backsim y=x\backsim x\wedge y$. 
\end{prop}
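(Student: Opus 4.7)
The plan is to prove each identity by establishing both inequalities separately, combining Proposition \ref{ps-eq-10}(2) for one direction with axiom $(A_5)$ for the other; the content of Proposition \ref{ps-eq-20}(3) supplies the auxiliary bound that makes $(A_5)$ collapse into the desired form.

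For part $(1)$, I abbreviate $a := x\wedge y\thicksim x$ and $b := a\backsim y$, so the goal becomes $y\thicksim b = a$. Proposition \ref{ps-eq-20}(3) furnishes the key fact $x\le b$ (while Proposition \ref{ps-eq-20}(4) records the companion fact $y\le b$). The inequality $a\le y\thicksim b$ follows at once from Proposition \ref{ps-eq-10}(2), applied with its $x$ replaced by $a$: the second conjunct of that statement is precisely $y\thicksim(a\backsim y) = y\thicksim b$. For the reverse inequality, I would invoke axiom $(A_5)$ with $z:=x$, obtaining $y\thicksim b \le (y\wedge x)\thicksim(b\wedge x)$; the bound $x\le b$ collapses $b\wedge x$ to $x$, and the right-hand side becomes exactly $(x\wedge y)\thicksim x = a$.

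Part $(2)$ is handled by a dual argument. Setting $c := x\backsim x\wedge y$ and $d := y\thicksim c$, the goal is $d\backsim y = c$. Proposition \ref{ps-eq-20}(3) now yields $x\le d$; the first conjunct of Proposition \ref{ps-eq-10}(2), applied with its $x$ replaced by $c$, provides $c\le(y\thicksim c)\backsim y = d\backsim y$; and the ``$\backsim$''-version of axiom $(A_5)$ with $z:=x$ gives $d\backsim y \le (d\wedge x)\backsim(y\wedge x) = x\backsim(x\wedge y) = c$, again using $x\le d$ to simplify $d\wedge x = x$.

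The only non-mechanical step is recognizing that Proposition \ref{ps-eq-20}(3)'s conclusion $x\le(x\wedge y\thicksim x)\backsim y$ is exactly the lemma required to make $(A_5)$ produce the sharp upper bound; once this hook is identified, everything else reduces to matching conjuncts from the cited results.
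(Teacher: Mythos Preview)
Your proof is correct. Both directions of each identity are established exactly as in the paper: the lower bound $a\le y\thicksim b$ (resp.\ $c\le d\backsim y$) comes from Proposition~\ref{ps-eq-10}(2), and the upper bound comes from the monotonicity encoded in~$(A_5)$ together with the fact $x\le b$ (resp.\ $x\le d$) supplied by Proposition~\ref{ps-eq-20}(3).

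There is a minor difference in how the upper bound is obtained. The paper routes the argument through Proposition~\ref{ps-eq-20-10}(1): from $x\le b$ one deduces $b\wedge y\thicksim b\le x\wedge y\thicksim x$, and then invokes Proposition~\ref{ps-eq-20}(4) (namely $y\le b$) to collapse $b\wedge y$ to $y$. You instead apply axiom~$(A_5)$ directly, obtaining $y\thicksim b\le (y\wedge x)\thicksim(b\wedge x)$ and collapsing $b\wedge x$ to $x$ using only $x\le b$. Your route is slightly leaner: it avoids the intermediate Proposition~\ref{ps-eq-20-10} and does not need the companion bound $y\le b$ from Proposition~\ref{ps-eq-20}(4). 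Since Proposition~\ref{ps-eq-20-10}(1) is itself a thin wrapper around~$(A_5)$, the two arguments are essentially the same in content, with yours being marginally more direct.
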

\begin{proof}
$(1)$ By Proposition \ref{ps-eq-20}$(3)$, $x\le (x\wedge y\thicksim x)\backsim y$. 
Taking $z:=y$ and $y:=(x\wedge y\thicksim x)\backsim y$ in Proposition \ref{ps-eq-20-10}$(1)$ we get 
$((x\wedge y\thicksim x)\backsim y)\wedge y\thicksim ((x\wedge y\thicksim x)\backsim y)\le x\wedge y\thicksim x$. 
Since by Proposition \ref{ps-eq-20}$(4)$, $y\le (x\wedge y\thicksim x)\backsim y$, it follows that 
$y\thicksim ((x\wedge y\thicksim x)\backsim y)\le x\wedge y\thicksim x$. \\
On the other hand, by Proposition \ref{ps-eq-10}$(2)$ 
$x\wedge y\thicksim x\le y\thicksim ((x\wedge y\thicksim x)\backsim y)$, hence  
$y\thicksim ((x\wedge y\thicksim x)\backsim y)=x\wedge y\thicksim x$. \\
$(2)$ Similarly from $x\le y\thicksim (x\backsim x\wedge y)$, applying Propositions \ref{ps-eq-20-10}$(1)$ and 
\ref{ps-eq-20}$(4)$ we get $(y\thicksim (x\backsim x\wedge y))\backsim y\le x\backsim x\wedge y$.
By Proposition \ref{ps-eq-10}$(2)$, $x\backsim x\wedge y\le (y\thicksim (x\backsim x\wedge y))\backsim y$, so  
$(y\thicksim (x\backsim x\wedge y))\backsim y=x\backsim x\wedge y$. 
\end{proof}


Pseudo BCK-algebras were introduced by G. Georgescu and A. Iorgulescu in \cite{Geo15} as algebras 
with "two differences", a left- and right-difference, instead of one $*$ and with a constant element $0$ as the least element. Nowadays pseudo BCK-algebras are used in a dual form, with two implications, $\to$ and $\rightsquigarrow$ and with one constant element $1$, that is the greatest element. Thus such pseudo BCK-algebras are in the "negative cone" and are also called "left-ones". \\
A \emph{pseudo BCK-algebra} (more precisely, \emph{reversed left-pseudo BCK-algebra}) is a structure 
${\mathcal B}=(B,\leq,\rightarrow,\rightsquigarrow,1)$ where $\leq$ is a binary relation on $B$, $\rightarrow$ and $\rightsquigarrow$ are binary operations on $B$ and $1$ is an element of $B$ satisfying, for 
all $x, y, z \in B$, the  axioms:\\
$(B_1)$ $x \rightarrow y \leq (y \rightarrow z) \rightsquigarrow (x \rightarrow z)$, $\:\:\:$ 
            $x \rightsquigarrow y \leq (y \rightsquigarrow z) \rightarrow (x \rightsquigarrow z);$ \\ 
$(B_2)$ $x \leq (x \rightarrow y) \rightsquigarrow y$,$\:\:\:$ $x \leq (x \rightsquigarrow y) \rightarrow y;$ \\
$(B_3)$ $x \leq x;$ \\
$(B_4)$ $x \leq 1;$ \\
$(B_5)$ if $x \leq y$ and $y \leq x$, then $x = y;$ \\
$(B_6)$ $x \leq y$ iff $x \rightarrow y = 1$ iff $x \rightsquigarrow y = 1$. \\

Since the partial order $\leq$ is determined by either of the two ``arrows", we can eliminate $\leq$ from the signature 
and denote a pseudo BCK-algebra by ${\mathcal B}=(X,\rightarrow,\rightsquigarrow,1)$. \\
An equivalent definition of a pseudo BCK-algebra is given in \cite{Kuhr5}. \\
The structure ${\mathcal B}=(B,\rightarrow,\rightsquigarrow,1)$ of the type $(2,2,0)$ is a pseudo BCK-algebra iff it satisfies the following identities and quasi-identity, for all $x, y, z \in B$:\\ 
$(B_1')$ $(x \rightarrow y) \rightsquigarrow [(y \rightarrow z) \rightsquigarrow (x \rightarrow z)]=1;$ \\
$(B_2')$ $(x \rightsquigarrow y) \rightarrow [(y \rightsquigarrow z) \rightarrow (x \rightsquigarrow z)]=1;$ \\
$(B_3')$ $1 \rightarrow x = x;$ \\
$(B_4')$ $1 \rightsquigarrow x = x;$ \\
$(B_5')$ $x \rightarrow 1 = 1;$ \\
$(B_6')$ $(x\rightarrow y =1$ and $y \rightarrow x=1)$ implies $x=y$. \\
The partial order $\le$ is defined by $x \le y$ iff $x \rightarrow y =1$ (iff $x \rightsquigarrow y =1$). \\ 
If the poset $(B, \le)$ is a meet-semilattice, then ${\mathcal B}$ is called a \emph{pseudo BCK-meet-semilattice} and 
we denote it by $\mathcal{B}=(B, \wedge, \rightarrow, \rightsquigarrow, 1)$.       
If $(B,\leq)$ is a lattice, then we will say that ${\mathcal B}$ is a \emph{pseudo BCK-lattice} and 
it is denoted by $\mathcal{B}=(B, \wedge, \vee, \rightarrow, \rightsquigarrow, 1)$. \\

A pseudo BCK-algebra $\mathcal{B}=(B,\rightarrow,\rightsquigarrow,1)$ with a constant $a\in B$ (which 
can denote any element) is called a \emph{pointed pseudo BCK-algebra}.\\
A pointed pseudo BCK-algebra is denoted by $\mathcal{B}=(B,\rightarrow,\rightsquigarrow,a,1)$. 

A pseudo BCK-algebra $B$ is called \emph{bounded} if there exists an element $0\in B$ such that $0\le x$ 
for all $x\in B$. In a bounded pseudo BCK-algebra $(B, \rightarrow, \rightsquigarrow, 0, 1)$ we can define 
two negations: $x^{\rightarrow_0}=x\rightarrow 0$ and $x^{\rightsquigarrow_0}=x\rightsquigarrow 0$. 
A bounded pseudo BCK-algebra $B$ is called \emph{good} if it satisfies the identity 
$x^{\rightarrow_0\rightsquigarrow_0}=x^{\rightsquigarrow_0\rightarrow_0}$ for all $x\in B$. 

\begin{lemma} \label{ps-eq-40} $\rm($\cite{Geo15}$\rm)$ In any pseudo BCK-algebra 
$(B,\rightarrow,\rightsquigarrow,1)$ the following hold for all $x, y, z\in B$: \\
$(1)$ $x\le y$ implies $z\rightarrow x \le z\rightarrow y$ and $z\rightsquigarrow x \le z\rightsquigarrow y;$ \\
$(2)$ $x\le y$ implies $y\rightarrow z \le x\rightarrow z$ and $y\rightsquigarrow z \le x\rightsquigarrow z;$ \\
$(3)$ $x\rightarrow y \le (z\rightarrow x)\rightarrow (z\rightarrow y)$ and 
      $x\rightsquigarrow y \le (z\rightsquigarrow x)\rightsquigarrow (z\rightsquigarrow y);$ \\
$(4)$ $x\rightarrow (y\rightsquigarrow z)=y\rightsquigarrow (x\rightarrow z)$ and 
      $x\rightsquigarrow (y\rightarrow z)=y\rightarrow (x\rightsquigarrow z);$ \\
$(5)$ $x\le y\rightarrow x$ and $x\le y\rightsquigarrow x$.       
\end{lemma}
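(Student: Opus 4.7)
The plan is to derive the five items from the quasi-identity form of the axioms $(B_1')$--$(B_6')$, using throughout the order characterization $x\le y\iff x\rightarrow y=1\iff x\rightsquigarrow y=1$ coming from $(B_6)$. The monotonicity statements in (2) are the quickest: substituting $x\rightarrow y=1$ into $(B_1')$ gives $1\rightsquigarrow[(y\rightarrow z)\rightsquigarrow(x\rightarrow z)]=1$, which collapses via $(B_4')$ to $y\rightarrow z\le x\rightarrow z$; the $\rightsquigarrow$-version is obtained by the same manipulation starting from $(B_2')$ with the hypothesis $x\rightsquigarrow y=1$. For (5), I would substitute $y:=1$ into $(B_1')$ and $(B_2')$ and simplify using $(B_3'), (B_4'), (B_5')$: each collapses directly to $x\le y\rightarrow x$, respectively $x\le y\rightsquigarrow x$.

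The real obstacle is the exchange identity (4), which I would tackle before (1) and (3), both of which I plan to derive from it. To prove $x\rightarrow(y\rightsquigarrow z)\le y\rightsquigarrow(x\rightarrow z)$ I intend to combine two instances of $(B_2)$, namely $x\le [x\rightarrow(y\rightsquigarrow z)]\rightsquigarrow(y\rightsquigarrow z)$ and $y\le (y\rightsquigarrow z)\rightarrow z$, with a careful application of $(B_1)$ and $(B_1')$ that uses only facts already available from the axioms (so as to avoid any appeal to the not-yet-proved monotonicity (1)); the reverse inequality is handled by the mirror chain, and antisymmetry $(B_5)$ yields equality. The companion identity $x\rightsquigarrow(y\rightarrow z)=y\rightarrow(x\rightsquigarrow z)$ is obtained by the same argument with the roles of $\rightarrow$ and $\rightsquigarrow$ swapped. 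This is the step I expect to require the most care, precisely because the two natural shortcuts (monotonicity of each implication in its second argument and a residuation adjunction) are not yet available at this point.

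With (4) in hand, the remaining items follow cleanly. For (3), the axiom $(B_1)$ gives $z\rightarrow x\le (x\rightarrow y)\rightsquigarrow(z\rightarrow y)$, which by $(B_6)$ is the identity $(z\rightarrow x)\rightarrow[(x\rightarrow y)\rightsquigarrow(z\rightarrow y)]=1$; a single application of (4) swaps the outer $\rightarrow$ and inner $\rightsquigarrow$ to produce $(x\rightarrow y)\rightsquigarrow[(z\rightarrow x)\rightarrow(z\rightarrow y)]=1$, which is exactly $x\rightarrow y\le(z\rightarrow x)\rightarrow(z\rightarrow y)$; the $\rightsquigarrow$-version is symmetric, starting from the second inequality in $(B_1)$. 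Finally, (1) is immediate from (3): if $x\le y$ then $x\rightarrow y=1$, so by (3) one has $1\le(z\rightarrow x)\rightarrow(z\rightarrow y)$, which together with the bound $(z\rightarrow x)\rightarrow(z\rightarrow y)\le 1$ from $(B_4)$ and antisymmetry $(B_5)$ forces $(z\rightarrow x)\rightarrow(z\rightarrow y)=1$, i.e.\ $z\rightarrow x\le z\rightarrow y$; the $\rightsquigarrow$-version is dual.
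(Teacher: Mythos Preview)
The paper does not actually prove this lemma: it is quoted from \cite{Geo15} and stated without argument. So there is no ``paper's own proof'' to compare against; your proposal has to be judged on its own.

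Your plan is sound and essentially the standard derivation. Items (2) and (5) do fall out of $(B_1')$, $(B_2')$ together with $(B_3')$--$(B_5')$ exactly as you describe. For the exchange law~(4), your intended route works: from $(B_1')$ with $b:=y\rightsquigarrow z$ and $c:=z$ one gets
\[
x\rightarrow(y\rightsquigarrow z)\ \le\ \big((y\rightsquigarrow z)\rightarrow z\big)\rightsquigarrow(x\rightarrow z),
\]
and then $(B_2)$ gives $y\le (y\rightsquigarrow z)\rightarrow z$, so the already-established antitonicity~(2) (not~(1)) yields $x\rightarrow(y\rightsquigarrow z)\le y\rightsquigarrow(x\rightarrow z)$; the reverse inequality is the mirror computation using $(B_2')$ and $(B_2)$. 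Note that only one instance of $(B_2)$ is really needed per direction, and that transitivity of $\le$ (which you use implicitly) is itself a one-line consequence of $(B_1')$ and $(B_4')$. Your derivations of (3) from $(B_1)$ via (4), and of (1) from (3), are correct as written.
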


For more details about the properties of a pseudo BCK-algebra we refer te reader to \cite{Ior14} and \cite{Ciu2}. \\
Let $B$ be a pseudo BCK-algebra. The subset $D \subseteq B$ is called a \emph{deductive system} of $B$ if 
it satisfies the following conditions:\\
$(i)$ $1 \in D;$ \\
$(ii)$ for all $x, y \in B$, if $x, x \rightarrow y \in D$, then $y \in D$. \\ 
Condition $(ii)$ is equivalent to the condition: \\
$(ii^{\prime})$ for all $x, y \in B$, if $x, x \rightsquigarrow y \in D$, then $y \in D$. \\ 

A deductive system $D$ of a pseudo BCK-algebra $B$ is said to be \emph{normal} if it satisfies the condition:\\
$(iii)$ for all $x, y \in B$, $x \rightarrow y \in D$ iff $x \rightsquigarrow y \in D$. \\
We will denote by ${\mathcal DS}_{BCK}(B)$ the set of all deductive systems and by ${\mathcal DS}_{n_{BCK}}(B)$ 
the set of all normal deductive systems of a pseudo BCK-algebra $B$. \\
Obviously $\{1\}, B \in {\mathcal DS}_{BCK}(B), {\mathcal DS}_{n_{BCK}}(B)$ and 
${\mathcal DS}_{n_{BCK}}(B)\subseteq {\mathcal DS}_{BCK}(B)$.
For every subset $X\subseteq B$, the smallest deductive system of $B$ containing $X$ (i.e. the intersection of all deductive systems $D\in{\mathcal DS}_{BCK}(B)$ such that $X\subseteq D$) is called the deductive 
system \emph{generated by} $X$ and it will be denoted by $[X)$. If $X=\{x\}$ we write $[x)$ instead of $[\{x\})$.

\begin{Def} \label{ps-eq-60}
A pseudo BCK-meet-semilattice with the \emph{$\rm($pD$\rm)$ condition} (i.e. with the  \emph{pseudo-distributivity} condition) or a \emph{pseudo BCK$\rm($pD$\rm)$-meet-semilattice} for short, is a pseudo BCK-meet-semilattice 
$(X, \wedge, \rightarrow, \rightsquigarrow, 1)$ satisfying the (pD) condition:\\
(pD) $\hspace*{2.1cm}$   $x\rightarrow (y\wedge z)=(x\rightarrow y)\wedge (x\rightarrow z)$, \\
     $\hspace*{3cm}$ $x\rightsquigarrow (y\wedge z)=(x\rightsquigarrow y)\wedge (x\rightsquigarrow z)$ \\ 
for all $x, y, z\in X$. 
\end{Def}

A pseudo BCK-algebra with the \emph{$\rm($pP$\rm)$ condition} (i.e. with the  \emph{pseudo-product} condition) or 
a \emph{pseudo BCK$\rm($pP$\rm)$-algebra} for short, is a pseudo BCK-algebra 
$(X, \leq,\rightarrow, \rightsquigarrow, 1)$ satisfying the (pP) condition:\\
(pP) For all $x, y \in X$, there exists  \\
$\hspace*{3cm}$ $x\odot y=\min\{z \mid x \leq y \rightarrow z\}=\min\{z \mid y \leq x \rightsquigarrow z\}$. \\
A pseudo BCK(pP)-algebra is denoted by $(X, \odot, \rightarrow, \rightsquigarrow, 1)$. \\
It was proved in \cite{Ior1} that the (pP) condition is equivalent to the \emph{pseudo-residuation property} 
((pRP) for short):\\
(pRP) For all $x, y, z$ the following hold:\\
$\hspace*{3cm}$ $x\odot y\le z$ iff $x\le y\rightarrow z$ iff $y\le x\rightsquigarrow z$.  

Every pseudo BCK(pP)-meet-semilattice $(X, \wedge, \odot, \rightarrow, \rightsquigarrow, 1)$ is a pseudo BCK(pD)-meet-semilattice (see \cite{Ciu1}).  

\begin{lemma} \label{ps-eq-70-10} Every deductive system of a pseudo BCK(pD)-meet-semilattice $B$ is a 
subalgebra of $B$.
\end{lemma}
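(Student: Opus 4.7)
The plan is to verify that an arbitrary deductive system $D$ of a pseudo BCK(pD)-meet-semilattice $B$ contains $1$ and is closed under each of the three binary operations $\wedge$, $\rightarrow$, $\rightsquigarrow$. That $1\in D$ is built into the definition of a deductive system, so the work consists of the three closure properties.

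First I would handle the closure under the two implications, since the meet step will use them. Take $x, y\in D$. By Lemma \ref{ps-eq-40}$(5)$ we have $y\le x\rightarrow y$ and $y\le x\rightsquigarrow y$, so by $(B_6')$
\[
y \rightarrow (x\rightarrow y) = 1 \in D \qquad \text{and} \qquad y \rightsquigarrow (x\rightsquigarrow y) = 1 \in D.
\]
Since $y\in D$, the modus ponens condition $(ii)$ (respectively $(ii')$) yields $x\rightarrow y\in D$ and $x\rightsquigarrow y\in D$. Thus $D$ is closed under both implications.

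Next I would verify closure under $\wedge$ using the pseudo-distributivity axiom (pD). For $x, y\in D$, applying (pD) together with $x\rightarrow x = 1$ gives
\[
x \rightarrow (x\wedge y) = (x\rightarrow x)\wedge (x\rightarrow y) = 1\wedge (x\rightarrow y) = x\rightarrow y.
\]
By the previous paragraph $x\rightarrow y\in D$, so $x\rightarrow (x\wedge y)\in D$. Since also $x\in D$, condition $(ii)$ gives $x\wedge y\in D$, completing the proof.

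I do not anticipate a real obstacle here: the only nontrivial input is recognizing that the correct instance of (pD) to invoke is $x\rightarrow (x\wedge y)=(x\rightarrow x)\wedge(x\rightarrow y)$, which collapses to $x\rightarrow y$ precisely because $x\rightarrow x=1$; everything else is a direct application of modus ponens. Without the (pD) hypothesis this step would fail, which is exactly why the lemma is stated in the pseudo-distributive setting rather than for arbitrary pseudo BCK-meet-semilattices.
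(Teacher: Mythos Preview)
Your proof is correct and follows essentially the same route as the paper's: closure under $\rightarrow$ and $\rightsquigarrow$ via $y\le x\rightarrow y$, $y\le x\rightsquigarrow y$, and then closure under $\wedge$ via the (pD) identity $x\rightarrow(x\wedge y)=(x\rightarrow x)\wedge(x\rightarrow y)=x\rightarrow y$. One small labeling slip: the implication ``$y\le x\rightarrow y$ implies $y\rightarrow(x\rightarrow y)=1$'' is axiom $(B_6)$ (or the sentence defining the partial order just after $(B_6')$), not $(B_6')$, which is the antisymmetry-type condition.
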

\begin{proof} Let $(B,\wedge,\rightarrow,\rightsquigarrow,1)$ be a pseudo BCK(pD)-meet-semilattice,  
$D\in{\mathcal DS}_{BCK}(B)$ and $x, y\in D$. 
From $y\le x\rightarrow y,x\rightsquigarrow y$, we get $x\rightarrow y,x\rightsquigarrow y\in D$. 
Applying the (pD) property, we have $x\rightarrow x\wedge y=x\rightarrow y\in D$. Since $x\in D$, it follows that 
$x\wedge y\in D$. Moreover, $1\in D$, thus $D$ is a subalgebra of $B$.
\end{proof}

Pseudo-hoops were introduced in \cite{Geo16} as a generalization of hoops which were originally defined and studied   by Bosbach in \cite{Bos1} and \cite{Bos2} under the name of ``residuated integral monoids". 
A \emph{pseudo-hoop} is an algebra $(A, \odot, \rightarrow, \rightsquigarrow, 1)$ of the type $(2,2,2,0)$ such that, for all $x,y,z \in A$:\\
$(PH_1)$ $x\odot 1=1\odot x=x;$\\
$(PH_2)$ $x\rightarrow x=x\rightsquigarrow x=1;$\\
$(PH_3)$ $(x\odot y) \rightarrow z=x\rightarrow (y\rightarrow z);$\\
$(PH_4)$ $(x\odot y) \rightsquigarrow z=y\rightsquigarrow (x\rightsquigarrow z);$\\
$(PH_5)$ $(x\rightarrow y)\odot x=(y\rightarrow x)\odot y=x\odot(x\rightsquigarrow y)=y\odot(y\rightsquigarrow x)$. 

A pseudo-hoop $A$ is bounded if there exists an element $0\in A$ such that $0\le x$ for all $x\in A$. \\
It was proved that a pseudo-hoop has the pseudo-divisibility condition and it is a meet-semilattice with 
$x\wedge y=(x\rightarrow y)\odot x=(y\rightarrow x)\odot y=x\odot(x\rightsquigarrow y)=y\odot(y\rightsquigarrow x)$.
It follows that a bounded R$\ell$-monoid can be viewed as a bounded pseudo-hoop together with the join-semilattice property. In other words, a pseudo-hoop is a meet-semilattice ordered residuated, integral and divisible monoid. 

\begin{rems} \label{ps-eq-80} $\rm($\cite{{Ciu2}}$\rm)$
$(1)$ Every linearly ordered pseudo BCK-algebra is a pseudo BCK(pP)-algebra. 
Thus every linearly ordered pseudo BCK-meet-semilattice is a pseudo BCK(pD)-meet-semilattice. \\
$(2)$ Since every pseudo-hoop $A$ is a pseudo BCK(pP)-algebra, a partial order $\le$ on $A$ can be 
defined in the same way as in the case of pseudo BCK-algebras.
Moreover, $(A,\le)$ is a meet-semilattice with 
$x\wedge y=(x\rightarrow y)\odot x=(y\rightarrow x)\odot y=x\odot(x\rightsquigarrow y)=y\odot(y\rightsquigarrow x)$. \\
It follows that every pseudo-hoop is a pseudo BCK(pD)-meet-semilattice.
\end{rems}

\begin{Def} \label{ps-eq-90}
A pseudo BCK-meet-semilattice with the \emph{$\rm($pC$\rm)$ condition} (i.e. with the  \emph{pseudo-contraction} condition) or a \emph{pseudo BCK$\rm($pC$\rm)$-meet-semilattice} for short, is a pseudo BCK-meet-semilattice 
$(X, \wedge, \rightarrow, \rightsquigarrow, 1)$ satisfying the (pC) condition:\\
(pC) $\hspace*{2.1cm}$   $x\rightarrow y\le (x\wedge z)\rightarrow (y\wedge z)$, \\
     $\hspace*{3cm}$ $x\rightsquigarrow y\le (x\wedge z)\rightsquigarrow (y\wedge z)$ \\ 
for all $x, y, z\in X$. 
\end{Def}

\begin{prop} \label{ps-eq-100} Any pseudo BCK(pD)-meet-semilattice is a pseudo BCK(pC)-meet-semilattice.
\end{prop}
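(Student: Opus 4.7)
The plan is to unwrap the right-hand side of (pC) using the pseudo-distributivity axiom, which reduces it to a simpler expression that is easily comparable to the left-hand side via monotonicity.

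First I would fix $x, y, z \in X$ and observe two trivial order facts in the meet-semilattice: $x \wedge z \le x$ and $x \wedge z \le z$. From Lemma \ref{ps-eq-40}(2) applied to the inequality $x \wedge z \le x$, I obtain $x \rightarrow y \le (x \wedge z) \rightarrow y$, and likewise $x \rightsquigarrow y \le (x \wedge z) \rightsquigarrow y$. These will serve as the ``left'' halves of the targeted estimates.

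Next I would use the axiom $(B_6')$ together with $x \wedge z \le z$ to conclude $(x \wedge z) \rightarrow z = 1$ and $(x \wedge z) \rightsquigarrow z = 1$. Plugging this into the (pD) identity with first argument $x \wedge z$ gives
\[
(x \wedge z) \rightarrow (y \wedge z) = \bigl((x \wedge z) \rightarrow y\bigr) \wedge \bigl((x \wedge z) \rightarrow z\bigr) = (x \wedge z) \rightarrow y,
\]
and analogously $(x \wedge z) \rightsquigarrow (y \wedge z) = (x \wedge z) \rightsquigarrow y$. Chaining these equalities with the inequalities obtained in the previous step yields (pC) for both implications.

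There is really no obstacle here: the proof is a two-line computation once one realises that (pD) makes the meet $(x\wedge z)\to y$ absorb the tautological factor $(x\wedge z)\to z = 1$. The only thing to be careful about is invoking Lemma \ref{ps-eq-40}(2) with the correct direction of antitonicity in the first argument of the implications, rather than the monotonicity in the second argument.
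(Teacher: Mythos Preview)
Your proof is correct and follows essentially the same approach as the paper: both use antitonicity in the first argument to get $x\rightarrow y\le (x\wedge z)\rightarrow y$ and then invoke (pD) together with a tautological factor equal to $1$ to identify $(x\wedge z)\rightarrow y$ with $(x\wedge z)\rightarrow(y\wedge z)$. Your computation is in fact slightly more direct than the paper's (which takes a detour through $(z\wedge x)\rightarrow(z\wedge x\wedge y)$); the only quibble is that the fact $(x\wedge z)\rightarrow z=1$ comes from the definition of the partial order (equivalently axiom $(B_6)$), not from the quasi-identity $(B_6')$.
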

\begin{proof}
Let $(X, \wedge, \rightarrow, \rightsquigarrow, 1)$ be a pseudo BCK(pD)-meet-semilattice. 
From $z\wedge x\le x $ we get: \\
$\hspace*{1.0cm}$
$x\rightarrow y\le (z\wedge x)\rightarrow y$ \\
$\hspace*{2.2cm}$
$=((z\wedge x)\rightarrow (z\wedge x))\wedge ((z\wedge x) \rightarrow y)$ \\
$\hspace*{2.2cm}$
$=(z\wedge x)\rightarrow (z\wedge x\wedge y)$ \\
$\hspace*{2.2cm}$
$=(z\wedge x)\rightarrow ((z\wedge x)\wedge (z\wedge y))$ \\
$\hspace*{2.2cm}$
$=((z\wedge x)\rightarrow (z\wedge x)) \wedge ((z\wedge x)\rightarrow (z\wedge y))$ \\
$\hspace*{2.2cm}$ 
$=(z\wedge x)\rightarrow (z\wedge y)$. \\
Similarly $x\rightsquigarrow y\le (x\wedge z)\rightsquigarrow (y\wedge z)$. 
Thus $X$ satisfies the (pC) condition. 
\end{proof}

\begin{rem} \label{ps-eq-100-10} $\rm($\cite{Dvu7}$\rm)$ If $(A, \wedge, \rightarrow, \rightsquigarrow, 1)$ is a 
pseudo BCK(pC)-meet-semilattice, then $x\rightarrow x\wedge y=x\rightarrow y$ and 
$x\rightsquigarrow x\wedge y=x\rightsquigarrow y$.
\end{rem}

The following theorem provides a connection of pseudo equality algebras with the class of 
pseudo BCK(pC)-meet-semilattices. 
 
\begin{theo} \label{ps-eq-110} $\rm($\cite{Dvu7}$\rm)$ The following statements hold: \\
$(1)$  Let $\mathcal{A}=(A, \wedge, \thicksim, \backsim, 1)$ be a pseudo equality algebra.  
Then $\Psi(\mathcal{A})=(A, \wedge, \rightarrow, \rightsquigarrow, 1)$ is a pseudo BCK(pC)-meet-semilattice, where $x\rightarrow y=x\wedge y\thicksim x$ and $x\rightsquigarrow y=x\backsim x\wedge y$ for all $x, y\in A;$ \\ 
$(2)$ Let $\mathcal{B}=(B, \wedge, \rightarrow, \rightsquigarrow, 1)$ be a pseudo BCK(pC)-meet-semilattice. 
Then $\Phi(\mathcal{B})=(B, \wedge, \thicksim, \backsim, 1)$ is a pseudo equality algebra, where 
$x\thicksim y=y\rightarrow x$ and $x\backsim y=x\rightsquigarrow y$ for all $x, y\in B$.             
\end{theo}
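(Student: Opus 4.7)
The plan is to verify each of the two directions axiom-by-axiom, using the properties already collected in Proposition \ref{ps-eq-10} and Lemma \ref{ps-eq-40}. Both constructions are essentially a bookkeeping exercise: one rewrites each axiom of the target structure under the given definitions and recognises it as an instance of a statement already at hand. No genuine computation is needed beyond tracking the correct substitution of variables.

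For part (1), the hypothesis $(A_1)$ directly gives that $(A,\wedge,1)$ is a meet-semilattice with top element $1$, so it suffices to verify the pseudo BCK axioms $(B_1')$--$(B_6')$ and the (pC) condition for $\Psi(\mathcal{A})$. The identities $(B_3')$, $(B_4')$, $(B_5')$ and $x\rightarrow x = x\rightsquigarrow x = 1$ are all collected in Proposition \ref{ps-eq-10}(6). The quasi-identity $(B_6')$ follows from Proposition \ref{ps-eq-10}(5), which also ensures that the partial order recovered from $\rightarrow, \rightsquigarrow$ coincides with the original order on $A$, so the meet in $\Psi(\mathcal{A})$ really is the meet of $\mathcal{A}$. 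The identities $(B_1')$ and $(B_2')$ are immediate from Proposition \ref{ps-eq-10}(9) combined with $(B_6')$. Finally, the (pC) condition is precisely Proposition \ref{ps-eq-10}(12).

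For part (2), under the definitions $x\thicksim y = y\rightarrow x$ and $x\backsim y = x\rightsquigarrow y$ I would check $(A_1)$--$(A_7)$ in turn: $(A_1)$ holds by hypothesis; $(A_2)$ reduces to $x\rightarrow x = x\rightsquigarrow x = 1$; $(A_3)$ is $(B_3')$ and $(B_4')$; $(A_4)$ splits into four monotonicity statements, each handled by Lemma \ref{ps-eq-40}(1) or (2); $(A_5)$ unfolds to $y\rightarrow x \le (y\wedge z)\rightarrow (x\wedge z)$ and $x\rightsquigarrow y \le (x\wedge z)\rightsquigarrow (y\wedge z)$, which are the two halves of the (pC) condition; $(A_6)$ unfolds to the pseudo BCK axiom $(B_1)$; and $(A_7)$ unfolds to Lemma \ref{ps-eq-40}(3).

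The only genuine subtlety, and essentially the sole obstacle, is that the two definitions are asymmetric: $x\thicksim y = y\rightarrow x$ interchanges arguments, whereas $x\backsim y = x\rightsquigarrow y$ does not. Consequently, when translating the ``$\thicksim$'' half of axioms $(A_5)$--$(A_7)$ the variables in the resulting pseudo BCK inequality appear in the opposite order from the ``$\backsim$'' half, so one must apply Lemma \ref{ps-eq-40} or the (pC) condition with the appropriate relabelling. For example, $(A_7)$ in its $\thicksim$-form yields $y\rightarrow x \le (z\rightarrow y)\rightarrow (z\rightarrow x)$, which is Lemma \ref{ps-eq-40}(3) with $x$ and $y$ interchanged, while the $\backsim$-form is that lemma applied verbatim. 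Once the unfolding is carried out carefully in both directions, nothing beyond the cited propositions and lemmas is required.
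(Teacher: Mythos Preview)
The paper does not supply its own proof of this theorem: it is simply cited from \cite{Dvu7}, so there is no proof in the paper to compare against. That said, your proposal is correct and is exactly the kind of direct axiom-by-axiom verification one would expect; each step you cite does indeed yield the corresponding axiom after the substitutions you describe, and your remark about the asymmetry in the definitions of $\thicksim$ and $\backsim$ correctly identifies the only place where one must be careful with variable relabelling.
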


\begin{prop} \label{ps-eq-120} If $(A, \wedge, \rightarrow, \rightsquigarrow, 1)$ is a non-trivial 
pseudo BCK(pC)-meet-semilattice, then $\Phi(A)$ is not an equality algebra.
\end{prop}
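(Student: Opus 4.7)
The plan is to show that in $\Phi(A)$ the two equality operations $\thicksim$ and $\backsim$ genuinely differ, whereas an equality algebra (in the sense of Jenei, recalled in the introduction) is equipped with a single equivalence operation. Hence exhibiting one pair $x,y\in A$ with $x\thicksim y\neq x\backsim y$ suffices.

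The natural choice is to test what happens when one argument is $1$. By the definition of $\Phi(\mathcal{B})$ in Theorem \ref{ps-eq-110}$(2)$,
\[
x\thicksim 1 = 1\rightarrow x \quad\text{and}\quad x\backsim 1 = x\rightsquigarrow 1.
\]
Using the pseudo BCK-identities $(B_3')$ and $(B_5')$, I would compute $1\rightarrow x = x$ and $x\rightsquigarrow 1 = 1$. So $x\thicksim 1 = x$ while $x\backsim 1 = 1$.

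Now I invoke non-triviality of $A$: there exists some $x\in A$ with $x\neq 1$. For that $x$, we get $x\thicksim 1 = x \neq 1 = x\backsim 1$, proving that the operations $\thicksim$ and $\backsim$ on $\Phi(A)$ are distinct. Consequently $\Phi(A)$ cannot be collapsed to a structure with a single equivalence operation, i.e.\ it is not an equality algebra.

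There is no genuine obstacle in this argument; the only subtlety is that one must be clear about what ``equality algebra'' means in this context, namely an algebra with a single equivalence operation as recalled from \cite{Jen2} in the introduction. Once this is pinned down, the proof reduces to the two-line computation above using only the defining BCK-identities.
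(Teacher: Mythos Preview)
Your proof is correct and takes essentially the same approach as the paper: both arguments test the pair $(x,1)$ and use $1\rightarrow x=x$ together with $x\rightsquigarrow 1=1$ to separate $\thicksim$ from $\backsim$ when $x\neq 1$. The only cosmetic differences are that the paper phrases it as a proof by contradiction, and that the identity $x\rightsquigarrow 1=1$ is strictly the $\rightsquigarrow$-analogue of $(B_5')$ (it follows from $x\le 1$ and $(B_6)$) rather than $(B_5')$ itself.
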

\begin{proof} 
Let $(A, \wedge, \rightarrow, \rightsquigarrow, 1)$ be a non-trivial pseudo BCK(pC)-meet-semilattice and 
$\Phi(A)=(A, \wedge, \thicksim, \backsim, 1)$ be its corresponding 
pseudo equality algebra. Suppose that $\Phi(A)$ is an equality algebra, that is 
$x\thicksim y=x\backsim y$ for all $x, y\in A$. 
It follows that $x\thicksim y=y\rightarrow x=x\rightsquigarrow y$ for all $x, y\in A$. 
Taking $y=1$ we get $x=1$, so $A$ is a trivial pseudo BCK(pC)-meet-semilattice. 
Hence $\Phi(A)$ is not an equality algebra.  
\end{proof}

For the case of equality algebras, conditions (pD), (pC) become (D) and (C), respectively. \\
The next result is the commutative version of Theorem \ref{ps-eq-110}. 

\begin{theo} \label{ps-eq-130} The following statements hold: \\
$(1)$  Let $\mathcal{A}=(A, \wedge, \thicksim, 1)$ be an equality algebra.  
Then $\Psi(\mathcal{A})=(A, \wedge, \rightarrow, 1)$ is a BCK(C)-meet-semilattice, where 
$x\rightarrow y=x\wedge y\thicksim x$ for all $x, y\in A;$ \\ 
$(2)$ Let $\mathcal{B}=(B, \wedge, \rightarrow, 1)$ be a BCK(C)-meet-semilattice. 
Then $\Phi(\mathcal{B})=(B, \wedge, \thicksim, 1)$ is an equality algebra, where 
$x\thicksim y=y\rightarrow x$ for all $x, y\in B$.             
\end{theo}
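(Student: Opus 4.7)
The plan is to obtain Theorem \ref{ps-eq-130} as the commutative specialization of Theorem \ref{ps-eq-110}. The key observation is that an equality algebra $(A,\wedge,\thicksim,1)$ is exactly a pseudo equality algebra in which $\backsim$ is identified with $\thicksim$, and analogously a BCK(C)-meet-semilattice $(B,\wedge,\rightarrow,1)$ is exactly a pseudo BCK(pC)-meet-semilattice in which $\rightsquigarrow$ is identified with $\rightarrow$. Under these identifications the paired axioms in $(A_4)$--$(A_7)$ and the two (pC) inequalities each collapse to their single-operation counterparts, so the commutative axiom systems are precisely the specializations of the pseudo ones.

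For part (1), I would view $\mathcal{A}=(A,\wedge,\thicksim,1)$ as the pseudo equality algebra $(A,\wedge,\thicksim,\thicksim,1)$ and apply Theorem \ref{ps-eq-110}(1). This yields a pseudo BCK(pC)-meet-semilattice with $x\rightarrow y=x\wedge y\thicksim x$ and $x\rightsquigarrow y=x\thicksim x\wedge y$. The remaining step is to check that these two formulas define the same operation on $A$, so that $\Psi(\mathcal{A})=(A,\wedge,\rightarrow,1)$ is a BCK(C)-meet-semilattice with the single implication $x\rightarrow y=x\wedge y\thicksim x$.

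For part (2), I would view $\mathcal{B}=(B,\wedge,\rightarrow,1)$ as the pseudo BCK(pC)-meet-semilattice $(B,\wedge,\rightarrow,\rightarrow,1)$ and apply Theorem \ref{ps-eq-110}(2). This produces a pseudo equality algebra with $x\thicksim y=y\rightarrow x$ and $x\backsim y=x\rightsquigarrow y=x\rightarrow y$. The verification to be carried out is that these two formulas yield the same binary operation, so that $\Phi(\mathcal{B})=(B,\wedge,\thicksim,1)$ is genuinely an equality algebra with the single equality $x\thicksim y=y\rightarrow x$. The axioms $(A_2)$--$(A_7)$ in the single-operation form are then inherited directly from the corresponding axioms of the ambient pseudo equality algebra supplied by Theorem \ref{ps-eq-110}(2).

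The main obstacle in each direction is precisely this collapse step: the ``symmetry'' which is automatic in a single-operation theory must be tracked explicitly through the Jenei--K\'or\'odi/Dvure\v{c}enskij--Zahiri correspondence. Once the identification of $\rightarrow$ with $\rightsquigarrow$ (respectively $\thicksim$ with $\backsim$) is confirmed, the rest of the argument is a routine transcription of Theorem \ref{ps-eq-110} into the commutative signature, and no further structural verification is required.
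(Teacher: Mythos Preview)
The paper does not give an explicit proof of this result; it simply presents it as ``the commutative version of Theorem~\ref{ps-eq-110}'' and refers to \cite{Jen1}, \cite{Ciu1}. Your strategy of specializing Theorem~\ref{ps-eq-110} is therefore precisely the paper's implicit argument, and for part~(1) it works as you describe: the symmetry $x\thicksim y=y\thicksim x$ built into Jenei's equality algebras forces $x\rightarrow y=x\wedge y\thicksim x=x\thicksim x\wedge y=x\rightsquigarrow y$, so the two implications coincide and $\Psi(\mathcal{A})$ is a genuine BCK(C)-meet-semilattice.

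Your treatment of part~(2), however, contains a real gap. Applying Theorem~\ref{ps-eq-110}(2) to $(B,\wedge,\rightarrow,\rightarrow,1)$ gives $x\thicksim y=y\rightarrow x$ and $x\backsim y=x\rightarrow y$, and you propose to ``verify that these two formulas yield the same binary operation.'' They do not: for any $x<y$ one has $x\rightarrow y=1$ while $y\rightarrow x\neq 1$ in every non-trivial BCK-meet-semilattice. In fact the argument of Proposition~\ref{ps-eq-120} goes through verbatim with $\rightsquigarrow=\rightarrow$ and shows that $\Phi((B,\wedge,\rightarrow,\rightarrow,1))$ is \emph{never} an equality algebra (in the sense $\thicksim=\backsim$) unless $B$ is trivial. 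So the collapse you flag as ``the main obstacle'' is not a routine verification --- it is false.

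The upshot is that part~(2) cannot be obtained by literally specializing the $\Phi$ of Theorem~\ref{ps-eq-110}; the passage from a BCK(C)-meet-semilattice to an equality algebra requires a genuinely different construction (in \cite{Jen2}, \cite{Ciu1} one takes the symmetric formula $x\thicksim y=(x\rightarrow y)\wedge(y\rightarrow x)$), and the equality-algebra axioms must be checked directly for that definition rather than inherited from the pseudo case. The formula $x\thicksim y=y\rightarrow x$ written in the statement agrees with the biresiduum only on comparable pairs, so at minimum your proof should make explicit which definition of $\thicksim$ is being used and verify the symmetry axiom separately.
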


With the notations of Theorem \ref{ps-eq-110} we say that a pseudo equality algebra 
$(A, \wedge, \thicksim, \backsim, 1)$ is \emph{invariant} if there exists a pseudo BCK-meet-semilattice 
$(A, \wedge, \rightarrow^{\prime}, \rightsquigarrow^{\prime}, 1)$ such that 
$\Phi((A, \wedge, \rightarrow^{\prime}, \rightsquigarrow^{\prime}, 1))=(A, \wedge, \thicksim, \backsim, 1)$. 

\begin{theo} \label{ps-eq-140} $\rm($\cite{Dvu7}$\rm)$ The following statements hold: \\
$(1)$ Let $(A, \wedge, \thicksim, \backsim, 1)$ be a pseudo equality algebra. 
Then $\Psi(\Phi(\Psi((A, \wedge, \rightarrow, \rightsquigarrow, 1))))=
\Psi((A, \wedge, \rightarrow, \rightsquigarrow, 1));$ \\
$(2)$ Let $(B, \wedge, \rightarrow, \rightsquigarrow, 1)$ be a pseudo BCK(pC)-meet-semilattice. Then 
$\Psi(\Phi((B, \wedge, \rightarrow, \rightsquigarrow, 1)))=(B, \wedge, \rightarrow, \rightsquigarrow, 1);$ \\
$(3)$ A pseudo equality algebra $(A, \wedge, \thicksim, \backsim, 1)$ is invariant if and only if 
$\Phi(\Psi((A, \wedge, \thicksim, \backsim, 1)))=(A, \wedge, \thicksim, \backsim, 1);$ \\
$(4)$ The class of pseudo BCK(pC)-meet-semilattices and the class of invariant pseudo equality  
algebras are term equivalent; \\
$(5)$ The category of pseudo BCK(pC)-meet-semilattices and the category of invariant pseudo equality  
algebras are categorically equivalent.
\end{theo}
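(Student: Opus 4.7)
My plan is to prove part (2) first as the key computational identity, then derive (1) and (3) from it, and finally bootstrap (4) and (5) out of (2) and (3).

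For (2), starting from a pseudo BCK(pC)-meet-semilattice $(B,\wedge,\rightarrow,\rightsquigarrow,1)$, I apply $\Phi$ to obtain the pseudo equality algebra with $x\thicksim y=y\rightarrow x$ and $x\backsim y=x\rightsquigarrow y$, and then apply $\Psi$ to produce new implications
\[
x\rightarrow^{\prime} y=(x\wedge y)\thicksim x=x\rightarrow(x\wedge y),\qquad
x\rightsquigarrow^{\prime} y=x\backsim(x\wedge y)=x\rightsquigarrow(x\wedge y).
\]
By Remark \ref{ps-eq-100-10}, the (pC) hypothesis gives $x\rightarrow(x\wedge y)=x\rightarrow y$ and $x\rightsquigarrow(x\wedge y)=x\rightsquigarrow y$, so $\Psi(\Phi(B))=B$ as algebras (the meet and the constant $1$ are preserved by construction). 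Part (1) is then an immediate corollary: setting $B:=\Psi(A)$, which is pseudo BCK(pC)-meet-semilattice by Theorem \ref{ps-eq-110}(1), yields $\Psi(\Phi(\Psi(A)))=\Psi(A)$.

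For (3), the reverse implication is trivial: if $\Phi(\Psi(A))=A$, the pseudo BCK(pC)-meet-semilattice $\Psi(A)$ itself witnesses invariance. Conversely, if $\Phi(B^{\prime})=A$ for some pseudo BCK-meet-semilattice $B^{\prime}=(A,\wedge,\rightarrow^{\prime},\rightsquigarrow^{\prime},1)$, one first checks that $B^{\prime}$ must satisfy (pC) (this is forced by the fact that $\Phi(B^{\prime})$ is a pseudo equality algebra, so the axioms $(A_5)$--$(A_7)$ translate back into (pC) for $\rightarrow^{\prime},\rightsquigarrow^{\prime}$); then applying (2) to $B^{\prime}$ gives $\Psi(A)=\Psi(\Phi(B^{\prime}))=B^{\prime}$, whence $\Phi(\Psi(A))=\Phi(B^{\prime})=A$.

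Parts (4) and (5) are then formal consequences. Term equivalence amounts to verifying mutual interpretability: $\thicksim,\backsim$ are explicit terms in $\rightarrow,\rightsquigarrow,\wedge$ on the pseudo BCK(pC)-meet-semilattice side (namely $x\thicksim y=y\rightarrow x$, $x\backsim y=x\rightsquigarrow y$), while $\rightarrow,\rightsquigarrow$ are explicit terms in $\thicksim,\backsim,\wedge$ on the invariant pseudo equality algebra side (namely $x\rightarrow y=(x\wedge y)\thicksim x$, $x\rightsquigarrow y=x\backsim(x\wedge y)$), and parts (2)--(3) confirm that these translations are mutually inverse on the respective classes. Categorical equivalence then follows in the standard way: the functors $\Psi,\Phi$ act as the identity on underlying sets, and any map preserving one signature automatically preserves the term-defined operations of the other, so morphisms correspond; parts (1)--(3) supply the natural isomorphisms $\Psi\Phi\cong\text{id}$ and $\Phi\Psi\cong\text{id}$ on the two categories. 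The main obstacle I anticipate is precisely the subtlety flagged in (3): the definition of \emph{invariant} only requires $B^{\prime}$ to be a pseudo BCK-meet-semilattice, not explicitly (pC), so one must work through the axioms of $\Phi(B^{\prime})$ to justify that $B^{\prime}$ is automatically (pC) before being allowed to cite part (2).
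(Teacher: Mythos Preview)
The paper does not supply its own proof of this theorem: it is stated with a citation to \cite{Dvu7} and no \texttt{proof} environment follows. So there is nothing to compare against directly.

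That said, your proposal is correct and well organized. Part (2) is exactly the computation one expects, using Remark \ref{ps-eq-100-10} to collapse $x\rightarrow(x\wedge y)$ to $x\rightarrow y$. Part (1) follows immediately from (2) as you say. In part (3) you correctly identify the only nontrivial point: the definition of \emph{invariant} in the paper only asks for a pseudo BCK-meet-semilattice $B'$ with $\Phi(B')=A$, not a priori a (pC) one, so before invoking (2) you must check that $B'$ automatically satisfies (pC). Your argument for this is right: axiom $(A_5)$ of the pseudo equality algebra $\Phi(B')=A$ reads $y\rightarrow' x\le (y\wedge z)\rightarrow'(x\wedge z)$ (and the dual), which is precisely the (pC) condition for $B'$. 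Parts (4) and (5) are indeed formal once (2) and (3) are in hand, and your sketch of the mutual term interpretations and the functoriality is adequate.
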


\begin{prop} \label{ps-eq-150} A pseudo equality algebra $(A, \wedge, \thicksim, \backsim, 1)$ is invariant if and 
only if $x\wedge y\thicksim y=x\thicksim y$ and $x\backsim x\wedge y=x\backsim y$, for all $x, y\in A$.
\end{prop}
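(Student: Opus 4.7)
The plan is to reduce the statement directly to Theorem \ref{ps-eq-140}(3), which says $(A,\wedge,\thicksim,\backsim,1)$ is invariant if and only if $\Phi(\Psi((A,\wedge,\thicksim,\backsim,1)))=(A,\wedge,\thicksim,\backsim,1)$. So the entire proof amounts to unpacking what the composite $\Phi\circ\Psi$ does on the $\thicksim$- and $\backsim$-operations and comparing with the two identities in the statement.

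First I would write out $\Psi(A)=(A,\wedge,\rightarrow,\rightsquigarrow,1)$ using the definitions from Theorem \ref{ps-eq-110}(1), namely $x\rightarrow y=(x\wedge y)\thicksim x$ and $x\rightsquigarrow y=x\backsim(x\wedge y)$. Then I would apply $\Phi$ to this pseudo BCK(pC)-meet-semilattice using the formulas from Theorem \ref{ps-eq-110}(2); denoting the resulting equality operations by $\thicksim'$ and $\backsim'$, one gets
\[
x\thicksim' y = y\rightarrow x = (y\wedge x)\thicksim y = (x\wedge y)\thicksim y,
\qquad
x\backsim' y = x\rightsquigarrow y = x\backsim(x\wedge y).
\]

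At this point, $\Phi(\Psi(A))=A$ is equivalent to requiring $x\thicksim' y=x\thicksim y$ and $x\backsim' y=x\backsim y$ for all $x,y\in A$, i.e.
\[
(x\wedge y)\thicksim y = x\thicksim y \quad\text{and}\quad x\backsim(x\wedge y) = x\backsim y,
\]
which are precisely the two identities in the statement. Combining this with Theorem \ref{ps-eq-140}(3) yields the claimed equivalence.

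There is really no hidden obstacle here: the argument is a direct computation with the formulas defining $\Psi$ and $\Phi$, and the only subtlety worth flagging for the reader is commutativity of $\wedge$, which is what reconciles the ``$y\wedge x$'' arising from $y\rightarrow x$ with the ``$x\wedge y$'' appearing in the stated identity. Thus the proof will be short, essentially a one-line application of Theorem \ref{ps-eq-140}(3) after substituting the definitions.
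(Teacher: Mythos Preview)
Your proposal is correct and follows essentially the same approach as the paper's own proof: both invoke Theorem \ref{ps-eq-140}(3) to reduce invariance to the equality $\Phi(\Psi(A))=A$, then unpack the definitions of $\Psi$ and $\Phi$ from Theorem \ref{ps-eq-110} to obtain exactly the two identities in the statement. Your remark about commutativity of $\wedge$ is the only (minor) point you make explicit that the paper leaves implicit.
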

\begin{proof}
According to Theorem \ref{ps-eq-140}, $A$ is invariant if and only if 
$\Phi(\Psi((A, \wedge, \thicksim, \backsim, 1)))=(A, \wedge, \thicksim, \backsim, 1)$. 
Taking into consideration Theorem \ref{ps-eq-110}, 
$\Psi((A, \wedge, \thicksim, \backsim, 1))=(A, \wedge, \rightarrow, \rightsquigarrow, 1)$, where 
$x\rightarrow y=x\wedge y\thicksim x$, $x\rightsquigarrow y=x\backsim x\wedge y$ and 
$\Phi(\Psi((A, \wedge, \thicksim, \backsim, 1)))=(A, \wedge, \thicksim^{\prime}, \backsim^{\prime}, 1)$, where 
$x\thicksim^{\prime} y =y\rightarrow x=x\wedge y\thicksim y$, 
$x\backsim^{\prime} y=x\rightsquigarrow y=x\backsim x\wedge y$ for all $x, y\in A$. \\
Hence we have $(A, \wedge, \thicksim, \backsim, 1)=(A, \wedge, \thicksim^{\prime}, \backsim^{\prime}, 1)$ if 
and only if $x\wedge y\thicksim y=x\thicksim y$ and $x\backsim x\wedge y=x\backsim y$, for all $x, y\in A$.
\end{proof}

\begin{rem} \label{ps-eq-150-10} 
Let $(A, \wedge, \thicksim, \backsim, 1)$ be an invariant pseudo equality algebra and let $x, y\in A$. 
According to Proposition \ref{ps-eq-150}, $x\wedge y\thicksim x=y\thicksim x$ and $x\backsim x\wedge y=x\backsim y$. 
If $x\le y$, then $y\thicksim x=1$ and $x\backsim y=1$. 
On the other hand, by Proposition \ref{ps-eq-10}$(3)$, if $y\thicksim x=1$ or $x\backsim y=1$, then $x\le y$. 
Hence $x\le y$ iff $y\thicksim x=1$ iff $x\backsim y=1$.  
\end{rem}


The commutative pseudo equality algebras have been defined and studied in \cite{Ciu9}. \\
A pseudo equality algebra $(A, \wedge, \thicksim, \backsim, 1)$ is said to be \emph{commutative} if 
the following hold: \\
$\hspace*{3cm}$ $(x\wedge y\thicksim x)\backsim y=(x\wedge y\thicksim y)\backsim x,$ \\
$\hspace*{3cm}$ $y\thicksim (x\backsim x\wedge y)=x\thicksim (y\backsim x\wedge y)$ \\ 
for all $x, y\in A$. \\
Obviously an invariant pseudo equality algebra $(A, \wedge, \thicksim, \backsim, 1)$ is commutative if and only if $(y\thicksim x)\backsim y=(x\thicksim y)\backsim x$ and 
$y\thicksim (x\backsim y)=x\thicksim (y\backsim x)$, for all $x, y\in A$. \\
A pseudo equality algebra $(A, \wedge, \thicksim, \backsim, 1)$ is said to be 
\emph{symmetric pseudo equality algebra} if $x\backsim y=y\thicksim x$ for all $x, y\in A$. 
Obviously any equality algebra is a symmetric equality algebra.

\begin{rems} \label{com-eq-60} $\rm($\cite{Ciu9}$\rm)$ 
$(1)$ A pseudo equality algebra $(A, \wedge, \thicksim, \backsim, 1)$ is commutative 
if and only if its corresponding pseudo BCK(pC)-meet-semilattice $\Psi(A)$ is commutative. \\
$(2)$ Every finite invariant commutative pseudo equality algebra is a symmetric pseudo equality algebra. \\
$(3)$ If $(A, \wedge, \thicksim, \backsim, 1)$ is a symmetric pseudo equality algebra, then 
$\Psi(A)=(A, \wedge, \rightarrow, \rightsquigarrow, 1)$ is a BCK(pC)-meet-semilattice. 
\end{rems}

In what follows we recall some notions and results regarding the deductive systems and congruences on a 
pseudo equality algebra (see \cite{Dvu7}). \\
Let $(A, \wedge, \thicksim, \backsim, 1)$ be a pseudo equality algebra. A subset $D\subseteq A$ is called a \emph{deductive system} of $A$ if for all $x, y\in A$: \\
$(DS_1)$ $1 \in D;$ \\
$(DS_2)$ if $x\in D$, $x\le y$, then $y\in D;$ \\
$(DS_3)$ if $x, y \thicksim x \in D$, then $y\in D$. \\
A subset $D\subseteq A$ is a deductive system of $A$ if, for all $x, y\in A$, it satisfies conditions $(DS_1)$, $(DS_2)$ and the condition:\\
$(DS_3^{\prime})$ if $x, x \backsim y \in D$, then $y\in D$. \\
A deductive system $D$ of a pseudo equality algebra $A$ is \emph{proper} if $D\ne A$. 
A proper deductive system is called \emph{maximal} if it is not strictly contained in any other proper 
deductive system of $A$. 
We will denote by ${\mathcal DS}(A)$ the set of all deductive systems of $A$. 
Clearly, $\{1\}, A \subseteq {\mathcal DS}(A)$ and ${\mathcal DS}(A)$ is closed under arbitrary intersections. 
As a consequence, $({\mathcal DS}(A), \subseteq)$ is a complete lattice. 
The set of deductive systems of an invariant  pseudo equality algebra coincides with the set of deductive systems of its corresponding pseudo BCK(pC)-meet-semilattice. \\
A deductive system $D$ of $A$ is called \emph{closed} if $x\thicksim y, x\backsim y\in D$ for all $x, y\in D$. 
According to \cite[Prop. 4.5]{Dvu7}, a deductive system $D$ of a pseudo equality algebra $A$ is closed 
if and only if $1\thicksim x, x\backsim 1\in D$ for all $x\in D$. 

\begin{prop} \label{lds-eqa-45} Every deductive system of an invariant pseudo equality algebra $A$ is a subalgebra of $A$.
\end{prop}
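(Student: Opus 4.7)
The plan is to reduce the statement to the analogous fact for pseudo BCK(pC)-meet-semilattices by exploiting invariance. Let $D$ be a deductive system of an invariant pseudo equality algebra $(A,\wedge,\thicksim,\backsim,1)$. By Proposition \ref{ps-eq-150}, invariance gives the identities $x\wedge y\thicksim y=x\thicksim y$ and $x\backsim x\wedge y=x\backsim y$, which in terms of the induced implications $x\rightarrow y=x\wedge y\thicksim x$ and $x\rightsquigarrow y=x\backsim x\wedge y$ of $\Psi(A)$ translate to $y\thicksim x=x\rightarrow y$ and $x\backsim y=x\rightsquigarrow y$. Thus the defining condition $(DS_3)$ of a deductive system becomes the modus-ponens condition $(ii)$ of a pseudo BCK deductive system, and the remark in the excerpt (deductive systems of an invariant pseudo equality algebra coincide with those of $\Psi(A)$) applies.

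Now I have to check that $D$ is closed under $\thicksim$, $\backsim$ and $\wedge$. For $\thicksim$-closure, take $x,y\in D$; since $x\thicksim y=y\rightarrow x$ and by Lemma \ref{ps-eq-40}$(5)$ $x\le y\rightarrow x$, upward closure $(DS_2)$ yields $x\thicksim y\in D$. The $\backsim$-closure is symmetric: $x\backsim y=x\rightsquigarrow y$, $y\le x\rightsquigarrow y$ by Lemma \ref{ps-eq-40}$(5)$, and $y\in D$, so $x\backsim y\in D$.

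For $\wedge$-closure I use Remark \ref{ps-eq-100-10}, which states that in any pseudo BCK(pC)-meet-semilattice $x\rightarrow(x\wedge y)=x\rightarrow y$. Applying Lemma \ref{ps-eq-40}$(5)$ once more, $y\le x\rightarrow y$, so $x\rightarrow y\in D$ by upward closure, and hence $x\rightarrow(x\wedge y)\in D$. Since $x\in D$ and $D$ is a deductive system of $\Psi(A)$, modus ponens gives $x\wedge y\in D$. Together with $1\in D$ from $(DS_1)$, this shows $D$ is closed under all the operations of $A$, i.e.\ a subalgebra. The only delicate point is invoking invariance correctly to identify $\thicksim,\backsim$ with the pseudo BCK implications of $\Psi(A)$; the rest is routine bookkeeping with the pseudo-contraction identity from Remark \ref{ps-eq-100-10}.
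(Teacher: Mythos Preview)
Your proof is correct and follows essentially the same route as the paper. Both arguments establish closure under $\thicksim$, $\backsim$, and $\wedge$ by exploiting invariance; the paper obtains $\thicksim,\backsim$-closure by citing \cite[Ex.~4.6]{Dvu7} (every deductive system of an invariant pseudo equality algebra is closed), whereas you prove it directly via the identification $x\thicksim y=y\rightarrow x$, $x\backsim y=x\rightsquigarrow y$ together with $x\le y\rightarrow x$, which is exactly the content behind that cited example. For $\wedge$-closure the paper uses $x\thicksim y\le x\wedge y\thicksim y$ and $(DS_3)$ with $y$ as the pivot, while you use $x\rightarrow(x\wedge y)=x\rightarrow y$ and modus ponens in $\Psi(A)$ with $x$ as the pivot---these are symmetric variants of the same computation.
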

\begin{proof} 
Let $D\in {\mathcal DS}(A)$, so by $(DS_1)$, $1\in D$. Consider $x, y\in D$. \\ 
According to \cite[Ex. 4.6]{Dvu7}, $D$ is closed, that is $x\thicksim y, x\backsim y\in D$ for all $x, y\in D$.
Since by Proposition \ref{ps-eq-20}$(5)$ $x\thicksim y \le x\wedge y\thicksim y$, we get $x\wedge y\thicksim y \in D$ and finally, from $y, x\wedge y\thicksim y \in D$ it follows that $x\wedge y \in D$. 
Thus $D$ is a subalgebra of $A$.
\end{proof}

A deductive system $D$ of a pseudo equality algebra $A$ is called \emph{normal} if it satisfies the condition:\\
$(DS_4)$ $x \thicksim y, y\thicksim x \in D$ iff $y \backsim x, x\backsim y \in D$, for all $x, y\in A$. 
We will denote by ${\mathcal DS}_n(A)$ the set of all normal deductive systems of $A$. \\
Obviously $\{1\}, A \in {\mathcal DS}_n(A)$ and ${\mathcal DS}_n(A)\subseteq {\mathcal DS}(A)$.
A subset $\Theta \subseteq A \times A$ is called a \emph{congruence} of $A$ if it is an 
equivalence relation on $A$ and for all $x_1, y_1, x_2, y_2\in A$ such that $(x_1, y_1), (x_2, y_2) \in \Theta$ 
the following hold: \\
$(CG_1)$ $(x_1 \wedge x_2, y_1 \wedge y_2) \in \Theta;$ \\
$(CG_2)$ $(x_1 \thicksim x_2, y_1 \thicksim y_2) \in \Theta;$ \\
$(CG_2)$ $(x_1 \backsim x_2, y_1 \backsim y_2) \in \Theta$. \\
We will denote by ${\mathcal Con}(A)$ the set of all congruences of $A$. \\
With any $H\in {\mathcal DS}_n(A)$ we associate a binary relation $\Theta_H$ by defining $x\Theta_H y$ iff 
$x \thicksim y \in H$ iff $x \backsim y \in H$. \\
If $\Theta$ is congruence relation on a pseudo 
equality algebra $(A, \wedge, \thicksim, \backsim, 1)$, then 
$F_{\Theta}=[1]_{\Theta}=\{x\in A\mid (x, 1)\in \Theta \}$ is a closed normal deductive system of $A$. \\
If $H\in {\mathcal DS}_n(A)$, then  
$H_{\Theta}=\{(x, y)\in A \times A \mid x\thicksim y, y\thicksim x \in H\}=
\{(x, y)\in A \times A \mid x\backsim y, y\backsim x \in H\} \in {\mathcal Con}(A)$.

\begin{theo} \label{lds-eqa-100} Let $(A, \wedge, \thicksim, \backsim, 1)$ be an invariant pseudo equality algebra. 
Then there is a one-to-one correspondence between the set of all normal deductive systems of $A$ and 
${\mathcal Con}(A)$. 
\end{theo}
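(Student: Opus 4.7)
The plan is to verify that the two maps $\alpha\colon H\mapsto H_\Theta$ from ${\mathcal DS}_n(A)$ to ${\mathcal Con}(A)$ and $\beta\colon \Theta\mapsto F_\Theta=[1]_\Theta$ in the reverse direction, both introduced in the paragraph preceding the theorem, are mutually inverse. The excerpt already records that each of them is well defined, so the work reduces to checking $\beta\circ\alpha=\mathrm{id}$ and $\alpha\circ\beta=\mathrm{id}$.

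For $\beta(\alpha(H))=H$ I first unfold $F_{H_\Theta}=\{x\in A: x\thicksim 1, 1\thicksim x\in H\}=\{x\in A : x, 1\thicksim x\in H\}$, using $x\thicksim 1=x$ from $(A_3)$. Since $A$ is invariant, the fact invoked inside the proof of Proposition \ref{lds-eqa-45} (namely \cite[Ex.~4.6]{Dvu7}) asserts that every deductive system of $A$ is closed, so $1,x\in H$ forces $1\thicksim x\in H$; the displayed set therefore equals $H$.

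The substantive content lies in $\alpha(\beta(\Theta))=\Theta$. The inclusion $\Theta\subseteq H_{F_\Theta}$ is an immediate application of the compatibility of $\Theta$ with $\thicksim$: pairing $(x,y)\in\Theta$ with $(y,y)\in\Theta$ and with $(x,x)\in\Theta$ yields $(x\thicksim y,1),(y\thicksim x,1)\in\Theta$, so $x\thicksim y,\ y\thicksim x\in F_\Theta$. The main obstacle, and the step that actually consumes the invariance hypothesis, is the reverse inclusion: from $(x\thicksim y,1),(y\thicksim x,1)\in\Theta$ one must reconstruct $(x,y)\in\Theta$. My plan is to exploit the two estimates $x\le (y\thicksim x)\backsim y$ and $y\le (y\thicksim x)\backsim y$. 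The first is Proposition \ref{ps-eq-10}(2); the second comes from Proposition \ref{ps-eq-20}(4) once one rewrites $x\wedge y\thicksim x=y\thicksim x$ using the invariance characterisation of Proposition \ref{ps-eq-150}. Applying the compatibility of $\Theta$ with $\backsim$ to $(y\thicksim x,1)$ and $(y,y)$, and using $1\backsim y=y$ from $(A_3)$, one obtains $((y\thicksim x)\backsim y,\ y)\in\Theta$. Meeting this with $(x,x)\in\Theta$ (compatibility with $\wedge$) collapses the left-hand side to $x$ because $x\le (y\thicksim x)\backsim y$ and the right-hand side to $x\wedge y$, producing $(x,x\wedge y)\in\Theta$. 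The symmetric manoeuvre starting from $(x\thicksim y,1)\in\Theta$ yields $(y,x\wedge y)\in\Theta$, and the transitivity of $\Theta$ then closes the argument.

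The only genuine obstacle in the plan is the inequality $y\le (y\thicksim x)\backsim y$: outside the invariant setting Proposition \ref{ps-eq-20}(4) only gives $y\le (x\wedge y\thicksim x)\backsim y$, and because $\backsim$ is antitone in its first slot under $(A_4)$, this weaker bound cannot be strengthened without invariance. Once that estimate is in hand, everything else is straightforward bookkeeping with the congruence axioms.
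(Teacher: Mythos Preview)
Your argument is correct, but it takes a different route from the paper. The paper's proof is a two-line reduction: it quotes \cite[Th.~4.11]{Dvu7}, which already establishes the bijection between \emph{closed} normal deductive systems and ${\mathcal Con}(A)$, and then invokes \cite[Ex.~4.6]{Dvu7} to observe that in an invariant pseudo equality algebra every deductive system is automatically closed, so ``closed normal'' collapses to ``normal''. No computation with the maps $\alpha,\beta$ is carried out in the paper itself.

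Your approach, by contrast, verifies $\beta\circ\alpha=\mathrm{id}$ and $\alpha\circ\beta=\mathrm{id}$ directly, using invariance in two places: once (via the closedness of deductive systems) to show $F_{H_\Theta}=H$, and once (via Proposition~\ref{ps-eq-150}) to obtain the estimate $y\le (y\thicksim x)\backsim y$ needed for $H_{F_\Theta}\subseteq\Theta$. This is more self-contained---it does not black-box the Dvure\v censkij--Zahiri correspondence---and it makes explicit exactly where invariance enters the reverse inclusion, which the paper's citation-based proof leaves opaque. The trade-off is length: the paper dispatches the theorem in two sentences, while your argument spells out the congruence-manipulation that \cite[Th.~4.11]{Dvu7} already packages.
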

\begin{proof}
According to \cite[Th. 4.11]{Dvu7}, there is a one-to-one correspondence between the set of all normal closed deductive systems of $A$ and ${\mathcal Con}(A)$. The assertion follows from the fact that any deductive system of an invariant pseudo equality algebra is closed (\cite[Ex. 4.6]{Dvu7}).
\end{proof}

Let $(A, \wedge, \thicksim, \backsim, 1)$ be a pseudo equality algebra and $H\in {\mathcal DS}_n(A)$. \\
Denote $A/{\Theta_H}=\{x/{\Theta_H}\mid x\in A\}$, where $x/{\Theta_H}=\{y\in A\mid (x,y)\in \Theta_H\}$. 
We define the following operations on $A/{\Theta_H}$: 
$x/{\Theta_H} \wedge y/{\Theta_H}=(x\wedge y)/{\Theta_H}$, 
$x/{\Theta_H} \thicksim y/{\Theta_H}=(x\thicksim y)/{\Theta_H}$, 
$x/{\Theta_H} \backsim y/{\Theta_H}=(x\backsim y)/{\Theta_H}$. \\
If $H\in {\mathcal DS}_n(A)$, then $(A/{\Theta_H},\wedge,\thicksim,\backsim,1/{\Theta_H})$ is a pseudo equality algebra. \\ 
A pseudo equality algebra $(A, \wedge, \thicksim, \backsim, 1)$ is called \emph{simple} if 
${\mathcal DS}(A)=\{\{1\}, A\}$.

$\vspace*{5mm}$

\section{Examples of pseudo equality algebras}

In this section we give examples of pseudo equality algebras and their classes - invariant, commutative 
and symmetric pseudo equality algebras. 

\begin{ex} \label{ps-ex-10} $\rm($\cite{Dvu7}$\rm)$
Let $(G, \vee,\wedge, \cdot, ^{-1}, e)$ be an $\ell$-group. On the negative cone $G^{-}=\{g\in G \mid g\le e\}$ we define the operations 
$x\thicksim y=(x\cdot y^{-1})\wedge e$, $x\backsim y=(x^{-1}\cdot y)\wedge e$.  
Then $(G^{-}, \wedge, \thicksim, \backsim, e)$ is a pseudo equality algebra. 
We have $x\thicksim y=y\backsim x$ if and only if $G$ is Abelian.
\end{ex}

\begin{ex} \label{ps-ex-20} $\rm($\cite{Dvu7}$\rm)$
Let $(A, \odot, \rightarrow, \rightsquigarrow, 1)$ be a pseudo-hoop. 
If we define the operations $x\thicksim y=y\rightarrow x$, $x\backsim y=x\rightsquigarrow y$, 
then $(A, \wedge, \thicksim, \backsim, 1)$ is a pseudo equality algebra, where 
$x\wedge y=(x\rightarrow y)\odot x=(y\rightarrow x)\odot y=x\odot(x\rightsquigarrow y)=y\odot(y\rightsquigarrow x)$.
\end{ex}

\begin{ex} \label{ps-ex-30} 
Let $(A, \wedge, \rightarrow, \rightsquigarrow, 1)$ be a pseudo BCK(pD)-meet-semilattice. 
Define the operations $x\thicksim y=y\rightarrow x$, $x\backsim y=x\rightsquigarrow y$. 
According to Proposition \ref{ps-eq-100}, $A$ is a pseudo BCK(pC)-meet-semilattice, and by Theorem \ref{ps-eq-110},   
$(A, \wedge, \thicksim, \backsim, 1)$ is a pseudo equality algebra.
\end{ex}

\begin{ex} \label{ps-ex-40} 
Let $(A, \wedge, \rightarrow, \rightsquigarrow, 1)$ be a linearly ordered pseudo BCK-meet-semilattice. 
By Remark \ref{ps-eq-80} and Proposition \ref{ps-eq-100}, $A$ is a pseudo BCK(pC)-meet-semilattice, thus by Theorem \ref{ps-eq-110}, $(A, \wedge, \thicksim, \backsim, 1)$ is a pseudo equality algebra, where 
$x\thicksim y=y\rightarrow x$, $x\backsim y=x\rightsquigarrow y$.
\end{ex}

\begin{ex}\label{ps-ex-50} 
Let $B=\{0,a,b,1\}$ with $0<a, b<1$ be a lattice whose diagram is below. 
\begin{center}
\begin{picture}(50,100)(0,0)
\put(37,11){\circle*{3}}
\put(34,0){$0$}
\put(37,11){\line(3,4){20}}
\put(57,37){\circle*{3}}
\put(61,35){$b$}

\put(37,11){\line(-3,4){20}}
\put(18,37){\circle*{3}}
\put(8,35){$a$}

\put(18,37){\line(3,4){20}}
\put(38,64){\circle*{3}}
\put(35,68){$1$} 

\put(38,64){\line(3,-4){20}}

\end{picture}
\end{center}

Then $(B, \wedge, \rightarrow, 1)$ is a BCK(P)-lattice with the operations $\rightarrow$, $\odot$ given by the 
tables below:  
\[
\hspace{10mm}
\begin{array}{c|ccccc}
\rightarrow& 0 & a & b & 1 \\ \hline
0 & 1 & 1 & 1 & 1 \\ 
a & b & 1 & b & 1 \\ 
b & a & a & 1 & 1 \\  
1 & 0 & a & b & 1
\end{array}
\hspace{10mm}
\begin{array}{c|ccccc}
\odot& 0 & a & b & 1 \\ \hline
0 & 0 & 0 & 0 & 0 \\ 
a & 0 & a & 0 & a \\ 
b & 0 & 0 & b & b \\  
1 & 0 & a & b & 1
\end{array}
.
\]

Thus $(B, \wedge, \rightarrow, 1)$ is a BCK(D)-lattice, so it is a BCK(C)-lattice, and 
$\Phi(B)=(B, \wedge, \thicksim, \backsim, 1)$ is a pseudo equality algebra with the operations 
$\thicksim, \backsim$ given below: 
\[
\hspace{10mm}
\begin{array}{c|ccccc}
\thicksim& 0 & a & b & 1 \\ \hline
0 & 1 & b & a & 0 \\ 
a & 1 & 1 & a & a \\ 
b & 1 & b & 1 & b \\  
1 & 1 & 1 & 1 & 1
\end{array}
\hspace{10mm}
\begin{array}{c|ccccc}
\backsim& 0 & a & b & 1 \\ \hline
0 & 1 & 1 & 1 & 1 \\ 
a & b & 1 & b & 1 \\ 
b & a & a & 1 & 1 \\  
1 & 0 & a & b & 1
\end{array}
.
\]
One can easily chack that $\Phi(\Psi(B))=B$, thus $(B, \wedge, \thicksim, \backsim, 1)$ is an invariant pseudo 
equality algebra. We mention that ${\mathcal DS}(B)={\mathcal DS}_n(B)=\{\{1\}, \{a,1\}, \{b,1\}, B\}$. 
\end{ex}

\begin{ex}\label{ps-ex-60} 
Consider the lattice $A=\{0,a,b,1\}$ with $0<a, b<1$ whose diagram is given in Example \ref{ps-ex-50}.   
Then the structure $(A, \wedge, \thicksim, 1)$ is an equality algebra with $\thicksim$ given below:
\[
\hspace{10mm}
\begin{array}{c|ccccc}
\thicksim& 0 & a & b & 1 \\ \hline
0 & 1 & b & a & 0 \\ 
a & b & 1 & 0 & a \\ 
b & a & 0 & 1 & b \\  
1 & 0 & a & b & 1
\end{array}
.
\]
If $(B, \wedge, \rightarrow, 1)$ is the BCK(C)-lattice from Example \ref{ps-ex-50}, we can see that $\Psi(A)=B$, 
but $\Phi(\Psi(A))\neq A$, hence $(A, \wedge, \thicksim, 1)$ is not invariant.
\end{ex}

\begin{ex}\label{ps-ex-70} Let $(A_1, \wedge_1, \thicksim_1, \backsim_1, 1_1)$ and 
$(A_2, \wedge_2, \thicksim_2, \backsim_2, 1_2)$ be two pseudo equality algebras.
Denote $A=A_1 \times A_2=\{(x_1,x_2) \mid x_1\in A_1, x_2\in A_2\}$ and, for all $(x_1, x_2), (y_1, y_2)\in A$ 
define the operations $\wedge, \thicksim, \backsim, 1$ as follows: 
$(x_1, x_2)\wedge (y_1, y_2)=(x_1\wedge_1 y_1, x_2\wedge_2 y_2)$, 
$(x_1, x_2)\thicksim (y_1, y_2)=(x_1\thicksim_1 y_1, x_2\thicksim_2 y_2)$, 
$(x_1, x_2)\backsim (y_1, y_2)=(x_1\backsim_1 y_1, x_2\backsim_2 y_2)$, 
$1=(1_1, 1_2)$. 
Then $(A, \wedge, \thicksim, \backsim, 1)$ is a pseudo equality algebra. Moreover: \\
$(1)$ If $D_1\in {\mathcal DS}(A_1)$, $D_2\in {\mathcal DS}(A_2)$, then $D_1\times D_2\in {\mathcal DS}(A);$ \\
$(2)$ If $D_1\in {\mathcal DS}_n(A_1)$, $D_2\in {\mathcal DS}_n(A_2)$, then $D_1\times D_2\in {\mathcal DS}_n(A)$.
\end{ex} 

\begin{ex} \label{ps-ex-80} 
Let $(A, \wedge, \rightarrow, \rightsquigarrow, 1)$ be a commutative pseudo BCK-meet-semilattice, that is  
$(x\rightarrow y)\rightsquigarrow y=(y\rightarrow x)\rightsquigarrow x$ and 
$(x\rightsquigarrow y)\rightarrow y=(y\rightsquigarrow x)\rightarrow x$ for all $x, y\in A$. \\
By \cite[Lemma 4.1.12]{Kuhr6}, $A$ is a pseudo BCK(pD)-meet-semilattice, so $(A, \wedge, \thicksim, \backsim, 1)$ is a pseudo equality algebra, where $x\thicksim y=y\rightarrow x$, $x\backsim y=x\rightsquigarrow y$.
\end{ex}

\begin{ex} \label{ps-ex-90}
Let $(G, \vee,\wedge, \cdot, ^{-1}, e)$ be an $\ell$-group. On the negative cone $G^{-}=\{g\in G \mid g\le e\}$ we define the operations $x\rightarrow y=y\cdot (x\vee y)^{-1}$, $x\rightsquigarrow y=(x\vee y)^{-1}\cdot y$. 
According to \cite[Example 4.1.2]{Kuhr6} and Example \ref{ps-ex-80} , 
$(G^{-}, \wedge, \rightarrow, \rightsquigarrow, e)$ is a commutative pseudo BCK(pD)-meet-semilattice. 
By Remarks \ref{com-eq-60} it follows that $\Phi(G^{-})$ is a commutative pseudo equality algebra.
\end{ex}

\begin{ex}\label{ps-ex-100} 
The pseudo equality algebra $(B, \wedge, \thicksim, \backsim, 1)$ from Example \ref{ps-ex-50} is commutative 
and symmetric.
\end{ex}

$\vspace*{5mm}$

\section{Pointed pseudo equality algebras}

We define and investigate the pointed pseudo equality algebras and compatible pseudo equality algebras, and we show that a good pseudo-hoop is a compatible pseudo equality algebra.

\begin{Def} \label{pps-eq-10}
A pseudo equality algebra $\mathcal{A}=(A, \wedge, \thicksim, \backsim, 1)$ with a constant $a\in A$ (which 
can denote any element) is called a \emph{pointed pseudo equality algebra}.
\end{Def} 

A pointed pseudo equality algebra is denoted by $\mathcal{A}=(A, \wedge, \thicksim, \backsim, a, 1)$. 

\begin{Def} \label{pps-eq-20} Let $\mathcal{A}=(A, \wedge, \thicksim, \backsim, a, 1)$ be a pointed pseudo equality algebra. For every $x\in A$ define two pairs of \emph{negations relative to $a$} or \emph{$a$-relative negations}: \\ 
$\hspace*{3cm}$ $x^{\thicksim_a}=a\thicksim x, \:\:$   
                $x^{\backsim_a}=x\backsim a$ and \\ 
$\hspace*{3cm}$ $x^{\rightarrow_a}=x\rightarrow a, \:\:$    
                $x^{\rightsquigarrow_a}=x\rightsquigarrow a$.  
\end{Def}

\begin{prop} \label{pps-eq-30} In any pointed pseudo equality algebra $(A, \wedge, \thicksim, \backsim, a, 1)$ the 
following hold for all $x, y\in A$:\\
$(1)$ $1^{\thicksim_a}=1^{\backsim_a}=a$ and $1^{\thicksim_a\backsim_a}=1^{\backsim_a\thicksim_a}=1;$ \\
$(2)$ $a^{\thicksim_a}=a^{\backsim_a}=1$ and $a^{\thicksim_a\backsim_a}=a^{\backsim_a\thicksim_a}=a;$ \\
$(3)$ $x\le x^{\thicksim_a \backsim_a}$ and $x\le x^{\backsim_a\thicksim_a};$ \\
$(4)$ $x\thicksim y\le x^{\thicksim_a} \backsim y^{\thicksim_a}\le 
                       x^{\thicksim_a\backsim_a} \thicksim y^{\thicksim_a\backsim_a}$ and \\
$\hspace*{0.5cm}$                       
      $x\backsim y\le x^{\backsim_a} \thicksim y^{\backsim_a} \le 
                      x^{\backsim_a\thicksim_a} \backsim y^{\backsim_a\thicksim_a}$.                     
\end{prop}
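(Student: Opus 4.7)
The plan is to verify each of the four items in turn, relying almost entirely on the defining axioms $(A_2)$, $(A_3)$, $(A_6)$ together with Proposition~\ref{ps-eq-10}(2). Most claims reduce to instantiating an earlier inequality with $z=a$ or $y=a$, so the bulk of the work is bookkeeping.

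For item~(1), I would unfold the definitions and apply $(A_3)$: $1^{\thicksim_a}=a\thicksim 1=a$ and $1^{\backsim_a}=1\backsim a=a$. The composite statements then follow from~(2), which I treat next, since $1^{\thicksim_a\backsim_a}=a^{\backsim_a}$ and $1^{\backsim_a\thicksim_a}=a^{\thicksim_a}$. For item~(2), axiom $(A_2)$ immediately gives $a^{\thicksim_a}=a\thicksim a=1$ and $a^{\backsim_a}=a\backsim a=1$, and then $(A_3)$ yields $a^{\thicksim_a\backsim_a}=1^{\backsim_a}=1\backsim a=a$ and $a^{\backsim_a\thicksim_a}=1^{\thicksim_a}=a\thicksim 1=a$. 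At that point the second half of (1) is also in hand.

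For item~(3), the trick is to specialize Proposition~\ref{ps-eq-10}(2), which states
\[
x\le ((y\thicksim x)\backsim y)\wedge (y\thicksim (x\backsim y)),
\]
by setting $y:=a$. This directly produces $x\le (a\thicksim x)\backsim a = x^{\thicksim_a\backsim_a}$ and $x\le a\thicksim (x\backsim a) = x^{\backsim_a\thicksim_a}$.

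Item~(4) is the main piece and requires two applications of axiom $(A_6)$ for each chain. For the first chain, instantiate the first half of $(A_6)$ with $z:=a$ to obtain $x\thicksim y\le (a\thicksim x)\backsim(a\thicksim y)=x^{\thicksim_a}\backsim y^{\thicksim_a}$; then apply the second half of $(A_6)$ with $x:=a\thicksim x$, $y:=a\thicksim y$, $z:=a$ to conclude
\[
x^{\thicksim_a}\backsim y^{\thicksim_a}\le \bigl((a\thicksim x)\backsim a\bigr)\thicksim\bigl((a\thicksim y)\backsim a\bigr)=x^{\thicksim_a\backsim_a}\thicksim y^{\thicksim_a\backsim_a}.
\]
The second chain is completely symmetric: the second half of $(A_6)$ with $z:=a$ gives $x\backsim y\le x^{\backsim_a}\thicksim y^{\backsim_a}$, and the first half of $(A_6)$ applied to $x\backsim a$ and $y\backsim a$ with $z:=a$ gives the final inequality. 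No genuine obstacle arises; the only point that deserves care is keeping the asymmetric roles of $\thicksim$ and $\backsim$ straight when invoking $(A_6)$, so I would align the substitutions explicitly with the two halves of that axiom.
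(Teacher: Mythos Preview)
Your proof is correct and follows essentially the same approach as the paper: item~(3) is obtained from Proposition~\ref{ps-eq-10}(2) with $y=a$, and item~(4) is derived by first applying each half of $(A_6)$ with $z=a$ and then substituting the resulting negations back into the other half. Your treatment of (1) and (2) is more explicit than the paper's (which omits these as immediate), but the arguments are the same.
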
      
\begin{proof}
$(3)$ It follows from Proposition \ref{ps-eq-10}$(2)$ for $y=a$. \\
$(4)$ Applying $(A_6)$ for $z=a$ we get: \\
$\hspace*{2cm}$
      $x\thicksim y\le x^{\thicksim_a} \backsim y^{\thicksim_a}$ and 
      $x\backsim y\le x^{\backsim_a} \thicksim y^{\backsim_a}$. \\
Replacing $x$ with $x^{\thicksim_a}$ and $y$ with $y^{\thicksim_a}$ in the second inequality, it follows that: \\
$\hspace*{2cm}$
$x^{\thicksim_a} \backsim y^{\thicksim_a}\le x^{\thicksim_a\backsim_a} \thicksim y^{\thicksim_a\backsim_a}$. \\
Replacing $x$ with $x^{\backsim_a}$ and $y$ with $y^{\backsim_a}$ in the first inequality, we have: \\ 
$\hspace*{2cm}$
$x^{\backsim_a} \thicksim y^{\backsim_a} \le x^{\backsim_a\thicksim_a} \backsim y^{\backsim_a\thicksim_a}$. \\
We conclude that:\\
$\hspace*{2cm}$
      $x\thicksim y\le x^{\thicksim_a} \backsim y^{\thicksim_a}\le 
                       x^{\thicksim_a\backsim_a} \thicksim y^{\thicksim_a\backsim_a}$ and \\
$\hspace*{2cm}$                       
      $x\backsim y\le x^{\backsim_a} \thicksim y^{\backsim_a} \le 
                      x^{\backsim_a\thicksim_a} \backsim y^{\backsim_a\thicksim_a}$. 
\end{proof}      

\begin{prop} \label{pps-eq-30-10} Let $(A, \wedge, \thicksim, \backsim, 0, 1)$ be a bounded pseudo equality algebra.
Then for all $x\in A$ we have: $x^{\thicksim_0\backsim_0\thicksim_0}=x^{\thicksim_0}$ and 
$x^{\backsim_0\thicksim_0\backsim_0}=x^{\backsim_0}$.
\end{prop}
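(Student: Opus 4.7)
The plan is to prove each identity as two inequalities, using Proposition~\ref{pps-eq-30}$(3)$ in one direction and axiom $(A_4)$ in the other. The boundedness hypothesis enters only through the fact that $0$ can be placed at the bottom of a chain to which $(A_4)$ is applied.

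For the first identity $x^{\thicksim_0 \backsim_0 \thicksim_0} = x^{\thicksim_0}$, the inequality $x^{\thicksim_0} \le x^{\thicksim_0 \backsim_0 \thicksim_0}$ is immediate: apply the clause $y \le y^{\backsim_a \thicksim_a}$ of Proposition~\ref{pps-eq-30}$(3)$ (with $a = 0$) to the element $y := x^{\thicksim_0}$. For the reverse direction, I would use Proposition~\ref{pps-eq-30}$(3)$ again, this time in the form $x \le x^{\thicksim_0 \backsim_0}$, to obtain the chain $0 \le x \le x^{\thicksim_0 \backsim_0}$. Feeding this chain into axiom $(A_4)$, specifically its clause ``$x \le y \le z$ implies $x \thicksim z \le x \thicksim y$'', with the substitution $x \mapsto 0,\ y \mapsto x,\ z \mapsto x^{\thicksim_0 \backsim_0}$, yields $0 \thicksim x^{\thicksim_0 \backsim_0} \le 0 \thicksim x$, i.e. $x^{\thicksim_0 \backsim_0 \thicksim_0} \le x^{\thicksim_0}$.

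The second identity $x^{\backsim_0 \thicksim_0 \backsim_0} = x^{\backsim_0}$ is handled symmetrically. The inequality $x^{\backsim_0} \le x^{\backsim_0 \thicksim_0 \backsim_0}$ comes from applying the clause $y \le y^{\thicksim_a \backsim_a}$ of Proposition~\ref{pps-eq-30}$(3)$ to $y := x^{\backsim_0}$. For the reverse inequality, the relevant chain is $0 \le x \le x^{\backsim_0 \thicksim_0}$ (the upper bound again given by Proposition~\ref{pps-eq-30}$(3)$); combined with the $(A_4)$ clause ``$x \le y \le z$ implies $z \backsim x \le y \backsim x$'', instantiated at $x \mapsto 0,\ y \mapsto x,\ z \mapsto x^{\backsim_0 \thicksim_0}$, this gives $x^{\backsim_0 \thicksim_0} \backsim 0 \le x \backsim 0$, which is precisely $x^{\backsim_0 \thicksim_0 \backsim_0} \le x^{\backsim_0}$.

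I do not expect any serious obstacle; the argument is purely a matter of unfolding the $\thicksim_0$/$\backsim_0$ notation and selecting the correct monotonicity clause from the four packaged into $(A_4)$. The minor pitfall is matching the right clause to the right identity: for the $\thicksim$--identity one needs the clause that keeps the first slot fixed and contravariantly compares second slots, while for the $\backsim$--identity one needs the clause that keeps the second slot fixed and contravariantly compares first slots. Once these matches are set, both equalities collapse to routine substitutions.
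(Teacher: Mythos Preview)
Your argument is correct. Each of the four inequalities is justified exactly as you describe: the ``upward'' halves by applying Proposition~\ref{pps-eq-30}$(3)$ to $x^{\thicksim_0}$ and $x^{\backsim_0}$, and the ``downward'' halves by feeding the chain $0\le x\le x^{\thicksim_0\backsim_0}$ (resp.\ $0\le x\le x^{\backsim_0\thicksim_0}$) into the appropriate clause of $(A_4)$.

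The paper takes a different route: it simply invokes Proposition~\ref{ps-eq-30}, which records the general identities $y\thicksim((x\wedge y\thicksim x)\backsim y)=x\wedge y\thicksim x$ and $(y\thicksim(x\backsim x\wedge y))\backsim y=x\backsim x\wedge y$, and then specializes to $y=0$ (using $x\wedge 0=0$). Your approach bypasses that general identity and works directly from $(A_4)$ plus the one-sided inequality of Proposition~\ref{pps-eq-30}$(3)$; in the bounded case this is actually more economical, since the global minimum $0$ makes the required chains automatic without needing the auxiliary monotonicity result (Proposition~\ref{ps-eq-20-10}) that underlies the proof of Proposition~\ref{ps-eq-30}. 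The paper's route, on the other hand, packages the work into a single citation and makes clear that the identities here are a special case of something that holds without boundedness in the form of Proposition~\ref{ps-eq-30}.
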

\begin{proof}
It is a consequence of Proposition \ref{ps-eq-30}, since $0\le x$ for all $x\in A$.
\end{proof}

\begin{prop} \label{pps-eq-40} In any pointed pseudo equality algebra $(A, \wedge, \thicksim, \backsim, a, 1)$ the 
following hold for all $x, y\in A$:\\
$(1)$ $1^{\rightarrow_a}=1^{\rightsquigarrow_a}=a;$ \\
$(2)$ $1^{\rightarrow_a\rightsquigarrow_a}=1^{\rightsquigarrow_a\rightarrow_a}=1;$ \\
$(3)$ $a^{\rightarrow_a}=a^{\rightsquigarrow_a}=1;$ \\ 
$(4)$ $a^{\rightarrow_a\rightsquigarrow_a}=a^{\rightsquigarrow_a\rightarrow_a}=a;$ \\
$(5)$ $x\le x^{\rightarrow_a\rightsquigarrow_a}$ and $x\le x^{\rightsquigarrow_a\rightarrow_a};$ \\
$(6)$ $a\le x^{\rightarrow_a\rightsquigarrow_a}$ and $a\le x^{\rightsquigarrow_a\rightarrow_a};$ \\
$(7)$ $x^{\rightarrow_a\rightsquigarrow_a\rightarrow_a}=x^{\rightarrow_a}$ and 
      $x^{\rightsquigarrow_a\rightarrow_a\rightsquigarrow_a}=x^{\rightsquigarrow_a};$ \\
$(8)$ $x\rightarrow y\le y^{\rightarrow_a}\rightsquigarrow x^{\rightarrow_a} \le 
       x^{\rightarrow_a\rightsquigarrow_a} \rightarrow y^{\rightarrow_a\rightsquigarrow_a}$ and \\
   $\hspace*{0.5cm}$    
      $x\rightsquigarrow y\le y^{\rightsquigarrow_a}\rightarrow x^{\rightsquigarrow_a} \le 
       x^{\rightsquigarrow_a\rightarrow_a} \rightsquigarrow ^{\rightsquigarrow_a\rightarrow_a};$ \\
$(9)$ $x^{\thicksim_a}\le x^{\rightarrow_a},\: x^{\rightsquigarrow_a}$ and 
      $x^{\backsim_a}\le x^{\rightarrow_a},\: x^{\rightsquigarrow_a};$ \\
$(10)$ $x^{\thicksim_a}=x^{\rightarrow_a}$ and $x^{\backsim_a}=x^{\rightsquigarrow_a}$ for all $x\ge a$.             
\end{prop}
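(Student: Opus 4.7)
The plan is to obtain each of the ten items by specialising earlier general facts -- principally Proposition \ref{ps-eq-10} -- once one unpacks the definitions $x\rightarrow y=(x\wedge y)\thicksim x$ and $x\rightsquigarrow y=x\backsim(x\wedge y)$. I would dispatch $(1)$--$(4)$ first by direct substitution in Proposition \ref{ps-eq-10}$(6)$: plugging $x:=a$ into $1\rightarrow x=1\rightsquigarrow x=x$ and $a\rightarrow a=a\rightsquigarrow a=1$ gives $1^{\rightarrow_a}=1^{\rightsquigarrow_a}=a$ and $a^{\rightarrow_a}=a^{\rightsquigarrow_a}=1$, and one further application of $(6)$ to these output values delivers the doubly-iterated identities $(2)$ and $(4)$.

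Items $(5)$, $(6)$, $(9)$ and $(10)$ form a second, equally mechanical batch. For $(5)$, set $y=a$ in Proposition \ref{ps-eq-10}$(8)$. For $(6)$, apply Proposition \ref{ps-eq-10}$(7)$ with the outer element replaced by $a$ and the inner one by $x\rightarrow a$ (resp.\ $x\rightsquigarrow a$), yielding $a\le x^{\rightarrow_a\rightsquigarrow_a}$ and $a\le x^{\rightsquigarrow_a\rightarrow_a}$. For $(9)$, Proposition \ref{ps-eq-10}$(1)$ applied with $(x,y):=(a,x)$ and with $(x,y):=(x,a)$ reads $a\thicksim x\le x\rightarrow a$ and $x\backsim a\le x\rightsquigarrow a$, i.e.\ $x^{\thicksim_a}\le x^{\rightarrow_a}$ and $x^{\backsim_a}\le x^{\rightsquigarrow_a}$. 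For $(10)$, the assumption $x\ge a$ gives $x\wedge a=a$, so $x^{\rightarrow_a}=(x\wedge a)\thicksim x=a\thicksim x=x^{\thicksim_a}$ and analogously $x^{\rightsquigarrow_a}=x\backsim(x\wedge a)=x\backsim a=x^{\backsim_a}$.

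The remaining items $(7)$ and $(8)$ carry the most content. For $(7)$, I would split the equality $x^{\rightarrow_a\rightsquigarrow_a\rightarrow_a}=x^{\rightarrow_a}$ into two inequalities. The ``$\le$'' direction starts from $x\le x^{\rightarrow_a\rightsquigarrow_a}$ (item $(5)$) and uses the antitonicity of $\_\rightarrow a$ in its first argument -- Lemma \ref{ps-eq-40}$(2)$, invoked in the pseudo BCK(pC)-meet-semilattice $\Psi(\mathcal{A})$ provided by Theorem \ref{ps-eq-110} -- to conclude $x^{\rightarrow_a\rightsquigarrow_a\rightarrow_a}\le x^{\rightarrow_a}$. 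The ``$\ge$'' direction is the second half of $(5)$ applied with $y:=x^{\rightarrow_a}$: the inequality $y\le y^{\rightsquigarrow_a\rightarrow_a}$ reads exactly $x^{\rightarrow_a}\le x^{\rightarrow_a\rightsquigarrow_a\rightarrow_a}$. The companion identity for $\rightsquigarrow$ is handled by the same argument with $\rightarrow$ and $\rightsquigarrow$ swapped. For $(8)$, two invocations of Proposition \ref{ps-eq-10}$(9)$ with $z=a$ suffice: the first gives $x\rightarrow y\le y^{\rightarrow_a}\rightsquigarrow x^{\rightarrow_a}$, and feeding the pair $(y^{\rightarrow_a},x^{\rightarrow_a})$ into the $\rightsquigarrow$-half of the same proposition produces $y^{\rightarrow_a}\rightsquigarrow x^{\rightarrow_a}\le x^{\rightarrow_a\rightsquigarrow_a}\rightarrow y^{\rightarrow_a\rightsquigarrow_a}$, closing the chain; the $\rightsquigarrow$-side chain is identical modulo symmetry.

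The only point requiring real care will be $(7)$ (and, to a lesser degree, $(8)$), where one must keep straight which implication is being used at each step and apply antitonicity in the correct argument; everything else reduces to a direct quotation of Proposition \ref{ps-eq-10} via the bridge to the associated pseudo BCK(pC)-meet-semilattice given by Theorem \ref{ps-eq-110}.
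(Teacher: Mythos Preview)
Your proposal is correct and follows essentially the same route as the paper: items $(5)$, $(8)$, $(9)$, $(10)$ are obtained exactly as the paper does (via Proposition~\ref{ps-eq-10}$(8)$, $(9)$, $(1)$ and the observation $x\wedge a=a$ when $x\ge a$), while $(1)$--$(4)$ and $(6)$ are left implicit in the paper but your derivation from Proposition~\ref{ps-eq-10}$(6)$,$(7)$ is the intended one. The only difference is for $(7)$: the paper simply cites the identities $((x\rightarrow a)\rightsquigarrow a)\rightarrow a=x\rightarrow a$ and its dual as known in any pseudo BCK-algebra, whereas you spell out the standard two-inequality argument (item $(5)$ plus antitonicity in $\Psi(\mathcal A)$) --- your version is more self-contained but amounts to the same proof.
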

\begin{proof} 
$(5)$ It follows from Proposition \ref{ps-eq-10}$(8)$ for $y=a$. \\
$(7)$ It follows from the identities 
      $((x\rightarrow a)\rightsquigarrow a)\rightarrow a =x\rightarrow a$ and 
      $((x\rightsquigarrow a)\rightarrow a)\rightsquigarrow a =x\rightsquigarrow a$ 
which hold in any pseudo BCK-algebra (see for example \cite{Ciu2}). \\ 
$(8)$ Applying Proposition \ref{ps-eq-10}$(9)$ for $z=a$ we have 
$x\rightarrow y\le y^{\rightarrow_a}\rightsquigarrow x^{\rightarrow_a}$ and 
$x\rightsquigarrow y\le y^{\rightsquigarrow_a}\rightarrow x^{\rightsquigarrow_a}$. 
By the second inequality we get $y^{\rightarrow_a}\rightsquigarrow x^{\rightarrow_a} \le 
x^{\rightarrow_a\rightsquigarrow_a} \rightarrow y^{\rightarrow_a\rightsquigarrow_a}$ and 
by the first one we get 
$y^{\rightsquigarrow_a}\rightarrow x^{\rightsquigarrow_a} \le 
x^{\rightsquigarrow_a\rightarrow_a} \rightsquigarrow ^{\rightsquigarrow_a\rightarrow_a}$. \\
$(9)$ It follows from Proposition \ref{ps-eq-10}$(1)$. \\
$(10)$ Since $a\le x$, we have $x^{\thicksim_a}=a\thicksim x=x\wedge a\thicksim x=x\rightarrow a=x^{\rightarrow_a}$ 
and $x^{\backsim_a}=x\backsim a = x\backsim x\wedge a=x\rightsquigarrow a=x^{\rightsquigarrow_a}$.
\end{proof}

\begin{Def} \label{pps-eq-60} A pointed pseudo equality algebra $(A, \wedge, \thicksim, \backsim, a, 1)$ is said 
to be: \\
$(i)$  \emph{$(^{\thicksim_a, \backsim_a})$-good}, if $x^{\thicksim_a\backsim_a}=x^{\backsim_a\thicksim_a}$ 
for all $x\in A;$ \\
$(ii)$ \emph{$(^{\rightarrow_a, \rightsquigarrow_a})$-good}, if $x^{\rightarrow_a\rightsquigarrow_a}=x^{\rightsquigarrow_a\rightarrow_a}$ for all $x\in A$.
\end{Def}

\begin{Def} \label{pps-eq-70} A pointed pseudo equality algebra $(A, \wedge, \thicksim, \backsim, a, 1)$ is said 
to be: \\
$(i)$  \emph{$(^{\thicksim_a, \backsim_a})$-involutive}, if $x^{\thicksim_a\backsim_a}=x^{\backsim_a\thicksim_a}=x$ 
for all $x\in A;$ \\
$(ii)$ \emph{$(^{\rightarrow_a, \rightsquigarrow_a})$-involutive}, if $x^{\rightarrow_a\rightsquigarrow_a}=x^{\rightsquigarrow_a\rightarrow_a}=x$ for all $x\in A$.
\end{Def}

\begin{rem} \label{pps-eq-80} Any pointed pseudo equality algebra $(A, \wedge, \thicksim, \backsim, a, 1)$ is 
$(^{\thicksim_1, \backsim_1})$-involutive and $(^{\rightarrow_1, \rightsquigarrow_1})$-good.
\end{rem}


\begin{prop} \label{pps-eq-100} Let $(A, \wedge, \thicksim, \backsim, a, 1)$ be a pointed pseudo equality algebra. 
Then the following hold for all $x, y\in A$: \\  
$(1)$ if $A$ is $(^{\thicksim_a, \backsim_a})$-involutive, then 
      $x\thicksim y=x^{\thicksim_a}\backsim y^{\thicksim_a}$ and 
      $x\thicksim y=x^{\backsim_a}\thicksim y^{\backsim_a};$ \\
$(2)$ if $A$ is $(^{\rightarrow_a, \rightsquigarrow_a})$-involutive, then 
      $x\rightarrow y= y^{\rightarrow_a}\rightsquigarrow x^{\rightarrow_a}$ and 
      $x\rightsquigarrow y=y^{\rightsquigarrow_a}\rightarrow x^{\rightsquigarrow_a}$. 
\end{prop}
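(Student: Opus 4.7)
The proof is essentially a ``sandwich'' argument: both Proposition~\ref{pps-eq-30}(4) and Proposition~\ref{pps-eq-40}(8) already give us chains of inequalities of the form $(\text{LHS}) \le (\text{middle}) \le (\text{double-negation of LHS})$, and under the involutivity hypothesis the two outer terms coincide, forcing equality throughout. Partial-order antisymmetry (guaranteed by $(A_1)$) then pins down the middle term.

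For part~(1), I would apply Proposition~\ref{pps-eq-30}(4) to get
\[
x\thicksim y \;\le\; x^{\thicksim_a}\backsim y^{\thicksim_a} \;\le\; x^{\thicksim_a\backsim_a}\thicksim y^{\thicksim_a\backsim_a}.
\]
Since $A$ is $(^{\thicksim_a,\backsim_a})$-involutive, Definition~\ref{pps-eq-70}(i) gives $x^{\thicksim_a\backsim_a}=x$ and $y^{\thicksim_a\backsim_a}=y$, so the rightmost term is just $x\thicksim y$. The three terms of the chain are therefore equal, giving $x\thicksim y=x^{\thicksim_a}\backsim y^{\thicksim_a}$. The second identity of (1) (which I read as $x\backsim y = x^{\backsim_a}\thicksim y^{\backsim_a}$, correcting what appears to be a typo) is proved identically from the second chain in Proposition~\ref{pps-eq-30}(4), using $x^{\backsim_a\thicksim_a}=x$ and $y^{\backsim_a\thicksim_a}=y$.

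Part~(2) is entirely parallel: Proposition~\ref{pps-eq-40}(8) supplies
\[
x\rightarrow y \;\le\; y^{\rightarrow_a}\rightsquigarrow x^{\rightarrow_a} \;\le\; x^{\rightarrow_a\rightsquigarrow_a}\rightarrow y^{\rightarrow_a\rightsquigarrow_a},
\]
and the $(^{\rightarrow_a,\rightsquigarrow_a})$-involutivity hypothesis rewrites the rightmost term as $x\rightarrow y$, forcing equality with the middle term. The $\rightsquigarrow$ version is obtained from the second chain in Proposition~\ref{pps-eq-40}(8) in exactly the same way.

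There is no real obstacle here, since both the needed chains of inequalities and the cancellation of double negations are already in hand; the only mild care needed is verifying that the indices/arguments on the outer terms of each chain genuinely match the originals after applying involutivity (that is, that it is $x^{\thicksim_a\backsim_a}$ and $y^{\thicksim_a\backsim_a}$ appearing, in the correct positions, on the right-hand side), which is immediate from the statement of Proposition~\ref{pps-eq-30}(4) and Proposition~\ref{pps-eq-40}(8).
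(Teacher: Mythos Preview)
Your proposal is correct and follows exactly the approach the paper takes: the paper's proof simply cites Propositions~\ref{pps-eq-30}(4) and~\ref{pps-eq-40}(8), and your sandwich argument is precisely how those chains of inequalities are meant to be used under the involutivity hypotheses. Your observation about the apparent typo in the second identity of~(1) (it should read $x\backsim y = x^{\backsim_a}\thicksim y^{\backsim_a}$ to match the second chain in Proposition~\ref{pps-eq-30}(4)) is also well taken.
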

\begin{proof}
It follows by Propositions \ref{pps-eq-30}$(4)$ and \ref {pps-eq-40}$(8)$, respectively. 
\end{proof}

\begin{prop} \label{pps-eq-110} Let $(A, \wedge, \thicksim, \backsim, a, 1)$ be a pointed pseudo equality algebra.  
Consider the terms $F_1(x,y,z)=(x\thicksim y) \backsim z$ and $F_2(x,y,z)=z\thicksim (y \backsim x)$ satisfying the 
conditions $F_1(x\wedge a,x,a)=F_1(a,x,a)$ and $F_2(x\wedge a,x,a)=F_2(a,x,a)$ 
for all $x\in A$. Then $A$ is $(^{\thicksim_a, \backsim_a})$-good iff it is 
$(^{\rightarrow_a, \rightsquigarrow_a})$-good.  
\end{prop}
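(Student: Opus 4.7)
The plan is to unpack both goodness conditions into equalities of concrete terms, then observe that the two hypotheses on $F_1,F_2$ are precisely statements that equate the arrow-based negations with the tilde-based negations composed in the two orders. Once that is clear, the biconditional is immediate.

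First I would simplify the iterated arrow negations. By Proposition \ref{ps-eq-10}$(7)$ applied with the roles $x\mapsto a$, $y\mapsto x$, we have $a\le x\rightarrow a$ and $a\le x\rightsquigarrow a$. Hence $(x\rightarrow a)\wedge a=a$ and $(x\rightsquigarrow a)\wedge a=a$. Unfolding the definitions $u\rightsquigarrow v=u\backsim (u\wedge v)$ and $u\rightarrow v=(u\wedge v)\thicksim u$ then yields
\begin{align*}
x^{\rightarrow_a\rightsquigarrow_a} &= (x\rightarrow a)\rightsquigarrow a=(x\rightarrow a)\backsim a,\\
x^{\rightsquigarrow_a\rightarrow_a} &= (x\rightsquigarrow a)\rightarrow a=a\thicksim (x\rightsquigarrow a).
\end{align*}

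Next I would translate the hypotheses on $F_1$ and $F_2$. Using $x\rightarrow a=(x\wedge a)\thicksim x$ and $x\rightsquigarrow a=x\backsim(x\wedge a)$, the equations $F_1(x\wedge a,x,a)=F_1(a,x,a)$ and $F_2(x\wedge a,x,a)=F_2(a,x,a)$ read, respectively,
\begin{align*}
(x\rightarrow a)\backsim a &= (a\thicksim x)\backsim a,\\
a\thicksim (x\rightsquigarrow a) &= a\thicksim (x\backsim a).
\end{align*}
Combined with the simplifications of the previous paragraph, this gives the identities
\begin{equation*}
x^{\rightarrow_a\rightsquigarrow_a}=x^{\thicksim_a\backsim_a}\qquad\text{and}\qquad x^{\rightsquigarrow_a\rightarrow_a}=x^{\backsim_a\thicksim_a}.
\end{equation*}

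Finally, the two goodness properties become formally identical: under these identities, $x^{\thicksim_a\backsim_a}=x^{\backsim_a\thicksim_a}$ holds for every $x\in A$ if and only if $x^{\rightarrow_a\rightsquigarrow_a}=x^{\rightsquigarrow_a\rightarrow_a}$ holds for every $x\in A$. I expect the only genuine step requiring care is the observation $a\le x\rightarrow a$, $a\le x\rightsquigarrow a$ that kills the spurious meet with $a$; after that, the argument is a matter of unfolding definitions and comparing the two sides.
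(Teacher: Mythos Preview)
Your proof is correct and follows essentially the same approach as the paper: both reduce the arrow-based double negations to $(x\rightarrow a)\backsim a$ and $a\thicksim(x\rightsquigarrow a)$ via $a\le x\rightarrow a$ and $a\le x\rightsquigarrow a$, then use the $F_1,F_2$ hypotheses to identify these with the tilde-based double negations. Your presentation is in fact slightly more explicit than the paper's, which writes the identity $x^{\rightarrow_a\rightsquigarrow_a}=(x\wedge a\thicksim x)\backsim a$ without justifying the outer $\rightsquigarrow$-to-$\backsim$ step, and then routes the biconditional through the chain $F_1(a,x,a)=F_2(a,x,a)\Leftrightarrow F_1(x\wedge a,x,a)=F_2(x\wedge a,x,a)$ rather than stating the pairwise identities $x^{\rightarrow_a\rightsquigarrow_a}=x^{\thicksim_a\backsim_a}$, $x^{\rightsquigarrow_a\rightarrow_a}=x^{\backsim_a\thicksim_a}$ directly as you do.
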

\begin{proof} For all $x\in A$ we have: \\
$\hspace*{3cm}$ $x^{\thicksim_a \backsim_a}=(a\thicksim x)\backsim a$, \\
$\hspace*{3cm}$ $x^{\backsim_a \thicksim_a}= a\thicksim (x\backsim a)$, \\
$\hspace*{3cm}$ $x^{\rightarrow_a\rightsquigarrow_a}=(x\wedge a\thicksim x)\backsim a$, \\
$\hspace*{3cm}$ $x^{\rightsquigarrow_a\rightarrow_a}=a\thicksim (x\backsim x\wedge a)$. \\
It follows that $A$ is $(^{\thicksim_a, \backsim_a})$-good iff  
$(a\thicksim x)\backsim a=a\thicksim (x\backsim a)$ for all $x\in A$ iff $F_1(a,x,a)=F_2(a,x,a)$ for all $x\in A$ iff 
$F_1(x\wedge a,x,a)=F_2(x\wedge a,x,a)$ for all $x\in A$ iff $x^{\rightarrow_a\rightsquigarrow_a}=x^{\rightsquigarrow_a\rightarrow_a}$ 
for all $x\in A$ iff $A$ is $(^{\rightarrow_a, \rightsquigarrow_a})$-good. 
\end{proof}

\begin{cor} \label{pps-eq-110-10}
A  bounded pseudo equality algebra $(A, \wedge, \thicksim, \backsim, 0, 1)$ is  
$(^{\thicksim_0, \backsim_0})$-good iff it is $(^{\rightarrow_0, \rightsquigarrow_0})$-good.
\end{cor}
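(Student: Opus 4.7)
The plan is to derive the corollary as a direct specialisation of Proposition~\ref{pps-eq-110} with $a=0$. The only thing that needs to be checked is that the hypotheses on the two terms $F_1$ and $F_2$ are automatically satisfied in a bounded pseudo equality algebra.

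First I would observe that since $(A, \wedge, \thicksim, \backsim, 0, 1)$ is bounded, we have $0\le x$ for every $x\in A$, and therefore $x\wedge 0=0$ for every $x\in A$. Substituting this equality into the arguments of $F_1$ and $F_2$ immediately gives
\[
F_1(x\wedge 0, x, 0)=F_1(0,x,0)\quad\text{and}\quad F_2(x\wedge 0, x, 0)=F_2(0,x,0)
\]
for all $x\in A$, so the compatibility conditions of Proposition~\ref{pps-eq-110} hold automatically in the bounded case.

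Then I would invoke Proposition~\ref{pps-eq-110} with $a=0$ to conclude that $A$ is $(^{\thicksim_0,\backsim_0})$-good if and only if it is $(^{\rightarrow_0,\rightsquigarrow_0})$-good. There is essentially no obstacle here beyond noting the identity $x\wedge 0=0$; the real content is all contained in the preceding proposition.
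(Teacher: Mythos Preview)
Your proposal is correct and matches the paper's approach exactly: the corollary is stated immediately after Proposition~\ref{pps-eq-110} with no separate proof, the intended argument being precisely the specialisation $a=0$ together with the observation that $x\wedge 0=0$ makes the hypotheses on $F_1$ and $F_2$ trivially satisfied.
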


\begin{Def} \label{pps-eq-120} A $(^{\thicksim_a, \backsim_a})$-good pseudo equality algebra 
$(A, \wedge, \thicksim, \backsim, a, 1)$ is said to be \emph{compatible with respect to $a$} or 
\emph{$a$-compatible} if it satisfies the following conditions for all $x, y\in A$: \\
$(C_1)$ $(x\thicksim y)^{\thicksim_a\backsim_a}=x^{\thicksim_a\backsim_a}\thicksim y^{\thicksim_a\backsim_a};$ \\
$(C_2)$ $(x\backsim y)^{\thicksim_a\backsim_a}=x^{\thicksim_a\backsim_a}\backsim y^{\thicksim_a\backsim_a};$ \\
$(C_3)$ $(x\wedge y)^{\thicksim_a\backsim_a}=x^{\thicksim_a\backsim_a}\wedge y^{\thicksim_a\backsim_a};$ \\
$(C_4)$ $x^{\thicksim_a\backsim_a\thicksim_a}=x^{\thicksim_a}$ and 
        $x^{\backsim_a\thicksim_a\backsim_a}=x^{\backsim_a}$.  
\end{Def}

\begin{ex} \label{pps-eq-130}
Any $(^{\thicksim_a, \backsim_a})$-involutive pseudo equality algebra $(A, \wedge, \thicksim, \backsim, a, 1)$ is $a$-compatible. 
\end{ex}

\begin{ex} \label{pps-eq-130-10} 
Let $\mathcal{B}=(B, \odot, \rightarrow, \rightsquigarrow, 0, 1)$ be a good pseudo-hoop.
According to \cite{Ciu2}, the following hold for all $x, y\in B$: \\
$(1)$ $(x \rightarrow y)^{\rightarrow_0\rightsquigarrow_0}=
        x^{\rightarrow_0\rightsquigarrow_0} \rightarrow y^{\rightarrow_0\rightsquigarrow_0};$ \\
$(2)$ $(x \rightsquigarrow y)^{\rightarrow_0\rightsquigarrow_0}=
        x^{\rightarrow_0\rightsquigarrow_0} \rightsquigarrow y^{\rightarrow_0\rightsquigarrow_0};$ \\
$(3)$ $(x\wedge y)^{\rightarrow_0\rightsquigarrow_0}=
        x^{\rightarrow_0\rightsquigarrow_0}\wedge y^{\rightarrow_0\rightsquigarrow_0};$ \\
$(4)$ $x^{\rightarrow_0\rightsquigarrow_0\rightarrow_0}=x^{\rightarrow_0}$ and 
      $x^{\rightsquigarrow_0\rightarrow_0\rightsquigarrow_0}=x^{\rightsquigarrow_0}$. \\  
According to Remark \ref{ps-eq-80}, $B$ is a pseudo BCK(pD)-meet-semilattice and applying Theorem \ref{ps-eq-110} 
it follows that the structure $\Phi(\mathcal{B})=(B, \wedge, \thicksim, \backsim, 0, 1)$ 
is a pointed pseudo equality algebra, where $x\thicksim y=y\rightarrow x$ and $x\backsim y=x\rightsquigarrow y$.\\ 
Obviously from the goodness property of $\mathcal{B}$ it follows that $\Phi(\mathcal{B})$ is $0$-good. \\
Since by Proposition \ref{pps-eq-40}$(10)$, $\rightarrow_0=\thicksim_0$ and $\rightsquigarrow_0=\backsim_0$ we get: \\
$\hspace*{1cm}$
$(x\thicksim y)^{\thicksim_0\backsim_0}=(y\rightarrow x)^{\rightarrow_0\rightsquigarrow_0}=
y^{\rightarrow_0\rightsquigarrow_0} \rightarrow x^{\rightarrow_0\rightsquigarrow_0}= 
x^{\thicksim_0\backsim_0} \thicksim y^{\thicksim_0\backsim_0}$, \\
$\hspace*{1cm}$
$(x\backsim y)^{\thicksim_0\backsim_0}=(x\rightarrow y)^{\rightarrow_0\rightsquigarrow_0}=
x^{\rightarrow_0\rightsquigarrow_0} \rightarrow y^{\rightarrow_0\rightsquigarrow_0}= 
x^{\thicksim_0\backsim_0} \backsim y^{\thicksim_0\backsim_0}$, \\
for all $x, y\in B$. \\ 
Moreover, from $(3)$ and $(4)$ we get 
$(x\wedge y)^{\thicksim_0\backsim_0}=
        x^{\thicksim_0\backsim_0}\wedge y^{\thicksim_0\backsim_0}$,  
$x^{\thicksim_0\backsim_0\thicksim_0}=x^{\thicksim_0}$ and 
      $x^{\backsim_0\thicksim_0\backsim_0}=x^{\backsim_0}$,  
thus $\Phi(\mathcal{B})$ is a $0$-compatible pseudo equality algebra.           
\end{ex}

\begin{prop} \label{pps-eq-130-20} Let $(A, \wedge, \thicksim, \backsim, a, 1)$ be an $a$-compatible  pseudo equality  algebra. Then the map $\gamma:A\longrightarrow A$ defined by $\gamma(x)=x^{\thicksim_a\backsim_a}$ for all $x\in A$ 
is a closure operator on $A$. 
\end{prop}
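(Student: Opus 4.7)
The plan is to verify directly that $\gamma$ satisfies the three defining properties of a closure operator on the poset $(A,\le)$: extensivity ($x\le\gamma(x)$), isotonicity ($x\le y$ implies $\gamma(x)\le\gamma(y)$), and idempotence ($\gamma(\gamma(x))=\gamma(x)$). Each property should follow quickly from what is already established, so this is a matter of pointing to the right ingredient rather than doing any substantial computation.

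First I would dispense with extensivity, which is immediate from Proposition \ref{pps-eq-30}(3): we have $x\le x^{\thicksim_a\backsim_a}=\gamma(x)$ for every $x\in A$, with no use of compatibility needed.

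For isotonicity, suppose $x\le y$, so that $x\wedge y=x$. Apply condition $(C_3)$ of $a$-compatibility to write
\[
\gamma(x)=x^{\thicksim_a\backsim_a}=(x\wedge y)^{\thicksim_a\backsim_a}=x^{\thicksim_a\backsim_a}\wedge y^{\thicksim_a\backsim_a}=\gamma(x)\wedge\gamma(y),
\]
which is exactly $\gamma(x)\le\gamma(y)$. This is the step that forces the use of compatibility.

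For idempotence, I would compute $\gamma(\gamma(x))=(x^{\thicksim_a\backsim_a})^{\thicksim_a\backsim_a}$ by peeling off the $\thicksim_a$ using $(C_4)$: the identity $x^{\thicksim_a\backsim_a\thicksim_a}=x^{\thicksim_a}$ says that $(x^{\thicksim_a\backsim_a})^{\thicksim_a}=x^{\thicksim_a}$, and applying $\backsim_a$ to both sides yields $(x^{\thicksim_a\backsim_a})^{\thicksim_a\backsim_a}=x^{\thicksim_a\backsim_a}=\gamma(x)$. The conclusion then follows. There is no genuinely hard step here; the only mildly delicate point is recognizing that the second clause of $(C_4)$ is what makes idempotence work, whereas the goodness condition alone or extensivity alone would not suffice to collapse the double application.
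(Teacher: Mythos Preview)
Your proof is correct and follows essentially the same route as the paper: extensivity from Proposition~\ref{pps-eq-30}(3), isotonicity from $(C_3)$ applied to $x=x\wedge y$, and idempotence from $(C_4)$ via $x^{\thicksim_a\backsim_a\thicksim_a\backsim_a}=x^{\thicksim_a\backsim_a}$. One small slip: the identity you actually invoke, $x^{\thicksim_a\backsim_a\thicksim_a}=x^{\thicksim_a}$, is the \emph{first} clause of $(C_4)$, not the second; otherwise everything matches the paper's argument.
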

\begin{proof} We have to prove that $\gamma$ satisfies the following conditions for all $x, y\in A$: \\
$(i)$ $x\le \gamma(x)$ (extensive); \\
$(ii)$ $x\le y$ implies $\gamma(x)\le \gamma(y)$ (monotone); \\
$(iii)$ $\gamma(\gamma(x))=\gamma(x)$ (idempotent). \\
Indeed, condition $(i)$ follows from Proposition \ref{pps-eq-30}$(3)$. \\ 
Since $x\le y$, we have $x=x\wedge y$, and applying $(C_3)$ we get: 
$x^{\thicksim_a\backsim_a}=x^{\thicksim_a\backsim_a}\wedge y^{\thicksim_a\backsim_a}$, that is 
$x^{\thicksim_a\backsim_a}\le y^{\thicksim_a\backsim_a}$. It follows that $\gamma(x)\le \gamma(y)$, hence 
$\gamma$ verifies condition $(ii)$. \\
Applying $(C_4)$ we get: 
$\gamma(\gamma(x))=x^{\thicksim_a\backsim_a\thicksim_a\backsim_a}=x^{\thicksim_a\backsim_a}=\gamma(x)$, 
that is $(iii)$. \\
We conclude that $\gamma$ is a closure operator on $A$. 
\end{proof}

\begin{Def} \label{pps-eq-140} Let $(A, \wedge, \thicksim, \backsim, a, 1)$ be a pointed pseudo equality algebra. 
An element $x\in A$ is said to be \emph{$a$-regular} if 
$x^{\thicksim_a\backsim_a}=x^{\backsim_a\thicksim_a}=x$. \\
Denote $\Reg_a(A)$ the set of all $a$-regular elements of $A$.
\end{Def}

\begin{rem} \label{pps-eq-150} Let $(A, \wedge, \thicksim, \backsim, a, 1)$ be a pointed pseudo equality algebra. 
Then: \\
$(1)$ $1, a\in \Reg_a(A)$. \\
$(2)$ $A$ is $^{\thicksim_a\backsim_a}$-involutive iff $A=\Reg_a(A);$ \\
$(3)$ if $A$ is $a$-compatible, then $\Reg_a(A)$ is a subalgebra of $A$. 
\end{rem}

\begin{ex} \label{pps-eq-160} Consider the pseudo equality algebra $(B, \wedge, \thicksim, \backsim, 1)$ 
from Example \ref{ps-ex-50}. \\
We have: $\Reg_0(B)=\{0\}$, $\Reg_a(B)=\{a\}$, $\Reg_b(B)=\{b\}$, $\Reg_1(B)=\{1\}$.
\end{ex}

$\vspace*{5mm}$

\section{Bosbach states on pointed pseudo equality algebras}

In this section we introduce the notion of a Bosbach state on a pointed pseudo equality algebra and we prove that 
any Bosbach state on a pointed pseudo equality algebra is also a Bosbach state on its corresponding pointed 
pseudo BCK(pC)-meet-semilattice. For the case of an invariant pointed pseudo equality algebra, we show that 
the Bosbach states on the two structures coincide. 

\begin{Def} \label{bs-eq-10} Let $(A, \wedge, \thicksim, \backsim, a, 1)$ be a pointed pseudo equality algebra 
with $a\ne 1$.  
A \emph{Bosbach state} on $A$ is a function $s:A\longrightarrow [0,1]$ satisfying the following axioms: \\
$(BS_1)$ $s(x)+s(x\wedge y\thicksim x)=s(y)+s(x\wedge y\thicksim y),$ \\
$(BS_2)$ $s(x)+s(x\backsim x\wedge y)=s(y)+s(y\backsim x\wedge y),$ \\
$(BS_3)$ $s(1)=1$ and $s(a)=0$, \\
for all $x, y\in A$.
\end{Def}

Denote $\mathcal{BS}^{(a)}_{EQA}(A)$ the set of all Bosbach states on a pointed pseudo equality algebra 
$(A, \wedge, \thicksim, \backsim, a, 1)$. 

\begin{prop} \label{bs-eq-20} Let $(A, \wedge, \thicksim, \backsim, a, 1)$ be a pointed pseudo equality algebra and 
$s\in \mathcal{BS}^{(a)}_{EQA}(A)$. Then the following hold for all $x, y\in A$: \\
$(1)$ $s(x\thicksim y)=s(y\backsim x)=1+s(x)-s(y)$ and $s(x)\le s(y)$ whenever $x\le y;$ \\
$(2)$ $s(x)=0$ and $s(x\thicksim a)=s(a\backsim x)=1$ for all $x\le a;$ \\
$(3)$ $s(x^{\thicksim_a})=s(x^{\backsim_a})=1-s(x)$ and  
      $s(x^{\thicksim_a\backsim_a})=s(x^{\backsim_a\thicksim_a})=s(x)$ for all $x\ge a$. 
\end{prop}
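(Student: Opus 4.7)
My plan is to derive all three parts directly from the Bosbach axioms $(BS_1)$--$(BS_3)$, using the identities $x\thicksim x=x\backsim x=1$ from $(A_2)$ and the fact that $s$ takes values in $[0,1]$.

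For part $(1)$, I would substitute the hypothesis $x\le y$ (equivalently $x\wedge y=x$) into $(BS_1)$. The axiom collapses to $s(x)+s(x\thicksim x)=s(y)+s(x\thicksim y)$, and since $x\thicksim x=1$ and $s(1)=1$, rearrangement yields $s(x\thicksim y)=1+s(x)-s(y)$. The analogous reduction of $(BS_2)$ gives $s(y\backsim x)=1+s(x)-s(y)$. Monotonicity $s(x)\le s(y)$ then follows immediately from $s(x\thicksim y)\le 1$.

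For part $(2)$, applying the monotonicity of part $(1)$ to $x\le a$ gives $s(x)\le s(a)=0$, and nonnegativity of $s$ forces $s(x)=0$. Substituting $s(x)=s(a)=0$ into the two formulas of $(1)$ with $y=a$ then gives $s(x\thicksim a)=s(a\backsim x)=1$.

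For part $(3)$, I take $x\ge a$ and apply part $(1)$ with $a$ and $x$ playing the roles of $x$ and $y$, obtaining $s(x^{\thicksim_a})=s(a\thicksim x)=1-s(x)$ and $s(x^{\backsim_a})=s(x\backsim a)=1-s(x)$. To handle the iterated negations $x^{\thicksim_a\backsim_a}=(a\thicksim x)\backsim a$ and $x^{\backsim_a\thicksim_a}=a\thicksim (x\backsim a)$, the key observation is that Proposition \ref{ps-eq-20}$(2)$ applied to $a\le x$ gives $a\le (a\thicksim x)\wedge (x\backsim a)$, so both $a\thicksim x$ and $x\backsim a$ themselves lie above $a$. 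A second application of part $(1)$ then yields $s((a\thicksim x)\backsim a)=1+s(a)-s(a\thicksim x)=1-(1-s(x))=s(x)$, and symmetrically $s(a\thicksim (x\backsim a))=s(x)$.

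The main obstacle I anticipate is in part $(3)$: naively, the formula of $(1)$ only applies when the ``inner'' argument is below the ``outer'' one, so to iterate it I must first verify that $a\thicksim x$ and $x\backsim a$ are themselves $\ge a$. That is exactly where Proposition \ref{ps-eq-20}$(2)$ enters, and without this structural inequality the iteration could not proceed. Apart from this single point, the entire argument is routine substitution into the defining axioms combined with the range condition $s(A)\subseteq [0,1]$.
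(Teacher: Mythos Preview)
Your proposal is correct and follows essentially the same route as the paper's own proof: substitute $x\wedge y=x$ into $(BS_1)$ and $(BS_2)$ for part $(1)$, specialize to $y=a$ for part $(2)$, and reverse the roles for part $(3)$. The one place where you are actually more careful than the paper is in part $(3)$: the paper simply writes ``applying these identities we get $s(x^{\thicksim_a\backsim_a})=s(x^{\backsim_a\thicksim_a})=s(x)$'' without checking that $x^{\thicksim_a}$ and $x^{\backsim_a}$ lie above $a$, whereas you explicitly invoke Proposition~\ref{ps-eq-20}$(2)$ to justify the iteration --- a point the paper leaves implicit.
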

\begin{proof}
$(1)$ Since $x\le y$, from conditions $(BS_1)$ and $(BS_2)$ we get: \\
$\hspace*{2cm}$ $s(x)+1=s(y)+s(x\thicksim y)$, \\
$\hspace*{2cm}$ $s(x)+1=s(y)+s(y\backsim x)$, \\
hence $s(x\thicksim y)=s(y\backsim x)=1+s(x)-s(y)$. \\
It follows that $s(x)-s(y)=s(x\thicksim y)-1\le 0$, thus $s(x)\le s(y)$. \\
$(2)$ Taking $y=a$ in $(1)$ we have: $s(x\thicksim a)=s(a\backsim x)=1+s(x)-s(a)=1+s(x)$. \\
Since $s(x\thicksim a)=s(a\backsim x)\le 1$, we get $s(x)=0$ and $s(x\thicksim a)=s(a\backsim x)=1$. \\
$(3)$ By $(1)$ we have $s(a\thicksim x)=s(x\backsim a)=1+s(a)-s(x)$, that is $s(x^{\thicksim_a})=s(x^{\backsim_a})=1-s(x)$. 
Applying these identities we get $s(x^{\thicksim_a\backsim_a})=s(x^{\backsim_a\thicksim_a})=s(x)$. 
\end{proof}

\begin{prop} \label{bs-eq-30} Let $(A, \wedge, \thicksim, \backsim, a, 1)$ be a pointed pseudo equality algebra and 
$s:A\longrightarrow [0,1]$ be a function such that $s(a)=0$. Then the following are equivalent: \\
$(a)$ $s\in \mathcal{BS}^{(a)}_{EQA}(A);$ \\
$(b)$ $s(y\thicksim x)=s(x\backsim y)=1-s(x)+s(y)$, for all $x, y\in A$ such that $y\le x$.
\end{prop}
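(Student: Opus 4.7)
The plan is to verify the two directions of the equivalence by essentially unwinding the meet operations appearing in $(BS_1)$ and $(BS_2)$.

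For the forward direction $(a)\Rightarrow(b)$, I would simply invoke Proposition \ref{bs-eq-20}$(1)$ after a relabeling. That proposition states that for $x\le y$ one has $s(x\thicksim y)=s(y\backsim x)=1+s(x)-s(y)$; swapping the roles of $x$ and $y$ (so that now $y\le x$) yields $s(y\thicksim x)=s(x\backsim y)=1+s(y)-s(x)=1-s(x)+s(y)$, which is exactly condition $(b)$.

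For the converse $(b)\Rightarrow(a)$, I first note that taking $x=y$ in $(b)$ gives $s(x\thicksim x)=1$, i.e.\ $s(1)=1$, so together with the hypothesis $s(a)=0$ axiom $(BS_3)$ is in place. To verify $(BS_1)$ and $(BS_2)$ for arbitrary $x,y\in A$, I would apply $(b)$ twice: once with the comparable pair $x\wedge y\le x$ and once with $x\wedge y\le y$. From the first pair, $(b)$ gives
\[
s(x\wedge y\thicksim x)=1-s(x)+s(x\wedge y),\qquad s(x\backsim x\wedge y)=1-s(x)+s(x\wedge y),
\]
and symmetrically from the second pair,
\[
s(x\wedge y\thicksim y)=1-s(y)+s(x\wedge y),\qquad s(y\backsim x\wedge y)=1-s(y)+s(x\wedge y).
\]
Adding $s(x)$ (resp.\ $s(y)$) to the left-hand identities and noticing that both sides simplify to $1+s(x\wedge y)$ yields $(BS_1)$; the same computation with the $\backsim$ identities yields $(BS_2)$.

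There is no real obstacle here: both implications are mechanical once one recognizes that $x\wedge y$ is comparable to both $x$ and $y$, which is precisely what makes hypothesis $(b)$ applicable. The only thing to be careful about is the direction of the inequality in Proposition \ref{bs-eq-20}$(1)$ versus the statement of $(b)$, which differ only by a relabeling of variables.
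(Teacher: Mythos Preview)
Your proof is correct and follows essentially the same approach as the paper. For $(a)\Rightarrow(b)$ the paper appeals directly to $(BS_1)$ and $(BS_2)$ rather than citing Proposition~\ref{bs-eq-20}$(1)$, but since that proposition is itself an immediate consequence of $(BS_1)$ and $(BS_2)$ this is the same argument; your treatment of $(b)\Rightarrow(a)$, including the derivation of $s(1)=1$ from the case $x=y$, matches the paper's proof exactly.
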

\begin{proof}
$(a)\Rightarrow (b)$ It follows from $(BS_1)$ and $(BS_2)$. \\
$(b)\Rightarrow (a)$ Applying $(b)$, since $x\wedge y\le x$ and $x\wedge y\le y$, we get: \\
$\hspace*{2cm}$ $s(x\wedge y\thicksim x)=s(x\backsim x\wedge y)=1-s(x)+s(x\wedge y)$ \\
$\hspace*{2cm}$ $s(x\wedge y\thicksim y)=s(y\backsim x\wedge y)=1-s(y)+s(x\wedge y)$. \\
It follows that: \\
$\hspace*{1cm}$ $s(x)+s(x\wedge y\thicksim x)=s(x)+1-s(x)+s(x\wedge y)=s(y)+1-s(y)+s(x\wedge y)$ \\
$\hspace*{4.4cm}$ $=s(y)+s(x\wedge y\thicksim y)$ \\
$\hspace*{1cm}$ $s(x)+s(x\backsim x\wedge y)=s(x)+1-s(x)+s(x\wedge y)=s(y)+1-s(y)+s(x\wedge y)$ \\
$\hspace*{4.4cm}$ $=s(y)+s(y\backsim x\wedge y)$, \\
that is, $(BS_1)$ and $(BS_2)$. \\
Moreover, by $(b)$ we have $s(1)=s(x\thicksim x)=1-s(x)+s(x)=1$. \\
We conclude that $s\in \mathcal{BS}^{(a)}_{EQA}(A)$. 
\end{proof}

The Bosbach states on bounded pseudo BCK-algebras have been investigated in \cite{Ciu6}. 
We extend this notion to the case of pointed pseudo BCK-algebras. 

\begin{Def} \label{bs-eq-40}
Let $(B, \wedge, \rightarrow, \rightsquigarrow, a, 1)$ be a pointed pseudo BCK-algebra with $a\ne 1$. 
A \emph{Bosbach state} on $B$ is a function $s:B\longrightarrow [0,1]$ satisfying the following axioms: \\
$(BS_1^{\prime})$ $s(x)+s(x\rightarrow y)=s(y)+s(y\rightarrow x);$ \\
$(BS_2^{\prime})$ $s(x)+s(x\rightsquigarrow y)=s(y)+s(y\rightsquigarrow x);$ \\
$(BS_3^{\prime})$ $s(1)=1$ and $s(a)=0$, \\
for all $x, y\in B$. 
\end{Def}

Denote $\mathcal{BS}^{(a)}_{BCK}(B)$ the set of all Bosbach states on a pointed pseudo BCK-algebra 
$(B, \wedge, \rightarrow, \rightsquigarrow, a, 1)$. 

\begin{prop} \label{bs-eq-50} For any pointed pseudo equality algebra $(A, \wedge, \thicksim, \backsim, a, 1)$, 
$\mathcal{BS}^{(a)}_{EQA}(A)\subseteq \mathcal{BS}^{(a)}_{BCK}(\Psi(A))$. 
\end{prop}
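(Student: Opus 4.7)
The plan is essentially a translation of axioms via the functor $\Psi$, so no hard work is required. Let me outline the straightforward unfolding.

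Fix $s\in \mathcal{BS}^{(a)}_{EQA}(A)$. I need to verify that $s$, viewed as a function on the underlying set of $\Psi(A)=(A,\wedge,\rightarrow,\rightsquigarrow,1)$, satisfies the three axioms $(BS_1'),(BS_2'),(BS_3')$ of Definition \ref{bs-eq-40}. The underlying set and constant $a$ do not change under $\Psi$, so $(BS_3')$ is immediate from $(BS_3)$.

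For $(BS_1')$, I unfold the definition of the derived implication: by Theorem \ref{ps-eq-110}$(1)$ we have $x\rightarrow y=x\wedge y\thicksim x$ and, using commutativity of $\wedge$, $y\rightarrow x=y\wedge x\thicksim y=x\wedge y\thicksim y$. Substituting these two equalities into the desired identity $s(x)+s(x\rightarrow y)=s(y)+s(y\rightarrow x)$ produces exactly axiom $(BS_1)$, which $s$ satisfies by assumption. Symmetrically, $x\rightsquigarrow y=x\backsim x\wedge y$ and $y\rightsquigarrow x=y\backsim y\wedge x=y\backsim x\wedge y$, so $(BS_2')$ reduces to $(BS_2)$.

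There is really no obstacle here: the functor $\Psi$ is designed so that the pair of equality operations is encoded into a pair of implications in such a way that the Bosbach identities on $A$ and on $\Psi(A)$ are formally the same pair of equations. The only subtlety worth flagging in the write-up is the use of the commutativity of $\wedge$ to rewrite $y\wedge x$ as $x\wedge y$ when matching $y\rightarrow x$ with the right-hand side of $(BS_1)$ (and similarly for $\rightsquigarrow$).
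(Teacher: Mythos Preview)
Your proof is correct and follows essentially the same approach as the paper: both simply substitute the definitions $x\rightarrow y=x\wedge y\thicksim x$ and $x\rightsquigarrow y=x\backsim x\wedge y$ (and their swapped versions) into $(BS_1'),(BS_2')$ to recover $(BS_1),(BS_2)$, with $(BS_3')$ identical to $(BS_3)$. Your explicit mention of the commutativity of $\wedge$ is a nice touch that the paper leaves implicit.
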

\begin{proof} Let $s\in \mathcal{BS}^{(a)}_{EQA}(A)$. 
Replacing $x\wedge y\thicksim x=x\rightarrow y$, $x\wedge y\thicksim y=y\rightarrow x$ and 
$x\backsim x\wedge y=x\rightsquigarrow y$, $y\backsim x\wedge y=y\rightsquigarrow x$ in $(BS_1)$ and $(BS_2)$, respectively, we get $(BS_1^{\prime})$ and $(BS_2^{\prime})$ for $\Psi(A)$.
Hence $\mathcal{BS}^{(a)}_{EQA}(A)\subseteq \mathcal{BS}^{(a)}_{BCK}(\Psi(A))$.
\end{proof} 

\begin{theo} \label{bs-eq-60} For any pointed invariant pseudo equality algebra 
$(A, \wedge, \thicksim, \backsim, a, 1)$, \\ 
$\mathcal{BS}^{(a)}_{EQA}(A)=\mathcal{BS}^{(a)}_{BCK}(\Psi(A))$. 
\end{theo}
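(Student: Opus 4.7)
The strategy is to establish the reverse inclusion $\mathcal{BS}^{(a)}_{BCK}(\Psi(A))\subseteq\mathcal{BS}^{(a)}_{EQA}(A)$; combined with Proposition~\ref{bs-eq-50}, this will yield the claimed equality. The fundamental observation is that, under the defining identities of $\Psi$ recorded in Theorem~\ref{ps-eq-110}, the Bosbach-state axioms on $A$ and on $\Psi(A)$ are literally the same equations written in different notation, so the substitution argument of Proposition~\ref{bs-eq-50} reverses without modification.

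Given $s\in\mathcal{BS}^{(a)}_{BCK}(\Psi(A))$, I would verify each of $(BS_1)$, $(BS_2)$, $(BS_3)$ of Definition~\ref{bs-eq-10} in turn. Substituting $x\rightarrow y=x\wedge y\thicksim x$ and, analogously, $y\rightarrow x=y\wedge x\thicksim y=x\wedge y\thicksim y$ into $(BS_1')$ produces exactly $(BS_1)$; substituting $x\rightsquigarrow y=x\backsim x\wedge y$ and $y\rightsquigarrow x=y\backsim y\wedge x=y\backsim x\wedge y$ into $(BS_2')$ produces exactly $(BS_2)$. Axiom $(BS_3)$ (namely $s(1)=1$ and $s(a)=0$) is identical in both definitions and transfers immediately.

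The role of the invariance hypothesis is primarily conceptual: by Theorem~\ref{ps-eq-140}(3), invariance is precisely the condition $\Phi(\Psi(A))=A$, so the equality-algebra structure of $A$ is fully recoverable from the pseudo BCK(pC) structure of $\Psi(A)$, and the transfer of states between the two structures becomes completely symmetric. I do not anticipate a substantive obstacle, since the argument reduces to unwinding definitions. The only point requiring care is the bookkeeping of argument order: in the invariant setting Proposition~\ref{ps-eq-150} yields $x\thicksim y=x\wedge y\thicksim y$ and $x\backsim y=x\backsim x\wedge y$, so one must be alert to the fact that $y\rightarrow x$ on the right-hand side of $(BS_1')$ must be matched with $x\wedge y\thicksim y$ on the right-hand side of $(BS_1)$ rather than with $x\wedge y\thicksim x$, and similarly for $(BS_2')$ versus $(BS_2)$.
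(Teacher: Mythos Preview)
Your argument is correct and, in fact, slightly cleaner than the paper's. Both proofs obtain the forward inclusion from Proposition~\ref{bs-eq-50} and then verify the reverse inclusion by translating $(BS_1')$, $(BS_2')$ back into $(BS_1)$, $(BS_2)$. The paper, however, makes an explicit detour through invariance: it first invokes Proposition~\ref{ps-eq-150} to rewrite $x\rightarrow y$ as $y\thicksim x$ and $x\rightsquigarrow y$ as $x\backsim y$, obtaining the intermediate identities $s(x)+s(y\thicksim x)=s(y)+s(x\thicksim y)$ and $s(x)+s(x\backsim y)=s(y)+s(y\backsim x)$, and then applies Proposition~\ref{ps-eq-150} once more to recover the $x\wedge y$ form of $(BS_1)$ and $(BS_2)$. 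You instead substitute the defining identities $x\rightarrow y=x\wedge y\thicksim x$ and $x\rightsquigarrow y=x\backsim x\wedge y$ from Theorem~\ref{ps-eq-110} directly, which lands on $(BS_1)$ and $(BS_2)$ in one step. Your remark that invariance plays only a ``conceptual'' role is therefore on target: the reverse substitution of Proposition~\ref{bs-eq-50} works verbatim for an arbitrary pointed pseudo equality algebra, so the equality $\mathcal{BS}^{(a)}_{EQA}(A)=\mathcal{BS}^{(a)}_{BCK}(\Psi(A))$ in fact holds without the invariance hypothesis.
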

\begin{proof} Acording to Proposition \ref{bs-eq-50}, we have 
$\mathcal{BS}^{(a)}_{EQA}(A)\subseteq \mathcal{BS}^{(a)}_{BCK}(\Psi(A))$ for any pointed pseudo equality algebra 
$(A, \wedge, \thicksim, \backsim, a, 1)$. \\
Let $A$ be a pointed invariant pseudo equality algebra and let $s\in \mathcal{BS}^{(a)}_{BCK}(\Psi(A))$. \\
Taking into consideration that $x\rightarrow y=y\thicksim x$ and $x\rightsquigarrow y=x\backsim y$,  
for all $x, y\in A$, axioms $(BS_1^{\prime})$ and $(BS_2^{\prime})$ become: \\
$\hspace*{3cm}$ $s(x)+s(y\thicksim x)=s(y)+s(x\thicksim y)$ and \\
$\hspace*{3cm}$ $s(x)+s(x\backsim y)=s(y)+s(y\backsim x)$, \\
respectively. \\
Applying Proposition \ref{ps-eq-150}, we get: \\ 
$\hspace*{3cm}$ $s(x)+s(x\wedge y\thicksim x)=s(y)+s(x\wedge y\thicksim y)$ and \\
$\hspace*{3cm}$ $s(x)+s(x\backsim x\wedge y)=s(y)+s(y\backsim x\wedge y)$, \\
for all $x, y\in A$. 
It follows that axioms $(BS_1)$ and $(BS_2)$ are satisfied. \\
Since $(BS_3)$ is the same as $(BS_3^{\prime})$, it follows that $s\in \mathcal{BS}^{(a)}_{EQA}(A)$. \\
We conclude that $\mathcal{BS}^{(a)}_{EQA}(A)=\mathcal{BS}^{(a)}_{BCK}(\Psi(A))$.
\end{proof}

\begin{ex} \label{bs-eq-80} Consider the invariant pointed pseudo equality algebra 
$(B, \wedge, \thicksim, \backsim, \alpha, 1)$ from Example \ref{ps-ex-50}, with $\alpha\in \{0, a, b\}$. 
Define $s_u:B\longrightarrow [0, 1]$, by $s(0)=0$, $s(a)=u$, $s(b)=1-u$, $s(1)=1$, where $u\in [0, 1]$.   
Then $\mathcal{BS}^{(0)}_{EQA}(B)=\mathcal{BS}^{(0)}_{BCK}(\Psi(B))=\{s_u\mid u\in [0, 1]\}$.
Moreover, $\mathcal{BS}^{(a)}_{EQA}(B)=\mathcal{BS}^{(a)}_{BCK}(\Psi(B))=\{s_0\}$ and 
$\mathcal{BS}^{(b)}_{EQA}(B)=\mathcal{BS}^{(b)}_{BCK}(\Psi(B))=\{s_1\}$.
\end{ex}

$\vspace*{5mm}$

\section{States pseudo equality algebras}

In this section we define and study two types of internal states on pseudo equality algebras and their 
corresponding pseudo BCK(pC)-meet-semilattices, and we investigate the connections between the internal states on the two structures. We prove that any internal state on a pseudo equality algebra is also an internal state on its corresponding pseudo BCK(pC)-meet-semilattice, and we prove the converse for the case of linearly ordered symmetric pseudo equality algebras. We also show that any internal state on a pseudo BCK(pC)-meet-semilattice is also an internal state on its corresponding pseudo equality algebra.

\begin{Def} \label{is-eq-10} Let $(A,\wedge,\thicksim,\backsim,1)$ be a pseudo equality algebra and 
$\sigma:A \longrightarrow A$ be a unary operator on $A$. For all $x, y\in A$ consider the following axioms: \\ 
$(IS_1)$ $\sigma (x)\le \sigma (y)$, whenever $x\le y;$ \\
$(IS_2)$ $\sigma(x\wedge y\thicksim x)=\sigma(y)\thicksim \sigma((x\wedge y\thicksim x) \backsim y)$ and 
         $\sigma(x\backsim x\wedge y)=\sigma(y\thicksim (x\backsim x\wedge y)) \backsim \sigma(y);$ \\
$(IS^{'}_2)$ $\sigma(x\wedge y\thicksim x)=\sigma(y)\thicksim \sigma((x\wedge y\thicksim y) \backsim x)$ and 
             $\sigma(x\backsim x\wedge y)=\sigma(x\thicksim (y\backsim x\wedge y)) \backsim \sigma(y);$ \\
$(IS_3)$ $\sigma(\sigma (x) \thicksim \sigma (y))= \sigma (x) \thicksim \sigma (y)$ and 
         $\sigma(\sigma (x) \backsim \sigma (y))= \sigma (x) \backsim \sigma (y);$ \\
$(IS_4)$ $\sigma(\sigma (x) \wedge \sigma (y))= \sigma (x) \wedge \sigma (y)$. \\
Then: \\
$(i)$ $\sigma$ is called an \emph{internal state of type I} or a \emph{state operator of type I} or 
a \emph{type I state operator} if it satisfies axioms $(IS_1)$, $(IS_2)$, $(IS_3), (IS_4);$ \\    
$(ii)$ $\sigma$ is called an \emph{internal state of type II} or a \emph{state operator of type II} or a 
\emph{type II state operator} if it satisfies axioms $(IS_1)$, $(IS^{'}_2)$, $(IS_3), (IS_4)$. \\
The structure $(A,\wedge,\thicksim,\backsim,\sigma,1)$ ($(A,\sigma)$, for short) is called a 
\emph{state pseudo equality algebra of type I (type II)}, respectively.
\end{Def}

Denote $\mathcal{IS}_{EQA}^{(I)}(A)$ and $\mathcal{IS}_{EQA}^{(II)}(A)$ the set of all internal states of 
type I and II on a pseudo equality algebra $A$, respectively. \\
For $\sigma \in \mathcal{IS}_{EQA}^{(I)}(A)$ or $\sigma \in \mathcal{IS}_{EQA}^{(II)}(A)$, 
$\Ker(\sigma)=\{x\in A \mid \sigma(x)=1\}$  is called the \emph{kernel} of $\sigma$. \\
An internal state $\sigma$ on $A$ is said to be \emph{strong} if it satisfies the condition: \\
$(IS_5)$ $\sigma(x\thicksim y)=\sigma (x\backsim y)$ for all $x, y \in A$. 

\begin{ex} \label{is-eq-20} Let $(A,\wedge,\thicksim,\backsim,1)$ be a pseudo equality algebra and 
${\bf 1}_A, \Id_A:A\longrightarrow A$, defined by ${\bf 1}_A(x)=1$ and $\Id_A(x)=x$ for all $x\in A$. Then: \\
$(1)$ ${\bf 1}_A\in \mathcal{IS}_{EQA}^{(I)}(A), \mathcal{IS}_{EQA}^{(II)}(A);$ \\
$(2)$ $\Id_A \in \mathcal{IS}_{EQA}^{(I)}(A)$ (it is obvious that $(IS_1)$, $(IS_3)$ and $(IS_4)$ are verified, 
while $(IS_2)$ follows by Proposition \ref{ps-eq-30}).
\end{ex}

\begin{ex}\label{is-eq-30} Let $(A_1, \wedge_1, \thicksim_1, \backsim_1, 1_1)$ and 
$(A_2, \wedge_2, \thicksim_2, \backsim_2, 1_2)$ be two pseudo equality algebras and $A$ be the pseudo equality 
algebra defined in Example \ref{ps-ex-70}. 
Let $\sigma_1:A_1\longrightarrow A_1$ and $\sigma_2:A_2\longrightarrow A_2$ be internal states of type I (type II) 
on $A_1$ and $A_2$, respectively. Then the map $\sigma:A\longrightarrow A$ defined by $\sigma(x,y)=(\sigma_1(x),\sigma_2(y))$, for all $(x,y)\in A$ is an intenal state of type I (type II) on $A$.
\end{ex}

\begin{rems} \label{is-eq-40} 
$(1)$ If $(A,\wedge,\thicksim,\backsim,1)$ is a commutative pseudo equality algebra, 
then $\mathcal{IS}_{EQA}^{(I)}(A)=\mathcal{IS}_{EQA}^{(II)}(A)$. \\
$(2)$ In general, $\mathcal{IS}_{EQA}^{(I)}(A)\ne \mathcal{IS}_{EQA}^{(II)}(A)$. \\
Indeed, let $A=\{0,a,b,1\}$ with $0<a<b<1$ and consider the operation $\thicksim$ given by the following table: 
\[
\hspace{10mm}
\begin{array}{c|ccccc}
\thicksim& 0 & a & b & 1 \\ \hline
0 & 1 & a & 0 & 0 \\ 
a & a & 1 & a & a \\ 
b & 0 & a & 1 & b \\  
1 & 0 & a & b & 1
\end{array}
.
\]

One can easily check that $\mathcal{A}=(A, \wedge, \thicksim, 1)$ is a linearly ordered equality algebra 
(see \cite{Ciu5}). \\ 
Since $a\wedge b\thicksim a=1\ne b=b\thicksim ((a\wedge b\backsim b)\thicksim a)$, it follows that 
$\Id_A\notin \mathcal{IS}_{EQA}^{(II)}(A)$, that is $\mathcal{IS}_{EQA}^{(I)}(A)\ne \mathcal{IS}_{EQA}^{(II)}(A)$.
\end{rems}

\begin{ex} \label{is-eq-50} Consider the commutative pseudo equality algebra $(B, \wedge, \thicksim, \backsim, 1)$ 
from Example \ref{ps-ex-50} and the maps $\sigma_i:B\longrightarrow B$, $i=1,\cdots,6$ given in the table below:
\[
\begin{array}{c|ccccc}
 x & 0 & a & b & 1 \\ \hline
\sigma_1(x) & 0 & 0 & 1 & 1 \\
\sigma_2(x) & 0 & a & b & 1 \\ 
\sigma_3(x) & 0 & 1 & 0 & 1 \\ 
\sigma_4(x) & a & a & 1 & 1 \\
\sigma_5(x) & b & 1 & b & 1 \\
\sigma_6(x) & 1 & 1 & 1 & 1  
\end{array}
. 
\]
Then $\mathcal{IS}_{EQA}^{(I)}(B)=\mathcal{IS}_{EQA}^{(II)}(B)=
\{\sigma_1, \sigma_2, \sigma_3, \sigma_4, \sigma_5, \sigma_6\}$. 
This is in accordance with Remark \ref{is-eq-40}$(1)$.
\end{ex}

\begin{prop} \label{is-eq-60} If $(A,\sigma)$ is a state pseudo equality algebra of type I or type II, 
then for all $x, y\in A$ the following hold:\\
$(1)$ $\sigma(1)=1;$ \\
$(2)$ $\sigma(\sigma(x))=\sigma(x);$ \\
$(3)$ $\sigma(A)=\{x\in A \mid x=\sigma(x)\};$ \\ 
$(4)$ $\sigma(A)$ is a subalgebra of $A$. 
\end{prop}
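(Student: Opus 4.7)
The four claims are formal consequences of the axioms $(IS_3)$ and $(IS_4)$ (together with $(A_2)$ and $(A_3)$), and the proof is the same regardless of whether $\sigma$ is of type I or type II, since neither $(IS_2)$ nor $(IS_2')$ is used. My plan is to derive (1), then (2) from (1), then read off (3) and (4) as short corollaries.

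For (1), I would start from reflexivity $(A_2)$: $\sigma(x)\thicksim\sigma(x)=1$. Applying $(IS_3)$ to the left side gives $\sigma(1)=\sigma(\sigma(x)\thicksim\sigma(x))=\sigma(x)\thicksim\sigma(x)=1$, which settles the first item. For (2), I would apply $(IS_3)$ with $y$ replaced by $1$ in the expression $\sigma(x)\thicksim\sigma(y)$, obtaining $\sigma(\sigma(x)\thicksim\sigma(1))=\sigma(x)\thicksim\sigma(1)$. By (1) we have $\sigma(1)=1$, and $(A_3)$ gives $\sigma(x)\thicksim 1=\sigma(x)$, so this collapses to $\sigma(\sigma(x))=\sigma(x)$.

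For (3), the inclusion $\{x\mid x=\sigma(x)\}\subseteq\sigma(A)$ is immediate. Conversely, any $x\in\sigma(A)$ has the form $x=\sigma(y)$ for some $y\in A$, and applying (2) yields $\sigma(x)=\sigma(\sigma(y))=\sigma(y)=x$. Finally for (4), I would verify the four closure conditions for $\sigma(A)$ using the characterization in (3): given $x,y\in\sigma(A)$, that is $x=\sigma(x)$ and $y=\sigma(y)$, the identity $(IS_4)$ gives $\sigma(x\wedge y)=\sigma(\sigma(x)\wedge\sigma(y))=\sigma(x)\wedge\sigma(y)=x\wedge y$, so $x\wedge y\in\sigma(A)$; the two parts of $(IS_3)$ similarly give $x\thicksim y,x\backsim y\in\sigma(A)$; and $1\in\sigma(A)$ by (1).

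There is no real obstacle here — the only nuance is the correct application of $(IS_3)$ with $y=1$ in step (2), where one must invoke (1) before (2) so that $\sigma(1)$ can be replaced by $1$ inside the equality operation before $(A_3)$ is used. The argument uses neither the meet-semilattice structure beyond what $(IS_4)$ already gives, nor axiom $(IS_1)$, so the proposition indeed holds uniformly for both types of state operators.
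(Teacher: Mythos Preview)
Your proof is correct and matches the paper's own argument almost verbatim: both derive $(1)$ from $(IS_3)$ with $y=x$, then $(2)$ from $(IS_3)$ with $y=1$ together with $(1)$ and $(A_3)$, and handle $(3)$ and $(4)$ exactly as you do via the fixed-point description and the closure given by $(IS_3)$, $(IS_4)$. The only cosmetic difference is that in $(4)$ the paper phrases closure by taking arbitrary $x,y\in A$ and noting $\sigma(x)\thicksim\sigma(y)\in\sigma(A)$, whereas you start from $x=\sigma(x)$, $y=\sigma(y)$; the content is identical.
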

\begin{proof}
$(1)$ It follows from $(IS_3)$ for $y=x;$ \\
$(2)$ Applying $(IS_3)$ we get:\\ 
$\hspace*{1cm}$
$\sigma(\sigma(x))=\sigma(\sigma(x)\thicksim 1)=\sigma(\sigma(x)\thicksim \sigma(1))=
\sigma(x)\thicksim \sigma(1)=\sigma (x)\thicksim 1=\sigma (x)$. \\
$(3)$ Clearly $\{x\in A \mid x=\sigma(x)\} \subseteq \sigma(A)$. 
Let $x\in \sigma(A)$, that is there exists $x_1\in A$ such that $x=\sigma(x_1)$. 
It follows that $x=\sigma(x_1)=\sigma(\sigma(x_1))=\sigma(x)$, that is $x\in \sigma(A)$. \\
Thus $\sigma(A) \subseteq \{x\in A \mid x=\sigma(x)\}$ and we  conclude that $\sigma(A)=\{x\in A \mid x=\sigma(x)\}$. \\
$(4)$ By $(1)$, $1\in \sigma(A)$. Let $x, y\in A$. Then by $(IS_3)$ and $(IS_4)$ it follows that 
$\sigma(x)\thicksim \sigma(y), \sigma(x)\backsim \sigma(y),  \sigma(x)\wedge \sigma(y)\in \sigma(A)$, that is 
$\sigma(A)$ is a subalgebra of $A$. 
\end{proof}

\begin{prop} \label{is-eq-70} Let $(A,\sigma)$ be a state pseudo equality algebra of type I or type II 
and $x, y\in A$ such that $y\le x$. Then the following hold: \\
$(1)$ $\sigma(y\thicksim x)\le \sigma(y)\thicksim \sigma (x)$ and 
      $\sigma(x\backsim  y)\le \sigma(x)\backsim  \sigma (y);$ \\
$(2)$ $\sigma(y)\thicksim \sigma (x) = \sigma(x)\rightarrow \sigma (y)$ and 
      $\sigma(x)\backsim \sigma (y) = \sigma(x)\rightsquigarrow\sigma (y);$ \\
$(3)$ $\sigma(x \rightarrow y) \le \sigma(x)\rightarrow \sigma (y)$ and 
      $\sigma(x \rightsquigarrow y) \le  \sigma(x)\rightsquigarrow \sigma (y)$. 
\end{prop}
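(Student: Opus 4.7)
The plan is to prove the three parts in the order (2), (1), (3). Part (2) is a direct unwinding of definitions, part (1) is the technical core and splits by the type of state operator, and part (3) then follows by routine translation.

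For (2), the key ingredient is monotonicity $(IS_1)$: since $y\le x$ we obtain $\sigma(y)\le\sigma(x)$, hence $\sigma(x)\wedge\sigma(y)=\sigma(y)$. Substituting this into the definitions $u\rightarrow v=(u\wedge v)\thicksim u$ and $u\rightsquigarrow v=u\backsim(u\wedge v)$ with $u=\sigma(x)$, $v=\sigma(y)$ immediately gives both identities $\sigma(y)\thicksim\sigma(x)=\sigma(x)\rightarrow\sigma(y)$ and $\sigma(x)\backsim\sigma(y)=\sigma(x)\rightsquigarrow\sigma(y)$.

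For (1), I first use $y\le x$ to rewrite $y\thicksim x=(x\wedge y)\thicksim x$ and $x\backsim y=x\backsim(x\wedge y)$, so both sides match the arguments of $(IS_2)$ and $(IS^{'}_2)$. The type II case is essentially immediate: $(x\wedge y)\thicksim y=y\thicksim y=1$ and $y\backsim(x\wedge y)=y\backsim y=1$ by $(A_2)$, so by $(A_3)$ the right-hand sides of $(IS^{'}_2)$ collapse to $\sigma(y)\thicksim\sigma(x)$ and $\sigma(x)\backsim\sigma(y)$, yielding equality (even stronger than the claimed inequality). The type I case is where I expect the main work. From $(IS_2)$ I get
\[
\sigma(y\thicksim x)=\sigma(y)\thicksim\sigma\bigl((y\thicksim x)\backsim y\bigr),\qquad
\sigma(x\backsim y)=\sigma\bigl(y\thicksim(x\backsim y)\bigr)\backsim\sigma(y).
\]
To control these, I invoke Proposition \ref{ps-eq-20}(3), which gives $x\le(y\thicksim x)\backsim y$ and $x\le y\thicksim(x\backsim y)$; applying $(IS_1)$ produces the chains $\sigma(y)\le\sigma(x)\le\sigma((y\thicksim x)\backsim y)$ and $\sigma(y)\le\sigma(x)\le\sigma(y\thicksim(x\backsim y))$. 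Now $(A_4)$ applied to the first chain gives $\sigma(y)\thicksim\sigma((y\thicksim x)\backsim y)\le\sigma(y)\thicksim\sigma(x)$, and applied to the second chain gives $\sigma(y\thicksim(x\backsim y))\backsim\sigma(y)\le\sigma(x)\backsim\sigma(y)$, closing the argument.

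Part (3) is then a corollary. Since $y\le x$ forces $x\wedge y=y$, the implications reduce to $x\rightarrow y=y\thicksim x$ and $x\rightsquigarrow y=x\backsim y$, so combining (1) with (2) yields $\sigma(x\rightarrow y)=\sigma(y\thicksim x)\le\sigma(y)\thicksim\sigma(x)=\sigma(x)\rightarrow\sigma(y)$, and analogously for $\rightsquigarrow$. The main obstacle lies in the type I case of (1): one has to notice that Proposition \ref{ps-eq-20}(3) supplies exactly the intermediate term needed to insert $\sigma(x)$ between $\sigma(y)$ and the outer term, and then to apply the correct clauses of $(A_4)$ in the correct direction (antitonicity in the first argument of $\thicksim$ and in the second argument of $\backsim$), since the wrong clause would give a useless inequality.
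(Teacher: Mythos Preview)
Your proof is correct and follows essentially the same structure as the paper: part (2) by unwinding definitions using $(IS_1)$, part (1) by case-splitting on the type and invoking the state axioms together with Proposition~\ref{ps-eq-20} and $(A_4)$, and part (3) as a corollary. The one noteworthy difference is your handling of the type~II case of (1): the paper treats both types symmetrically, using $y\le x\le (x\wedge y\thicksim y)\backsim x$ from Proposition~\ref{ps-eq-20}(3) together with $(A_4)$ to bound the right-hand side of $(IS^{'}_2)$; you instead observe that when $y\le x$ the inner terms $(x\wedge y)\thicksim y$ and $y\backsim(x\wedge y)$ collapse to $1$ by $(A_2)$, so $(IS^{'}_2)$ directly yields \emph{equality} $\sigma(y\thicksim x)=\sigma(y)\thicksim\sigma(x)$ and $\sigma(x\backsim y)=\sigma(x)\backsim\sigma(y)$ without any appeal to $(A_4)$ or Proposition~\ref{ps-eq-20}. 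This is a genuine simplification and in fact a sharper conclusion for type~II operators.
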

\begin{proof} 
$(1)$ We will apply Proposition \ref{ps-eq-20} and consider two cases. \\ 
$(I)$ Suppose that $\sigma\in \mathcal{IS}_{EQA}^{(I)}(A)$. \\ 
From $y\le x\le (x\wedge y\thicksim x)\backsim y$ it follows that 
$\sigma(y)\le \sigma(x)\le \sigma((x\wedge y\thicksim x)\backsim y)$. \\
Applying $(A_4)$ and $(IS_2)$ we get 
$\sigma(y)\thicksim \sigma((x\wedge y\thicksim x)\backsim y)\le \sigma(y)\thicksim \sigma (x)$, 
that is $\sigma(x\wedge y\thicksim x)\le \sigma(y)\thicksim \sigma (x)$, so that  
$\sigma(y\thicksim x)\le \sigma(y)\thicksim \sigma (x)$. \\
From $y\le x\le y\thicksim (x\backsim x\wedge y)$ it follows that 
$\sigma(y)\le \sigma(x)\le \sigma(y\thicksim (x\backsim x\wedge y))$. \\
Applying $(A_4)$ and $(IS_2)$ we get 
$\sigma(y\thicksim (x\backsim x\wedge y))\backsim \sigma(y)\le \sigma(x)\backsim \sigma (y)$, that is \\ 
$\sigma(x\backsim x\wedge y)\le \sigma(x)\backsim \sigma (y)$, so that  
$\sigma(x\backsim y)\le \sigma(x)\backsim \sigma (y)$. \\
$(II)$ Suppose that $\sigma\in \mathcal{IS}_{EQA}^{(II)}(A)$. \\
From $y\le x\le (x\wedge y\thicksim y)\backsim x$ it follows that 
$\sigma(y)\le \sigma(x)\le \sigma((x\wedge y\thicksim y)\backsim x)$. \\
Applying $(A_4)$ and $(IS^{'}_2)$ we get 
$\sigma(y)\thicksim \sigma((x\wedge y\thicksim y)\backsim x)\le \sigma(y)\thicksim \sigma (x)$, 
that is $\sigma(x\wedge y\thicksim x)\le \sigma(y)\thicksim \sigma (x)$, so that  
$\sigma(y\thicksim x)\le \sigma(y)\thicksim \sigma (x)$. \\
From $y\le x\le x\thicksim (y\backsim x\wedge y)$ it follows that 
$\sigma(y)\le \sigma(x)\le \sigma(x\thicksim (y\backsim x\wedge y))$. \\
Applying $(A_4)$ and $(IS^{'}_2)$ we get 
$\sigma(x\thicksim (y\backsim x\wedge y))\backsim \sigma(y)\le \sigma(x)\backsim \sigma (y)$, that is \\ 
$\sigma(x\backsim x\wedge y)\le \sigma(x)\backsim \sigma (y)$, so that  
$\sigma(x\backsim y)\le \sigma(x)\backsim \sigma (y)$. \\
$(2)$ Since $y\le x$, we have $\sigma(y)\le \sigma(x)$ and 
$\sigma(x)\rightarrow \sigma (y)=\sigma(x)\wedge \sigma (y)\thicksim \sigma(x)=\sigma(y)\thicksim \sigma (x)$. \\ 
Similarly $\sigma(x)\rightsquigarrow\sigma (y)=\sigma(x)\backsim \sigma(x)\wedge \sigma (y)= 
\sigma(x)\backsim \sigma (y)$. \\
$(3)$ Applying $(1)$ and $(2)$ we get: \\
$\sigma(x \rightarrow y) = \sigma(x\wedge y \thicksim x)=\sigma(y\thicksim x)\le 
\sigma(y)\thicksim \sigma (x) =\sigma (x)\wedge \sigma(y)\thicksim \sigma (x) \le \sigma(x)\rightarrow \sigma (y)$. \\ 
$\sigma(x \rightsquigarrow y)=\sigma(x\backsim x\wedge y)=\sigma(x\backsim y)\le 
\sigma(x)\backsim \sigma(y)=\sigma(x)\backsim \sigma(x)\wedge \sigma(y)=\sigma(x)\rightsquigarrow \sigma (y)$.
\end{proof}

\begin{prop} \label{is-eq-80} If $(A,\sigma)$ is a state pseudo equality algebra of type I or type II, then 
the following hold for all $x, y\in A$:\\
$(1)$ $\Ker(\sigma)\cap \Img(\sigma)=\{1\};$ \\
$(2)$ $\Ker(\sigma)\in {\mathcal DS}(A);$ \\
$(3)$ if $A$ is invariant, then $\Ker(\sigma)$ is a subalgebra of $A;$ \\
$(4)$ if $\sigma$ is strong, then $\Ker(\sigma)\in {\mathcal DS}_n(A)$. 
\end{prop}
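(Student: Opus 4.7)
\emph{Proof plan.} Part (1) is essentially bookkeeping: if $x\in\Ker(\sigma)\cap\Img(\sigma)$, then Proposition~\ref{is-eq-60}(3) gives $\sigma(x)=x$, while $\sigma(x)=1$ forces $x=1$.

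For (2) I verify the three conditions $(DS_1),(DS_2),(DS_3)$. $(DS_1)$ is immediate from $\sigma(1)=1$ (Proposition~\ref{is-eq-60}(1)); $(DS_2)$ follows at once from the monotonicity axiom $(IS_1)$. The substantive point is $(DS_3)$: assuming $\sigma(x)=1$ and $\sigma(y\thicksim x)=1$, I must deduce $\sigma(y)=1$. The plan is to rewrite $y\thicksim x$ in the form that $(IS_2)$ (or $(IS'_2)$) controls, namely $x\wedge y\thicksim x$. First, Proposition~\ref{ps-eq-20}(5) (after swapping the roles of $x$ and $y$) gives $y\thicksim x\le x\wedge y\thicksim x$, whence $\sigma(x\wedge y\thicksim x)=1$ by $(IS_1)$. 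Next, Proposition~\ref{ps-eq-20}(3) provides $x\le(x\wedge y\thicksim x)\backsim y$, so applying $(IS_1)$ once more yields $\sigma((x\wedge y\thicksim x)\backsim y)=1$. Now $(IS_2)$ gives
\[
1=\sigma(x\wedge y\thicksim x)=\sigma(y)\thicksim\sigma((x\wedge y\thicksim x)\backsim y)=\sigma(y)\thicksim 1=\sigma(y),
\]
using $(A_3)$ at the last step. The type II case is analogous: Proposition~\ref{ps-eq-20}(4) (with $x$ and $y$ swapped) supplies $x\le(x\wedge y\thicksim y)\backsim x$, and $(IS'_2)$ collapses in exactly the same way.

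Part (3) is a one-liner once (2) is in hand: Proposition~\ref{lds-eqa-45} states that every deductive system of an invariant pseudo equality algebra is a subalgebra. Part (4) is an immediate equivalence: the strong condition $(IS_5)$ gives $\sigma(x\thicksim y)=\sigma(x\backsim y)$, so $x\thicksim y\in\Ker(\sigma)$ iff $x\backsim y\in\Ker(\sigma)$ (and similarly with $x,y$ swapped), which is precisely $(DS_4)$.

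The only real obstacle is locating the right inequalities in Proposition~\ref{ps-eq-20} to feed the first and third arguments of $(IS_2)$/$(IS'_2)$ so that the whole expression collapses via $(A_3)$; once the two key bounds $y\thicksim x\le x\wedge y\thicksim x$ and $x\le(x\wedge y\thicksim x)\backsim y$ (respectively $x\le(x\wedge y\thicksim y)\backsim x$ for type II) are identified, $(DS_3)$ falls out cleanly and the remaining parts are formal consequences.
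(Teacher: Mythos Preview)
Your proof is correct and follows essentially the same approach as the paper. The only organizational difference in part~(2) is that the paper, after establishing $\sigma(x\wedge y\thicksim x)=1$ and $x\le(x\wedge y\thicksim x)\backsim y$, invokes Proposition~\ref{is-eq-70}(1) (which already packages the type I/type II case split) to obtain $\sigma((x\wedge y\thicksim x)\backsim y)\le\sigma(x\wedge y\thicksim x)\backsim\sigma(y)=1\backsim\sigma(y)=\sigma(y)$, whereas you bypass that proposition and apply $(IS_2)$ (resp.\ $(IS'_2)$) directly, collapsing via $(A_3)$; both routes use the same key inequalities from Proposition~\ref{ps-eq-20} and are equally valid.
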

\begin{proof}
$(1)$ Let $x\in \Ker(\sigma)\cap \Img(\sigma)$. It follows that $\sigma(x)=1$ and there exists $x_1\in A$ 
such that $x=\sigma(x_1)$. Hence $1=\sigma(x)=\sigma(\sigma(x_1))=\sigma(x_1)=x$. Thus $x=1$ and we 
conclude that $\Ker(\sigma)\cap \Img(\sigma)=\{1\}$. \\
$(2)$ Obviously $1\in \Ker(\sigma)$. \\
Consider $x, y\in A$ such that $x\in \Ker(\sigma)$ and $x\le y$. \\
Then by $(IS_1)$, $1=\sigma(x)\le \sigma(y)$, so $\sigma(y)=1$. Hence $y\in \Ker(\sigma)$. \\
Take $x, y\in A$ such that $x, y\thicksim x \in \Ker(\sigma)$, that is 
$\sigma(x)=\sigma(y\thicksim x)=1$. \\
Since by Proposition \ref{ps-eq-20}$(5)$ we have 
$y\thicksim x \le x\wedge y \thicksim x$, it follows that $1=\sigma(y\thicksim x) \le \sigma(x\wedge y \thicksim x)$, 
thus $\sigma(x\wedge y \thicksim x)=1$. 
By Proposition \ref{ps-eq-20}$(1)$,$(3)$ we have $y\le x\wedge y\thicksim x$ and 
$x\le (x\wedge y\thicksim x)\backsim y$. 
Applying Proposition \ref{is-eq-70} we get   
$1=\sigma(x)\le \sigma((x\wedge y\thicksim x)\backsim y)\le\sigma(x\wedge y\thicksim x)\backsim \sigma(y)=
1\backsim \sigma(y)=\sigma(y)$. It follows that $\sigma(y)=1$, that is $y\in \Ker(\sigma)$. \\ 
We conclude that $\Ker(\sigma)$ is a deductive system of $A$. \\
$(3)$ It is a corollary of $(2)$ and Proposition \ref{lds-eqa-45}. \\ 
$(4)$ If $x\thicksim y \in \Ker(\sigma)$, then $\sigma(x\backsim y)=\sigma(x\thicksim y)=1$, hence 
$x\backsim y \in \Ker(\sigma)$. Similarly from $x\backsim y \in \Ker(\sigma)$ we get $x\thicksim y \in \Ker(\sigma)$, 
thus $\Ker(\sigma)\in {\mathcal DS}_n(A)$. 
\end{proof}

Let $(A, \wedge, \thicksim, \backsim, 1)$ be a pseudo equality algebra and $\sigma$ be a strong internal 
state of type I or type II on $A$. 
Denote $K=\Ker(\sigma)$. 
Since $K\in {\mathcal DS}_n(A)$, it follows that $\Theta_K\in {\mathcal Con}(A)$. 
According to \cite{Dvu7}, $(A/{\Theta_K},\thicksim,\backsim,1/\Theta_K)$ is a pseudo equality algebra with the natural operations induced from those of $A$. \\ 
In what follows we define the notion of an internal state on pseudo BCK-meet-semilattices and we investigate 
the connection between the internal states on a pseudo equality algebra $(A, \wedge, \thicksim, \backsim, 1)$ 
and the internal states on its corresponding pseudo BCK(pC)-meet-semilattice 
$\Psi(A)=(A, \wedge, \rightarrow, \rightsquigarrow, 1)$. 
For more details regarding the internal states on pseudo BCK-algebras we refer the reader to \cite{Ciu7}.

\begin{Def} \label{is-eq-110} $\rm($\cite{Ciu7}$\rm)$ Let $(B, \wedge, \rightarrow, \rightsquigarrow, 1)$ be a pseudo BCK-meet-semilattice and $\mu:B \longrightarrow B$ be a unary operator on $B$. For all $x, y\in B$ consider the following axioms:\\
$(SB_1)$ $\mu(x)\le \mu(y)$, whenever $x\le y,$ \\
$(SB_2)$ $\mu(x\rightarrow y)=\mu((x\rightarrow y)\rightsquigarrow y)\rightarrow \mu(y)$ and 
         $\mu(x\rightsquigarrow y)=\mu((x\rightsquigarrow y)\rightarrow y)\rightsquigarrow \mu(y),$ \\
$(SB^{'}_2)$ $\mu(x\rightarrow y)=\mu((y\rightarrow x)\rightsquigarrow x)\rightarrow \mu(y)$ and
             $\mu(x\rightsquigarrow y)=\mu((y\rightsquigarrow x)\rightarrow x)\rightsquigarrow \mu(y),$ \\ 
$(SB_3)$ $\mu(\mu(x)\rightarrow \mu(y))=\mu(x)\rightarrow \mu(y)$ and 
         $\mu(\mu(x)\rightsquigarrow \mu(y))=\mu(x)\rightsquigarrow \mu(y),$ \\ 
$(SB_4)$ $\mu(\mu(x)\wedge \mu(y))=\mu(x)\wedge \mu(y)$. \\
Then: \\
$(i)$ $\mu$ is called an \emph{internal state of type I} or a \emph{state operator of type I} or 
a \emph{type I state operator} if it satisfies axioms $(SB_1)$, $(SB_2)$, $(SB_3)$, $(SB_4);$ \\    
$(ii)$ $\mu$ is called an \emph{internal state of type II} or a \emph{state operator of type II} or a 
\emph{type II state operator} if it satisfies axioms $(SB_1)$, $(SB^{'}_2)$, $(SB_3)$, $(SB_4)$. \\
The structure $(B, \rightarrow, \rightsquigarrow, \mu, 1)$ ($(B,\mu)$, for short) is called a 
\emph{state pseudo BCK-meet-semilattice of type I (type II)}, respectively.
\end{Def}

Denote $\mathcal{IS_{BCK}}^{(I)}(B)$ and $\mathcal{IS}_{BCK}^{(II)}(B)$ the set of all internal states of 
type I and II on a pseudo BCK-meet-semilattice $B$, respectively. \\
For $\mu \in \mathcal{IS}_{BCK}^{(I)}(B)$ or $\mu \in \mathcal{IS}_{BCK}^{(II)}(B)$, 
$\Ker(\mu)=\{x\in B \mid \mu(x)=1\}$  is called the \emph{kernel} of $\mu$. 

\begin{theo} \label{is-eq-120} Let $(A, \wedge, \thicksim, \backsim, 1)$ be a pseudo equality algebra and let $\Psi(A)=(A,\wedge,\rightarrow,\rightsquigarrow,1)$ be its corresponding pseudo BCK(pC)-meet-semilattice. 
Then $\mathcal{IS}_{EQA}^{(I)}(A)\subseteq \mathcal{IS}_{BCK}^{(I)}(\Psi(A))$ and 
$\mathcal{IS}_{EQA}^{II}(A)\subseteq \mathcal{IS}_{BCK}^{II}(\Psi(A))$.
\end{theo}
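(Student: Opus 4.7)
The plan is to show that any $\sigma \in \mathcal{IS}_{EQA}^{(I)}(A)$, viewed as a unary map on the underlying set of $\Psi(A)$, satisfies the four axioms $(SB_1)$--$(SB_4)$ of Definition \ref{is-eq-110}, and analogously for type II. Since $\Psi(A)$ has the same underlying set, order, and meet as $A$, axioms $(SB_1)$ and $(SB_4)$ are literally $(IS_1)$ and $(IS_4)$, so there is nothing to verify there.

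For $(SB_2)$ the strategy is to rewrite the $A$-side terms in $(IS_2)$ as $\Psi(A)$-side terms using $x \rightarrow y = x\wedge y \thicksim x$ and $x \rightsquigarrow y = x \backsim x\wedge y$. The key bridge is that whenever $a \le b$ one has $a \thicksim b = b \rightarrow a$ and $b \backsim a = b \rightsquigarrow a$, since then $a \wedge b = a$. I will check, using Proposition \ref{ps-eq-20}(1) (which gives $y \le x \wedge y \thicksim x$ and $y \le x \backsim x\wedge y$), that
\[
(x\wedge y\thicksim x)\backsim y = (x\rightarrow y)\rightsquigarrow y, \qquad y\thicksim(x\backsim x\wedge y) = (x\rightsquigarrow y)\rightarrow y,
\]
by expanding $(x\rightarrow y)\rightsquigarrow y = (x\rightarrow y)\backsim((x\rightarrow y)\wedge y) = (x\rightarrow y)\backsim y$, and dually. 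Proposition \ref{ps-eq-20}(4) then gives $y \le (x\rightarrow y)\rightsquigarrow y$ and $y \le (x\rightsquigarrow y)\rightarrow y$, so $(IS_1)$ yields $\sigma(y)\le \sigma((x\rightarrow y)\rightsquigarrow y)$ and similarly for the dual expression, licensing the conversion from $\thicksim,\backsim$ to $\rightarrow,\rightsquigarrow$. After substitution, $(IS_2)$ reads exactly as
\[
\sigma(x\rightarrow y) = \sigma((x\rightarrow y)\rightsquigarrow y) \rightarrow \sigma(y), \quad \sigma(x\rightsquigarrow y) = \sigma((x\rightsquigarrow y)\rightarrow y) \rightsquigarrow \sigma(y),
\]
which is $(SB_2)$. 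For type II I do the same translation with $(x\wedge y\thicksim y)\backsim x = (y\rightarrow x)\rightsquigarrow x$ and its dual, using Proposition \ref{ps-eq-10}(8) to secure the inequality $y \le (y\rightarrow x)\rightsquigarrow x$ needed for the $\thicksim$-to-$\rightarrow$ conversion; this turns $(IS'_2)$ into $(SB'_2)$ line by line.

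For $(SB_3)$ the main tool is Proposition \ref{is-eq-60}(3),(4), which says $\sigma(A) = \{x : \sigma(x)=x\}$ is a subalgebra of $A$. By $(IS_4)$, $\sigma(x)\wedge\sigma(y) \in \sigma(A)$, hence
\[
\sigma(x)\rightarrow \sigma(y) = \sigma(x)\wedge \sigma(y) \thicksim \sigma(x) = \sigma(\sigma(x)\wedge \sigma(y)) \thicksim \sigma(\sigma(x)),
\]
which is fixed by $\sigma$ by $(IS_3)$; the analogous computation with $\backsim$ handles the $\rightsquigarrow$-clause. This gives $(SB_3)$ in both types simultaneously.

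The main (only) obstacle is the bookkeeping in the $(SB_2)$ step: translating the $\backsim,\thicksim$ expressions appearing in $(IS_2)$ and $(IS'_2)$ into $\rightarrow,\rightsquigarrow$ expressions of $\Psi(A)$, and at each substitution verifying the comparabilities (via Propositions \ref{ps-eq-10} and \ref{ps-eq-20}) that allow the meet inside $x \rightsquigarrow y = x \backsim (x\wedge y)$ or the one inside $x \rightarrow y = (x\wedge y)\thicksim x$ to collapse. Once these equalities are recorded, the axioms match essentially verbatim.
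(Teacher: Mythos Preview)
Your proposal is correct and follows essentially the same route as the paper's proof: the paper also derives $(SB_1)$ and $(SB_4)$ immediately from $(IS_1)$ and $(IS_4)$, obtains $(SB_2)$ (resp.\ $(SB'_2)$) by rewriting the terms in $(IS_2)$ (resp.\ $(IS'_2)$) via the identities $(x\wedge y\thicksim x)\backsim y=(x\rightarrow y)\rightsquigarrow y$ and $y\thicksim(x\backsim x\wedge y)=(x\rightsquigarrow y)\rightarrow y$ together with the comparabilities from Propositions~\ref{ps-eq-20} and~\ref{ps-eq-10}(8), and gets $(SB_3)$ by writing $\sigma(x)\rightarrow\sigma(y)=\sigma(\sigma(x)\wedge\sigma(y))\thicksim\sigma(\sigma(x))$ and invoking $(IS_3)$, $(IS_4)$. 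Your write-up is in fact slightly cleaner in that it makes the ``$a\le b\Rightarrow a\thicksim b=b\rightarrow a$'' bridge explicit at each step.
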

\begin{proof} Consider $\sigma:A \longrightarrow A$ satisfying $(IS_1)$, $(IS_2)$, $(IS^{'}_2)$, $(IS_3)$, $(IS_4)$. \\ 
Obviously $(SB_1)$ is satisfied due to $(IS_1)$. \\ 
By Proposition \ref{ps-eq-20}$(1)$, $y\le x\wedge y\thicksim x$, so $(x\wedge y\thicksim x)\wedge y=y$. \\ 
From Proposition \ref{ps-eq-20}$(4)$ we have $y\le (x\wedge y\thicksim x)\backsim y$, thus 
$\sigma(y)\le \sigma(((x\wedge y\thicksim x)\backsim y)$. \\
Applying the definition of $x\rightarrow y$ and $(IS_2)$ we get:\\ 
$\hspace*{0.5cm}$
$\sigma (x\rightarrow y)=\sigma(x\wedge y\thicksim x)=
\sigma(y)\thicksim \sigma((x\wedge y\thicksim x) \backsim y)$ \\
$\hspace*{2.2cm}$
$=\sigma((x\wedge y\thicksim x)\thicksim y)\wedge \sigma(y)\thicksim \sigma((x\wedge y\thicksim x) \backsim y)$ \\  
$\hspace*{2.2cm}$
$=\sigma((x\wedge y\thicksim x)\thicksim (x\wedge y\thicksim x)\wedge y)\wedge \sigma(y)
\thicksim \sigma((x\wedge y\thicksim x) \backsim (x\wedge y\thicksim x)\wedge y)$ \\
$\hspace*{2.2cm}$
$=\sigma((x\rightarrow y)\rightsquigarrow y)\rightarrow \sigma(y)$. \\
Similarly $\sigma (x\rightsquigarrow y)=\sigma((x\rightsquigarrow y)\rightarrow y)\rightsquigarrow \sigma(y)$, 
thus $(SB_2)$ is satisfied. \\
By Proposition \ref{ps-eq-20}$(1)$, $x\le x\wedge y\thicksim y$, so $(x\wedge y\thicksim y)\wedge x=x$. \\ 
From Proposition \ref{ps-eq-20}$(3)$ we have $y\le (x\wedge y\thicksim y)\backsim x$, thus 
$\sigma(y)\le \sigma(((x\wedge y\thicksim y)\backsim x)$. \\
Applying $(IS^{'}_2)$ we get:\\ 
$\hspace*{0.5cm}$
$\sigma (x\rightarrow y)=\sigma(x\wedge y\thicksim x)=
\sigma(y)\thicksim \sigma((x\wedge y\thicksim y) \backsim x)$ \\
$\hspace*{2.2cm}$
$=\sigma((x\wedge y\thicksim y)\thicksim x)\wedge \sigma(y)\thicksim \sigma((x\wedge y\thicksim y) \backsim x)$ \\  
$\hspace*{2.2cm}$
$=\sigma((x\wedge y\thicksim y)\thicksim (x\wedge y\thicksim y)\wedge x)\wedge \sigma(y)
\thicksim \sigma((x\wedge y\thicksim y) \backsim (x\wedge y\thicksim y)\wedge x)$ \\
$\hspace*{2.2cm}$
$=\sigma((y\rightarrow x)\rightsquigarrow x)\rightarrow \sigma(y)$. \\
Similarly $\sigma (x\rightsquigarrow y)=\sigma((y\rightsquigarrow x)\rightarrow x)\rightsquigarrow \sigma(y)$, 
thus $(SB^{'}_2)$ is satisfied. \\
From $(IS_3)$ and $(IS_4)$ we have:\\
$\hspace*{0.5cm}$
$\sigma (\sigma(x)\rightarrow \sigma(y))=\sigma (\sigma(x)\wedge \sigma(y)\thicksim \sigma(x)) =
\sigma(\sigma(x)\wedge \sigma(y))\thicksim \sigma((\sigma(x))$ \\
$\hspace*{3.2cm}$ 
$=\sigma(x)\wedge \sigma(y)\thicksim \sigma(x)=\sigma(x)\rightarrow \sigma(y)$. \\
Similarly $\sigma (\sigma(x)\rightsquigarrow \sigma(y))=\sigma(x)\rightsquigarrow \sigma(y)$, hence $(SB_3)$. \\
Since $(SB_4)$ is in fact $(IS_4)$, it follows that $\sigma$ satisfies $(SB_1)$, $(SB_2)$, $(SB^{'}_2)$, $(SB_3)$, $(SB_4)$. 
We conclude that $\mathcal{IS}_{EQA}^{(I)}(A)\subseteq \mathcal{IS}_{BCK}^{(I)}(\Psi(A))$ and 
$\mathcal{IS}_{EQA}^{(II)}(A)\subseteq \mathcal{IS}_{BCK}^{(II)}(\Psi(A))$.
\end{proof}

\begin{theo} \label{is-eq-130} Let $(A, \wedge, \thicksim, \backsim, 1)$ be a linearly ordered symmetric pseudo  equality algebra and let $\Psi(A)=(A,\wedge,\rightarrow,\rightsquigarrow,1)$ be its corresponding pseudo 
BCK(pC)-meet-semilattice. Then $\mathcal{IS}_{BCK}^{(I)}(\Psi(A))\subseteq \mathcal{IS}_{EQA}^{(I)}(A)$ and 
$\mathcal{IS}_{BCK}^{(II)}(\Psi(A))\subseteq \mathcal{IS}_{EQA}^{(II)}(A)$. 
\end{theo}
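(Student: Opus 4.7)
My plan is to verify each of the four state-axioms $(IS_1)$--$(IS_4)$ (respectively $(IS'_2)$ in place of $(IS_2)$ for the Type II claim) for an operator $\mu$ on $A$, starting from the corresponding BCK-state-axioms $(SB_1)$--$(SB_4)$ on $\Psi(A)$. The axioms $(IS_1)$ and $(IS_4)$ read verbatim like $(SB_1)$ and $(SB_4)$, so they transfer with no work. The substantive content lies in $(IS_2)$ (or $(IS'_2)$) and $(IS_3)$.

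For $(IS_2)$ the plan is to rewrite $(SB_2)$ through the $\Psi$-dictionary. The defining formula $x\rightarrow y = x\wedge y\thicksim x$ together with $y\le x\rightarrow y$ (Proposition \ref{ps-eq-10}(7)) yields $(x\rightarrow y)\wedge y=y$ and hence $(x\rightarrow y)\rightsquigarrow y = (x\rightarrow y)\backsim y$; moreover $y\le (x\rightarrow y)\backsim y$ by Proposition \ref{ps-eq-20}(4), so monotonicity $(SB_1)$ gives $\mu(y)\le \mu((x\rightarrow y)\backsim y)$. Since in $\Psi(A)$ we have $b\rightarrow a = b\wedge a\thicksim b = a\thicksim b$ whenever $a\le b$, this comparability lets me turn the outer $\rightarrow$ of $(SB_2)$ into a $\thicksim$. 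The resulting identity is exactly the first equation of $(IS_2)$; the $\rightsquigarrow$-half of $(SB_2)$ handled dually gives the $\backsim$-equation. The Type II claim is obtained by substituting $(y\rightarrow x)\rightsquigarrow x = (y\rightarrow x)\backsim x$ (which follows from $x\le y\rightarrow x$) in the analogous manipulation of $(SB'_2)$.

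For $(IS_3)$ both linearity and symmetry enter crucially. Symmetry collapses the two implications on $\Psi(A)$: $x\rightsquigarrow y = x\backsim(x\wedge y) = (x\wedge y)\thicksim x = x\rightarrow y$. Setting $u=\mu(x)$ and $v=\mu(y)$, linearity lets me compare them. When $u\le v$ the identity $u\thicksim v = v\wedge u\thicksim v = v\rightarrow u$ holds, and $(SB_3)$ with the roles of $x,y$ swapped yields $\mu(u\thicksim v) = u\thicksim v$; when $v\le u$ one invokes the symmetric identity $u\thicksim v = v\backsim u$ together with the coincidence $\rightarrow=\rightsquigarrow$ to reduce the expression to the form $(SB_3)$ can handle. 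The $\backsim$-half of $(IS_3)$ is settled dually by the same case analysis, with symmetry exchanging the roles of $\thicksim$ and $\backsim$.

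The chief obstacle is precisely the second case of the $(IS_3)$ verification: the value $\mu(x)\thicksim\mu(y)$ when $\mu(y)\le\mu(x)$ (a ``big $\thicksim$ small'' in the linear order) is not, at first sight, captured by $\rightarrow$ or $\rightsquigarrow$ evaluated on the pair $\mu(x),\mu(y)$. The linearly ordered symmetric hypothesis is exactly what is needed: it lets one re-express the apparently uncaptured direction so that $(SB_3)$ (and, if necessary, the already-established $(IS_2)$ together with Proposition \ref{ps-eq-30}) can be brought to bear. This is the only delicate point; once it is navigated, the remaining verifications proceed by the direct translations described above, and the dual argument establishes the Type II inclusion.
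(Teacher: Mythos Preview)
Your proposal follows essentially the same route as the paper's proof: $(IS_1)$ and $(IS_4)$ are read off from $(SB_1)$ and $(SB_4)$; the identities $(IS_2)$ and $(IS'_2)$ are obtained by rewriting $(SB_2)$ and $(SB'_2)$ through the dictionary $x\rightarrow y=x\wedge y\thicksim x$, $x\rightsquigarrow y=x\backsim x\wedge y$ using $y\le x\rightarrow y$ and monotonicity; and $(IS_3)$ is handled by a two-case argument based on linear comparability, with symmetry used to pass between $\thicksim$ and $\backsim$. The only minor difference is cosmetic: the paper splits the cases on $x\le y$ versus $y\le x$ (and then uses $(SB_1)$ to compare $\mu(x)$ and $\mu(y)$), whereas you split directly on $\mu(x)\le\mu(y)$ versus $\mu(y)\le\mu(x)$; both are valid in a linearly ordered algebra and lead to the same computations.
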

\begin{proof} Consider $\mu:\Psi(A) \longrightarrow \Psi(A)$ satisfying $(SB_1)$, $(SB_2)$, $(SB^{'}_2)$, $(SB_3)$, $(SB_4)$ and let $x, y\in A$. 
Obviously $(SB_1)$ and $(SB_4)$ are satisfied due to $(IS_1)$ and $(IS_4)$, respectively. \\ 
Applying axioms $(SB_2)$, $(B_2)$ and Lemma \ref{ps-eq-40}$(4)$, we have: \\
$\hspace*{1cm}$
$\mu(x\wedge y\thicksim x)=\mu(x\rightarrow y)=\mu((x\rightarrow y)\rightsquigarrow y)\rightarrow \mu(y) $\\
$\hspace*{3.2cm}$
$=\mu((x\rightarrow y)\rightsquigarrow y)\wedge \mu(y)\thicksim \mu((x\rightarrow y)\rightsquigarrow y)=
\mu(y)\thicksim \mu((x\rightarrow y)\rightsquigarrow y)$ \\
$\hspace*{3.2cm}$
$=\mu(y)\thicksim \mu((x\rightarrow y)\backsim (x\rightarrow y)\wedge y)= 
\mu(y)\thicksim \mu((x\wedge y\thicksim x)\backsim y)$. \\
$\hspace*{1cm}$
$\mu(x\backsim x\wedge y)=\mu(x\rightsquigarrow y)=\mu((x\rightsquigarrow y)\rightarrow y)\rightsquigarrow \mu(y) $\\
$\hspace*{3.2cm}$
$=\mu((x\rightsquigarrow y)\rightarrow y)\backsim \mu((x\rightsquigarrow y)\rightarrow y)\wedge \mu(y)=
\mu((x\rightsquigarrow y)\rightarrow y)\backsim \mu(y)$ \\ 
$\hspace*{3.2cm}$
$=\mu((x\rightsquigarrow y)\wedge y\thicksim (x\rightsquigarrow y))\backsim \mu(y)=
\mu(y\thicksim (x\backsim x\wedge y))\backsim \mu(y)$. \\
Hence $\mu$ satisfies $(IS_2)$. \\ 
Similarly, by $(SB^{'}_2)$ we get: \\
$\mu(x\wedge y\thicksim x)=\mu(y)\thicksim \mu((x\wedge y\thicksim y) \backsim x)$ and 
$\mu(x\backsim x\wedge y)=\mu(x\thicksim (y\backsim x\wedge y)) \backsim \mu(y)$, \\ 
thus $\mu$ satisfies $(IS^{'}_2)$. \\
For axiom $(IS_3)$ we will apply the fact that $A$ is a symmetric pseudo equality algebra and consider two cases: \\
$(I)$ Suppose $x\le y$, hence $\mu(x)\le \mu(y)$. We have: \\ 
$\hspace*{1cm}$
$\mu(\mu(x)\thicksim \mu(y))=\mu(\mu(x)\wedge \mu(y)\thicksim \mu(y))=\mu(\mu(y)\rightarrow \mu(x))= 
\mu(y)\rightarrow \mu(x)$ \\
$\hspace*{3.6cm}$
$=\mu(x)\wedge \mu(y)\thicksim \mu(y)=\mu(x)\thicksim \mu(y)$. \\ 
$\hspace*{1cm}$
$\mu(\mu(x)\backsim \mu(y))=\mu(\mu(y)\thicksim \mu(x))=\mu(y)\thicksim \mu(x)=\mu(x)\backsim \mu(y)$. \\
$(II)$ Suppose $y\le x$, so $\mu(y)\le \mu(x)$. Then: \\ 
$\hspace*{1cm}$
$\mu(\mu(x)\backsim \mu(y))=\mu(\mu(x)\backsim \mu(x)\wedge \mu(y))=\mu(\mu(x)\rightsquigarrow \mu(y))= 
\mu(x)\rightsquigarrow \mu(y)$ \\
$\hspace*{3.6cm}$
$=\mu(x)\backsim \mu(x)\wedge \mu(y)=\mu(x)\backsim \mu(y)$. \\
$\hspace*{1cm}$
$\mu(\mu(x)\thicksim \mu(y))=\mu(\mu(y)\backsim \mu(x))=\mu(y)\backsim \mu(x)=\mu(x)\thicksim \mu(y)$. \\
Hence $(IS_3)$ is verified.  
We conclude that $\mathcal{IS}_{BCK}^{(I)}(\Psi(A))\subseteq \mathcal{IS}_{EQA}^{(I)}(A)$ and 
$\mathcal{IS}_{BCK}^{(II)}(\Psi(A))\subseteq \mathcal{IS}_{EQA}^{(I)I}(A)$. 
\end{proof}

\begin{cor} \label{is-eq-140} If $\mathcal{A}=(A, \wedge, \thicksim, \backsim, 1)$ is a linearly ordered 
symmetric pseudo equality algebra, then $\mathcal{IS}_{EQA}^{(I)}(A)=\mathcal{IS}_{BCK}^{(I)}(\Psi(A))$ and 
$\mathcal{IS}_{EQA}^{(II)}(A)=\mathcal{IS}_{BCK}^{(II)}(\Psi(A))$.
\end{cor}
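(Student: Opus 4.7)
The plan is to derive this corollary directly by combining the two preceding theorems, which already provide both inclusions. Specifically, Theorem \ref{is-eq-120} gives the inclusions $\mathcal{IS}_{EQA}^{(I)}(A)\subseteq \mathcal{IS}_{BCK}^{(I)}(\Psi(A))$ and $\mathcal{IS}_{EQA}^{(II)}(A)\subseteq \mathcal{IS}_{BCK}^{(II)}(\Psi(A))$ for every pseudo equality algebra (no further hypothesis is needed). Theorem \ref{is-eq-130} supplies the reverse inclusions $\mathcal{IS}_{BCK}^{(I)}(\Psi(A))\subseteq \mathcal{IS}_{EQA}^{(I)}(A)$ and $\mathcal{IS}_{BCK}^{(II)}(\Psi(A))\subseteq \mathcal{IS}_{EQA}^{(II)}(A)$ precisely under the assumptions that $A$ is linearly ordered and symmetric.

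First, I would observe that the ambient hypothesis of the corollary, namely that $\mathcal{A}$ is a linearly ordered symmetric pseudo equality algebra, is exactly what Theorem \ref{is-eq-130} requires, while Theorem \ref{is-eq-120} places no restriction on $A$ beyond being a pseudo equality algebra. Hence both theorems apply simultaneously to $\mathcal{A}$. Then I would invoke the two inclusions of each type and take their conjunction to obtain equality.

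Since the verification reduces to pointing to the two results and doing no additional algebra, I do not anticipate a technical obstacle. The only point to state carefully is that nothing in Theorem \ref{is-eq-120} restricts the class of pseudo equality algebras considered, so the stronger assumptions of the corollary are used solely to trigger Theorem \ref{is-eq-130}. This gives a clean two-line argument: apply \ref{is-eq-120} and \ref{is-eq-130} to obtain $\mathcal{IS}_{EQA}^{(I)}(A)=\mathcal{IS}_{BCK}^{(I)}(\Psi(A))$, and similarly for type II.
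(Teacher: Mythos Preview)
Your proposal is correct and matches the paper's approach: the corollary is stated immediately after Theorems \ref{is-eq-120} and \ref{is-eq-130} with no proof given, precisely because it follows by combining the two inclusions as you describe.
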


\begin{ex} \label{is-eq-150} Consider the commutative pseudo equality algebra $(B, \wedge, \thicksim, \backsim, 1)$ 
from Example \ref{ps-ex-50} and the maps $\mu_i:B\longrightarrow B$, $i=1,\cdots,6$ given in the table below:
\[
\begin{array}{c|ccccc}
 x & 0 & a & b & 1 \\ \hline
\mu_1(x) & 0 & 0 & 1 & 1 \\
\mu_2(x) & 0 & a & b & 1 \\ 
\mu_3(x) & 0 & 1 & 0 & 1 \\ 
\mu_4(x) & a & a & 1 & 1 \\
\mu_5(x) & b & 1 & b & 1 \\
\mu_6(x) & 1 & 1 & 1 & 1  
\end{array}
. 
\]
Then $\mathcal{IS}_{BCK}^{(I)}(\Psi(B))=\mathcal{IS}_{BCK}^{(II)}(\Psi(B))=\mathcal{IS}_{EQA}^{(I)}(B)=
\mathcal{IS}_{EQA}^{(II)}(B)=\{\mu_1, \mu_2, \mu_3, \mu_4, \mu_5, \mu_6\}$. 
\end{ex}

\begin{theo} \label{is-eq-160} Let $(B, \wedge, \rightarrow, \rightsquigarrow, 1)$ be a pseudo 
BCK(pC)-meet-semilattice and let $\Phi(B)=(B, \wedge, \thicksim=\leftarrow, \backsim=\rightsquigarrow, 1)$ 
be its corresponding pseudo equality algebra. \\ 
Then $\mathcal{IS}^{(I)}_{BCK}(B)\subseteq \mathcal{IS}^{(I)}_{EQA}(\Phi(B))$ and 
$\mathcal{IS}^{II}_{BCK}(B)\subseteq \mathcal{IS}^{II}_{EQA}(\Phi(B))$.
\end{theo}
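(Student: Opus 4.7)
The plan is to take an arbitrary $\mu$ satisfying the pseudo BCK axioms on $B$ and verify the pseudo equality axioms on $\Phi(B)$ one by one, exploiting the fact that the operations on $\Phi(B)$ are explicit term-definitions in $B$, namely $x\thicksim y=y\rightarrow x$ and $x\backsim y=x\rightsquigarrow y$. The workhorse identity throughout will be Remark \ref{ps-eq-100-10}: in a pseudo BCK(pC)-meet-semilattice, $x\rightarrow x\wedge y=x\rightarrow y$ and $x\rightsquigarrow x\wedge y=x\rightsquigarrow y$. This is precisely what converts the ``$x\wedge y\thicksim x$''-style terms appearing in the EQA axioms into plain implications in $B$.

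First I would dispose of $(IS_1)$ and $(IS_4)$: these are literally identical to $(SB_1)$ and $(SB_4)$, since $\wedge$ and $\le$ are the same in $B$ and $\Phi(B)$. Next, for $(IS_3)$, I would rewrite $\mu(x)\thicksim\mu(y)=\mu(y)\rightarrow\mu(x)$ and $\mu(x)\backsim\mu(y)=\mu(x)\rightsquigarrow\mu(y)$, so that $(IS_3)$ becomes exactly $(SB_3)$ applied to the pair $(\mu(y),\mu(x))$ and $(\mu(x),\mu(y))$, respectively.

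The substantive step is $(IS_2)$ for the type I case. Using the definitions of $\thicksim,\backsim$ in $\Phi(B)$ and Remark \ref{ps-eq-100-10}, one has
\[
x\wedge y\thicksim x=x\rightarrow x\wedge y=x\rightarrow y,\qquad
(x\wedge y\thicksim x)\backsim y=(x\rightarrow y)\rightsquigarrow y,
\]
and also $\sigma(y)\thicksim\sigma(\cdot)=\sigma(\cdot)\rightarrow\sigma(y)$. Hence the first half of $(IS_2)$ translates verbatim to $\mu(x\rightarrow y)=\mu((x\rightarrow y)\rightsquigarrow y)\rightarrow\mu(y)$, which is the first half of $(SB_2)$. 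The second half is obtained symmetrically, using $x\backsim x\wedge y=x\rightsquigarrow y$ and $y\thicksim(x\backsim x\wedge y)=(x\rightsquigarrow y)\rightarrow y$.

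Finally, for the type II inclusion, the same translation applied to $(IS_2')$ gives, via $x\wedge y\thicksim y=y\rightarrow y\wedge x=y\rightarrow x$ (again by Remark \ref{ps-eq-100-10}) and $y\backsim x\wedge y=y\rightsquigarrow x$, exactly the axiom $(SB_2')$. I expect no genuine obstacle here; the only thing one must be careful with is bookkeeping of which variable plays which role on the two sides of the $\thicksim,\backsim$ operations, since these are asymmetric, so that the $(x\wedge y\thicksim y)\backsim x$-type terms on the EQA side line up correctly with the $(y\rightarrow x)\rightsquigarrow x$-type terms on the BCK side. Once each axiom is matched, both inclusions $\mathcal{IS}^{(I)}_{BCK}(B)\subseteq\mathcal{IS}^{(I)}_{EQA}(\Phi(B))$ and $\mathcal{IS}^{(II)}_{BCK}(B)\subseteq\mathcal{IS}^{(II)}_{EQA}(\Phi(B))$ follow.
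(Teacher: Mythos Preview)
Your proposal is correct and follows essentially the same approach as the paper's proof: both dispose of $(IS_1)$, $(IS_3)$, $(IS_4)$ by direct translation via $x\thicksim y=y\rightarrow x$, $x\backsim y=x\rightsquigarrow y$, and both reduce $(IS_2)$ and $(IS_2')$ to $(SB_2)$ and $(SB_2')$ using Remark~\ref{ps-eq-100-10} to rewrite $x\wedge y\thicksim x=x\rightarrow y$ and $x\backsim x\wedge y=x\rightsquigarrow y$. The bookkeeping you flag is exactly what the paper carries out line by line.
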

\begin{proof} Consider $\mu:B\longrightarrow B$ satisfying $(SB_1)$, $(SB_2)$, $(SB^{'}_2)$, $(SB_3)$, $(SB_4)$. \\ 
Axioms $(IS_1)$ and $(IS_4)$ are straightforward. Let $x, y\in A$. \\ 
Applying Remark \ref{ps-eq-100-10} and $(SB_2)$ we have: \\ 
$\hspace*{1cm}$
$\mu(x\wedge y\thicksim x)=\mu(x\rightarrow x\wedge y)=\mu(x\rightarrow y)= 
\mu((x\rightarrow y)\rightsquigarrow y)\rightarrow \mu(y)$ \\
$\hspace*{3.2cm}$
$=\mu(y)\thicksim \mu((x\rightarrow x\wedge y)\rightsquigarrow y)=
\mu(y)\thicksim \mu((x\wedge y\thicksim x)\backsim y)$. \\
$\hspace*{1cm}$
$\mu(x\backsim x\wedge y)=\mu(x\rightsquigarrow x\wedge y)=\mu(x\rightsquigarrow y)= 
\mu((x\rightsquigarrow y)\rightarrow y)\rightsquigarrow \mu(y)$ \\
$\hspace*{3.2cm}$
$=\mu(y\thicksim (x\rightsquigarrow x\wedge y)\backsim \mu(y)=
\mu(y\thicksim (x\backsim x\wedge y)\backsim \mu(y))$. \\
Hence $\mu$ satisfies axiom $(IS_2)$. Similarly \\ 
$\hspace*{1cm}$ 
$\mu(x\wedge y\thicksim x)=\mu(y)\thicksim \mu((x\wedge y\thicksim y) \backsim x)$ and \\
$\hspace*{1cm}$ 
$\mu(x\backsim x\wedge y)=\mu(x\thicksim (y\backsim x\wedge y)) \backsim \mu(y)$, \\ 
thus axiom $(IS^{'}_2)$ is also verified. \\ 
For axiom $(IS_3)$ we have: \\
$\hspace*{1cm}$ 
$\mu(\mu(x)\thicksim \mu(y))=\mu(\mu(y)\rightarrow \mu(x))=\mu(y)\rightarrow \mu(x)=\mu(x)\thicksim \mu(y)$ and \\  
$\hspace*{1cm}$ 
$\mu(\mu(x)\backsim \mu(y))=\mu(\mu(x)\rightsquigarrow \mu(y))=\mu(x)\rightsquigarrow \mu(y)=\mu(x)\backsim \mu(y)$. \\
Hence $\mu$ satisfies axioms $(IS_1)$, $(IS_2)$, $(IS^{'}_2)$, $(IS_3)$, $(IS_4)$. \\
We conclude that  
$\mathcal{IS}^{(I)}_{BCK}(B)\subseteq \mathcal{IS}^{(I)}_{EQA}(\Phi(B))$ and 
$\mathcal{IS}^{II}_{BCK}(B)\subseteq \mathcal{IS}^{II}_{EQA}(\Phi(B))$.
\end{proof}

\begin{ex} \label{is-eq-170} Consider the BCK(C)-lattice $(B,\wedge,\rightarrow,1)$ and its corresponding pseudo
equality algebra $\Phi(B)=(B, \wedge, \thicksim, \backsim, 1)$ from Example \ref{ps-ex-50}.  
Let $\mu_i:B\longrightarrow B$, $i=1,\cdots,6$ be the maps defined in Example \ref{is-eq-150}. 
Then $\mathcal{IS}_{BCK}^{(I)}(B)=\mathcal{IS}_{BCK}^{(II)}(B)=\mathcal{IS}_{EQA}^{(I)}(\Phi(B))=
\mathcal{IS}_{EQA}^{(II)}(\Phi(B))=\{\mu_1, \mu_2, \mu_3, \mu_4, \mu_5, \mu_6\}$. 
\end{ex}

$\vspace*{5mm}$

\section{States-morphism pseudo equality algebras}

In this section we define and study the state-morphism operators on pseudo equality algebras and on their 
corresponding pseudo BCK(pC)-meet-semilattices, and we investigate the connections between the state-morphism operators on the two structures.  
We prove that any state-morphism on a pseudo equality algebra is an internal state of type I. 
It is showen that any state-morphism on a pseudo equality algebra is a state-morphism on its corresponding pseudo BCK(pC)-meet-semilattice, while the converse is true for the case of linearly ordered symmetric pseudo equality 
algebras.
We show that a state-morphism on a pseudo BCK(pC)-meet-semilattice is also a state-morphism on its corresponding pseudo equality algebra.
We also prove that a state-morphism on the set of regular elements on a compatible pseudo equality algebra $A$ can be extended to a state morphism on $A$. 

\begin{Def} \label{sm-eq-10} Let $(A, \wedge, \thicksim, \backsim, 1)$ be a pseudo equality algebra. 
A \emph{state-morphism operator} on $A$ is a map $\sigma:A\longrightarrow A$ satisfying the following conditions for 
all $x, y\in A$: \\
$(SM_1)$ $\sigma(x\thicksim y)=\sigma(x)\thicksim \sigma(y);$ \\
$(SM_2)$ $\sigma(x\backsim y)=\sigma(x)\backsim \sigma(y);$ \\
$(SM_3)$ $\sigma(x\wedge y)=\sigma(x)\wedge \sigma(y);$ \\
$(SM_4)$ $\sigma(\sigma(x))=\sigma(x)$. \\
The pair $(A, \sigma)$ is called a \emph{state-morphism pseudo equality algebra}.
\end{Def}

Denote $\mathcal{SM}_{EQA}(A)$ the set of all state-morphisms on a pseudo equality algebra $A$. 

\begin{prop} \label{sm-eq-20} A state-morphism operator is an order-preserving homomorphism.
\end{prop}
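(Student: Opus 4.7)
The plan is to verify two things beyond the four defining axioms: that $\sigma$ fixes the constant $1$, and that $\sigma$ preserves the partial order $\le$. The preservation of the three operations $\thicksim$, $\backsim$, $\wedge$ is built into the definition via $(SM_1)$--$(SM_3)$, so homomorphism-hood reduces to handling the constant $1$.

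First I would observe that by axiom $(A_2)$ we have $x \thicksim x = 1$ for any $x \in A$, so choosing any particular $x$ and applying $(SM_1)$ gives
\[
  \sigma(1) \;=\; \sigma(x \thicksim x) \;=\; \sigma(x) \thicksim \sigma(x) \;=\; 1,
\]
using $(A_2)$ once more on the right. Thus $\sigma$ respects the constant, and combined with $(SM_1)$, $(SM_2)$, $(SM_3)$ this establishes that $\sigma$ is a homomorphism of algebras of type $(2,2,2,0)$.

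Next I would handle order preservation. Recall that in a pseudo equality algebra the order is defined by $x \le y$ iff $x \wedge y = x$. So if $x \le y$, applying $(SM_3)$ yields
\[
  \sigma(x) \;=\; \sigma(x \wedge y) \;=\; \sigma(x) \wedge \sigma(y),
\]
which means precisely $\sigma(x) \le \sigma(y)$.

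There is no real obstacle here; the statement is essentially a direct consequence of the axioms $(SM_1)$ and $(SM_3)$, together with $(A_2)$ and the definition of the order. The only thing to be careful about is to derive $\sigma(1) = 1$ from the axioms rather than assume it, and to use the characterization of $\le$ in terms of $\wedge$ (rather than in terms of $\thicksim$ or $\backsim$) to get the shortest argument.
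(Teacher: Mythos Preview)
Your proof is correct and follows essentially the same approach as the paper: derive $\sigma(1)=1$ from $(SM_1)$ and $(A_2)$, note that $(SM_1)$--$(SM_3)$ then give a homomorphism, and obtain order preservation from $(SM_3)$ via the characterization $x\le y \iff x\wedge y = x$.
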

\begin{proof} Let $(A, \wedge, \thicksim, \backsim, 1)$ be a pseudo equality algebra and let $x\in A$. \\
According to $(SM_1)$ we have $\sigma(1)=\sigma(x\thicksim x)=\sigma(x)\thicksim \sigma(x)=1$. 
Taking into consideration conditions $(SM_1)-(SM_3)$, it follows that $\sigma$ is a homomorphism on $A$. \\
Consider $x, y\in A$ such that $x\le y$, that is $x\wedge y=x$. 
Applying $(SM_3)$ we get: $\sigma(x)=\sigma(x\wedge y)=\sigma(x)\wedge \sigma(y)\le \sigma(y)$, thus $\sigma$ is an 
order-preserving homomorphism on $A$.
\end{proof}

\begin{ex} \label{sm-eq-30} Let $(A, \wedge, \thicksim, \backsim, 1)$ be a pseudo equality algebra. 
Then the maps ${\bf 1}_A, \Id_A:A\longrightarrow A$, defined by ${\bf 1}_A(x)=1$ and $Id_A(x)=x$ for all $x\in A$ 
are state-morphisms on $A$. 
\end{ex}

\begin{ex}\label{sm-eq-30-10} Let $A_1$ and $A_2$ be two pseudo equality algebras and let $A$ be the pseudo equality algebra defined in Example \ref{ps-ex-70}. 
Then the maps $\sigma_1, \sigma_2:A\longrightarrow A$ defined by $\sigma_1(x,y)=(x,x)$ and $\sigma_2(x,y)=(y,y)$ for all for all $(x,y)\in A$ are state-morphisms on $A$.
\end{ex}

\begin{ex} \label{sm-eq-30-20} Let $(A, \wedge, \thicksim, \backsim, a, 1)$ be an $a$-compatible 
pseudo equality algebra. Then the map $\sigma_a:A\longrightarrow A$ defined by  
$\sigma_a(x)=x^{\thicksim_a\backsim_a}$ for all $x\in A$, is a state-morphism on $A$. \\
Obviously $\sigma_1=\Id_A$.
\end{ex}

\begin{prop} \label{sm-eq-40} Let $(A, \sigma)$ be a state-morphism pseudo equality algebra. 
Then the following hold:\\
$(1)$ $\Ker(\sigma)\in {\mathcal DS}_n(A);$ \\
$(2)$ $\Ker(\sigma)=\{x\thicksim \sigma(x) \mid x\in A\}=\{\sigma(x)\backsim x \mid x\in A\};$ \\
$(3)$ if $A$ is extensive and $\Ker(\sigma)=\{1\}$, then $\sigma=\Id_A;$ \\
$(4)$ if $A$ is extensive and simple, then $\mathcal{SM}_{EQA}(A)=\{{\bf 1}_A, \Id_A\}$.
\end{prop}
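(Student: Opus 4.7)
The proof proposal breaks into four parts, which should be tackled in the order listed.

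\textbf{Part (1).} The plan is to verify the three axioms of a deductive system and then check normality. Since Proposition \ref{sm-eq-20} says $\sigma(1)=1$, $(DS_1)$ holds; $(DS_2)$ is immediate from order-preservation. For $(DS_3)$, if $x, y\thicksim x\in \Ker(\sigma)$, then by $(SM_1)$ and $(A_3)$,
\[
\sigma(y\thicksim x)=\sigma(y)\thicksim\sigma(x)=\sigma(y)\thicksim 1=\sigma(y),
\]
so $\sigma(y)=1$. For normality, observe that via $(SM_1), (SM_2)$ and Proposition \ref{ps-eq-10}$(3)$, the statement $x\thicksim y, y\thicksim x\in\Ker(\sigma)$ is equivalent to $\sigma(x)\le\sigma(y)$ and $\sigma(y)\le\sigma(x)$, i.e., $\sigma(x)=\sigma(y)$; the same reduction holds for $x\backsim y, y\backsim x\in \Ker(\sigma)$. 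Hence $(DS_4)$ is satisfied.

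\textbf{Part (2).} I would prove both inclusions directly. For $\supseteq$, by $(SM_1)$, $(SM_4)$ and $(A_2)$,
\[
\sigma(x\thicksim\sigma(x))=\sigma(x)\thicksim\sigma(\sigma(x))=\sigma(x)\thicksim\sigma(x)=1,
\]
so $x\thicksim\sigma(x)\in\Ker(\sigma)$; analogously $\sigma(x)\backsim x\in\Ker(\sigma)$. For $\subseteq$, if $y\in\Ker(\sigma)$ then using $(A_3)$,
\[
y=y\thicksim 1=y\thicksim\sigma(y)\qquad\text{and}\qquad y=1\backsim y=\sigma(y)\backsim y,
\]
witnessing $y$ in each of the two sets.

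\textbf{Part (3).} Using part (2), for every $x\in A$ the element $x\thicksim\sigma(x)$ lies in $\Ker(\sigma)=\{1\}$, so $x\thicksim\sigma(x)=1$. By Proposition \ref{ps-eq-10}$(3)$ this forces $\sigma(x)\le x$. The extensivity hypothesis on $A$ is then invoked to supply the opposite inequality $x\le\sigma(x)$, yielding $\sigma(x)=x$. This is the step I expect to be the main obstacle: everything else is a clean computation from $(SM_1)$--$(SM_4)$, but the completion of (3) depends entirely on how ``extensive'' is interpreted for $A$, and its role must be precisely the production of $x\le\sigma(x)$.

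\textbf{Part (4).} This is immediate from the pieces already assembled. By part (1) $\Ker(\sigma)\in \mathcal{DS}_n(A)\subseteq \mathcal{DS}(A)$, and simplicity gives $\Ker(\sigma)\in\{\{1\},A\}$. If $\Ker(\sigma)=A$ then $\sigma\equiv 1={\bf 1}_A$; if $\Ker(\sigma)=\{1\}$, then extensivity and part (3) force $\sigma=\Id_A$. Hence $\mathcal{SM}_{EQA}(A)\subseteq\{{\bf 1}_A,\Id_A\}$, and the reverse containment is Example \ref{sm-eq-30}.
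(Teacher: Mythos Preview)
Your proof is correct and follows essentially the same approach as the paper's own proof in every part; your normality argument in (1) is in fact a slightly cleaner packaging of the same idea. Regarding your flagged concern in (3): the paper itself never defines ``extensive'' and uses the hypothesis in exactly the way you anticipate, namely to supply $x\le\sigma(x)$ once $\sigma(x)\le x$ has been obtained.
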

\begin{proof} 
$(1)$ Obviously $1\in \Ker(\sigma)$. \\
Consider $x, y\in A$ such that $x\in \Ker(\sigma)$ and $x\le y$. \\
Then by Proposition \ref{sm-eq-20}, $1=\sigma(x)\le \sigma(y)$, so $\sigma(y)=1$. Hence $y\in \Ker(\sigma)$. \\
Take $x, y\in A$ such that $x, y\thicksim x \in \Ker(\sigma)$, that is 
$\sigma(x)=\sigma(y\thicksim x)=1$. \\ 
We have: $\sigma(y)=\sigma(y)\thicksim 1=\sigma(y)\thicksim \sigma(x)=\sigma(y\thicksim x)=1$, that is 
$y\in \Ker(\sigma)$. \\
We conclude that $\Ker(\sigma)$ is a deductive system of $A$. \\
Consider $x, y\in A$ such that $x\thicksim y, y\thicksim x \in \Ker(\sigma)$, that is 
$\sigma(x\thicksim y)=\sigma(y\thicksim x)=1$. 
It follows that $\sigma(x)\thicksim \sigma(y)=1$ and $\sigma(y)\thicksim \sigma(x)=1$.  
Hence, by Proposition \ref{ps-eq-10}$(3)$, $\sigma(x)=\sigma(y)$. 
We get $\sigma(x\backsim y)=\sigma(x)\backsim \sigma(y)=1$ and 
$\sigma(y\backsim y)=\sigma(y)\backsim \sigma(x)=1$, hence $y\backsim y, x\backsim y \in \Ker(\sigma)$. \\ 
Similarly from $y\backsim x, x\backsim y \in \Ker(\sigma)$ we get $x\thicksim y, y\thicksim x \in \Ker(\sigma)$. \\ 
Thus $\Ker(\sigma)\in {\mathcal DS}_n(A)$. \\
$(2)$ Denote $X=\{x\thicksim \sigma(x) \mid x\in A\}$. Suppose $x\in \Ker(\sigma)$, that is $\sigma(x)=1$. \\
It follows that $x=x\thicksim 1=x\thicksim \sigma(x) \in X$, hence $\Ker(\sigma)\subseteq X$. \\
Conversely, consider $y\in X$, thus there exists $x\in A$ such that $y=x\thicksim \sigma(x)$. \\
We have $\sigma(y)=\sigma(x\thicksim \sigma(x))=\sigma(x)\thicksim \sigma(\sigma(x))=\sigma(x)\thicksim \sigma(x)=1$, 
thus $y\in \Ker(\sigma)$. \\
Hence $X\subseteq \Ker(\sigma)$, and we conclude that $\Ker(\sigma)=\{x\thicksim \sigma(x) \mid x\in A\}$. \\
Similarly $\Ker(\sigma)=\{\sigma(x)\backsim x \mid x\in A\}$. \\
$(3)$ By $(2)$, $x\thicksim \sigma(x), \sigma(x)\backsim x \in \Ker(\sigma)$ for all $x\in A$, hence 
$x\thicksim \sigma(x)=\sigma(x)\backsim x=1$, that is $\sigma(x)\le x$ for all $x\in A$. 
Since $A$ is extensive, it follows that $\sigma=\Id_A$. \\ 
$(4)$ Since $A$ is a simple pseudo equality algebra, we have $\Ker(\sigma)=\{1\}$ or $\Ker(\sigma)=A$. 
Applying $(3)$, it follows that $\sigma=\Id_A$ or $\sigma={\bf 1}_A$, that is 
$\mathcal{SM}_{EQA}(A)=\{{\bf 1}_A, \Id_A\}$.
\end{proof}

\begin{lemma} \label{sm-eq-40-10} Let $(A, \wedge, \thicksim, \backsim, a, 1)$ be a pointed pseudo equality algebra 
and $\sigma:A\longrightarrow A$ be a state-morphism operator on $A$ such that $\sigma(a)=a$. Then: \\
$(1)$ $\sigma(x^{\thicksim_a})=\sigma(x)^{\thicksim_a}$ and $\sigma(x^{\backsim_a})=\sigma(x)^{\backsim_a};$ \\
$(2)$ $\sigma(x^{\thicksim_a\backsim_a})=\sigma(x)^{\thicksim_a\backsim_a}$ and 
$\sigma(x^{\backsim_a\thicksim_a})=\sigma(x)^{\backsim_a\thicksim_a}$.
\end{lemma}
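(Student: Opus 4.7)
The plan is to prove both parts as direct computations using the definitions of the $a$-relative negations together with the state-morphism axioms $(SM_1)$, $(SM_2)$ and the hypothesis $\sigma(a)=a$. Part $(2)$ will then follow by iterating part $(1)$, so the essential content lies in $(1)$.

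For $(1)$, I would unfold the left-hand side using Definition \ref{pps-eq-20}: $x^{\thicksim_a} = a \thicksim x$. Applying $(SM_1)$ gives $\sigma(a \thicksim x) = \sigma(a) \thicksim \sigma(x)$, and the assumption $\sigma(a)=a$ rewrites this as $a \thicksim \sigma(x)$, which by definition equals $\sigma(x)^{\thicksim_a}$. The second identity is symmetric: from $x^{\backsim_a} = x \backsim a$, axiom $(SM_2)$ yields $\sigma(x) \backsim \sigma(a) = \sigma(x) \backsim a = \sigma(x)^{\backsim_a}$.

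For $(2)$, I would simply compose the two identities from $(1)$. Writing the iterated negation as $x^{\thicksim_a \backsim_a} = (x^{\thicksim_a})^{\backsim_a}$ and applying the second identity of $(1)$ to $x^{\thicksim_a}$ gives $\sigma((x^{\thicksim_a})^{\backsim_a}) = \sigma(x^{\thicksim_a})^{\backsim_a}$; then applying the first identity of $(1)$ inside yields $(\sigma(x)^{\thicksim_a})^{\backsim_a} = \sigma(x)^{\thicksim_a \backsim_a}$. The analogous chain, applying the first identity of $(1)$ outside and the second identity of $(1)$ inside, handles $\sigma(x^{\backsim_a \thicksim_a}) = \sigma(x)^{\backsim_a \thicksim_a}$.

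There is no real obstacle here; the lemma is essentially a bookkeeping statement saying that state-morphisms commute with the $a$-relative negations whenever the distinguished constant $a$ is a fixed point of $\sigma$. The only thing to be careful about is correctly tracking which argument position $a$ sits in for $\thicksim_a$ versus $\backsim_a$, so that the right axiom, $(SM_1)$ or $(SM_2)$, is invoked in each case. No further properties of $\sigma$ (such as $(SM_3)$, $(SM_4)$, or the consequences derived in Proposition \ref{sm-eq-20}) are needed.
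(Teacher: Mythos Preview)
Your proof is correct and essentially identical to the paper's: both unfold the definitions of $x^{\thicksim_a}$ and $x^{\backsim_a}$, apply $(SM_1)$ and $(SM_2)$ together with $\sigma(a)=a$ for part $(1)$, and then note that part $(2)$ is an immediate consequence of $(1)$.
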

\begin{proof} Applying $(SM_1)$ and $(SM_2)$ we have:\\
$(1)$ $\sigma(x^{\thicksim_a})=\sigma(a\thicksim x)=\sigma(a)\thicksim \sigma(x)=
a\thicksim \sigma(x)=\sigma(x)^{\thicksim_a}$ and similarly $\sigma(x^{\backsim_a})=\sigma(x)^{\backsim_a}$. \\
$(2)$ It is a consequence of $(1)$.
\end{proof}

\begin{theo} \label{sm-eq-50} For any pseudo equality algebra $(A, \wedge, \thicksim, \backsim, 1)$,  
$\mathcal{SM}_{EQA}(A)\subseteq \mathcal{IS}_{EQA}^{(I)}(A)$. 
\end{theo}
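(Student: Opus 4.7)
Let $\sigma\in \mathcal{SM}_{EQA}(A)$. The plan is to verify axioms $(IS_1)$, $(IS_2)$, $(IS_3)$, $(IS_4)$ in turn, observing that three of them are essentially immediate and the only real work is in $(IS_2)$.

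First, $(IS_1)$ is already available: by Proposition \ref{sm-eq-20}, every state-morphism is order-preserving. Next, for $(IS_3)$ and $(IS_4)$ I would just combine $(SM_1)$, $(SM_2)$, $(SM_3)$ with the idempotency $(SM_4)$: for instance $\sigma(\sigma(x)\thicksim\sigma(y))=\sigma(\sigma(x))\thicksim\sigma(\sigma(y))=\sigma(x)\thicksim\sigma(y)$, and analogously for $\backsim$ and for $\wedge$. These are one-line verifications.

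The main step is $(IS_2)$, and the key observation is that the two identities needed are exactly the images under $\sigma$ of the identities already established in Proposition \ref{ps-eq-30}. Concretely, that proposition gives
\[
y\thicksim\bigl((x\wedge y\thicksim x)\backsim y\bigr)=x\wedge y\thicksim x
\quad\text{and}\quad
\bigl(y\thicksim(x\backsim x\wedge y)\bigr)\backsim y=x\backsim x\wedge y.
\]
Applying $\sigma$ to both sides of the first identity and using $(SM_1)$ on the outermost $\thicksim$ yields
\[
\sigma(y)\thicksim \sigma\bigl((x\wedge y\thicksim x)\backsim y\bigr)=\sigma(x\wedge y\thicksim x),
\]
which is the first half of $(IS_2)$. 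The second half follows in the same way from the second identity in Proposition \ref{ps-eq-30}, this time using $(SM_2)$ on the outer $\backsim$.

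I do not expect any serious obstacle: the content of the proof is essentially that the state-morphism axioms are strong enough to push $\sigma$ through the equalities of Proposition \ref{ps-eq-30}, which is precisely the shape of $(IS_2)$. If anything, the only subtlety is to read the axiom $(IS_2)$ correctly and match it against the right form of Proposition \ref{ps-eq-30}, which is a matter of bookkeeping rather than of real mathematics.
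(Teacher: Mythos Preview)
Your proposal is correct and follows essentially the same approach as the paper: $(IS_1)$ via Proposition \ref{sm-eq-20}, $(IS_3)$ and $(IS_4)$ by combining the homomorphism conditions with idempotency, and $(IS_2)$ by applying $\sigma$ to the identities of Proposition \ref{ps-eq-30} and pushing $\sigma$ through the outermost $\thicksim$ (resp.\ $\backsim$) using $(SM_1)$ (resp.\ $(SM_2)$). If anything, your justification of $(IS_3)$ is slightly more explicit than the paper's.
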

\begin{proof} Let $\sigma\in \mathcal{SM}_{EQA}(A)$. Obviously $(IS_1)$ is verified by Proposition \ref{sm-eq-20}, 
while $(IS_3)$ and $(IS_4)$ follow from $(SM_3)$ and $(SM_4)$. 
In order to prove $(IS_2)$, we apply Proposition \ref{ps-eq-30}: \\ 
$\sigma(y)\thicksim \sigma((x\wedge y\thicksim x) \backsim y)=
\sigma(y\thicksim ((x\wedge y\thicksim x) \backsim y))=\sigma(x\wedge y\thicksim x)$ and \\
$\sigma(y\thicksim (x\backsim x\wedge y)) \backsim \sigma(y)=
\sigma((y\thicksim (x\backsim x\wedge y)) \backsim y)=\sigma(x\backsim x\wedge y)$, \\
thus $(IS_2)$ is verified. 
We conclude that $\sigma\in \mathcal{IS}_{EQA}^{(I)}(A)$, that is 
$\mathcal{SM}_{EQA}(A)\subseteq \mathcal{IS}_{EQA}^{(I)}(A)$.
\end{proof}

\begin{rem} \label{sm-eq-50-10}
If $(A,\wedge,\thicksim,\backsim,1)$ is a commutative pseudo equality algebra, then according to 
Remark \ref{is-eq-40}, 
$\mathcal{SM}_{EQA}(A)\subseteq \mathcal{IS}_{EQA}^{(I)}(A)=\mathcal{IS}_{EQA}^{(II)}(A)$.
\end{rem}

\begin{ex} \label{sm-eq-50-20} In the Example \ref{is-eq-50}, \\
$\hspace*{2cm}$
$\mathcal{SM}_{EQA}(B)=\mathcal{IS}_{EQA}^{(I)}(B)=\mathcal{IS}_{EQA}^{(II)}(B)= 
\{\sigma_1, \sigma_2, \sigma_3, \sigma_4, \sigma_5, \sigma_6\}$. \\
This is in accordance with Remark \ref{sm-eq-50-10} and Example \ref{ps-ex-100}.
\end{ex}

\begin{theo} \label{sm-eq-60} Let $(A, \wedge, \thicksim, \backsim, a, 1)$ be a pointed $a$-compatible 
pseudo equality algebra and $\sigma:\mbox{\Reg}_a(A) \rightarrow \mbox{\Reg}_a(A)$ be a state-morphism operator on $\mbox{\Reg}_a(A)$ such that $\sigma(a)=a$. Then the mapping $\tilde \sigma:A \rightarrow A$ defined by $\tilde \sigma(x)=\sigma(x^{\thicksim_a\backsim_a})$ is a state-morphism operator on $A$ such that 
$\tilde \sigma_{\mid \mbox{\Reg}_a(A)}=\sigma$.
\end{theo}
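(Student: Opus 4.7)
The plan is to verify that $\tilde\sigma$ is well-defined, satisfies $(SM_1)$–$(SM_4)$, and restricts to $\sigma$ on $\mbox{\Reg}_a(A)$. The well-definedness amounts to showing that $x^{\thicksim_a\backsim_a}\in\mbox{\Reg}_a(A)$ for every $x\in A$, so that $\sigma$ can be applied. For this I would argue as follows: by $(C_4)$ we have $x^{\thicksim_a\backsim_a\thicksim_a}=x^{\thicksim_a}$, and applying $\backsim_a$ gives $(x^{\thicksim_a\backsim_a})^{\thicksim_a\backsim_a}=x^{\thicksim_a\backsim_a}$. Using $a$-goodness, $x^{\thicksim_a\backsim_a}=x^{\backsim_a\thicksim_a}$, and a symmetric use of the second half of $(C_4)$ shows $(x^{\thicksim_a\backsim_a})^{\backsim_a\thicksim_a}=x^{\thicksim_a\backsim_a}$. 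Thus $\gamma(x)\in\mbox{\Reg}_a(A)$, which also makes this an instance of Proposition~\ref{pps-eq-130-20} (idempotence of the closure $\gamma$).

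Next, I would check the state-morphism axioms for $\tilde\sigma$. For $(SM_1)$, apply $(C_1)$ to rewrite $(x\thicksim y)^{\thicksim_a\backsim_a}=x^{\thicksim_a\backsim_a}\thicksim y^{\thicksim_a\backsim_a}$; since both operands on the right lie in $\mbox{\Reg}_a(A)$ by the previous step, we can use $(SM_1)$ for $\sigma$ on $\mbox{\Reg}_a(A)$ to obtain $\tilde\sigma(x\thicksim y)=\sigma(x^{\thicksim_a\backsim_a})\thicksim\sigma(y^{\thicksim_a\backsim_a})=\tilde\sigma(x)\thicksim\tilde\sigma(y)$. The conditions $(SM_2)$ and $(SM_3)$ go through in exactly the same way via $(C_2)$ and $(C_3)$, respectively. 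For $(SM_4)$, note that $\sigma$ takes values in $\mbox{\Reg}_a(A)$, so $\sigma(x^{\thicksim_a\backsim_a})^{\thicksim_a\backsim_a}=\sigma(x^{\thicksim_a\backsim_a})$; hence
\[
\tilde\sigma(\tilde\sigma(x))=\sigma\bigl(\sigma(x^{\thicksim_a\backsim_a})^{\thicksim_a\backsim_a}\bigr)=\sigma\bigl(\sigma(x^{\thicksim_a\backsim_a})\bigr)=\sigma(x^{\thicksim_a\backsim_a})=\tilde\sigma(x),
\]
using $(SM_4)$ for $\sigma$. Finally, if $x\in\mbox{\Reg}_a(A)$ then $x^{\thicksim_a\backsim_a}=x$, so $\tilde\sigma(x)=\sigma(x)$, giving the restriction statement.

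The bulk of the argument is mechanical once the compatibility identities are in hand; the only real obstacle is the first step, namely checking that $\gamma(x)\in\mbox{\Reg}_a(A)$ so that $\tilde\sigma$ actually feeds an element of the domain of $\sigma$. This is where both halves of the goodness/compatibility hypothesis are used together—$(C_4)$ gives the $\thicksim_a\backsim_a$-idempotence and $a$-goodness is needed to collapse the mixed composition $\backsim_a\thicksim_a$ on $\gamma(x)$. Once this is in place, $(C_1)$, $(C_2)$, $(C_3)$ turn the $\tilde\sigma$-identities into the corresponding $\sigma$-identities on $\mbox{\Reg}_a(A)$ with no further subtlety.
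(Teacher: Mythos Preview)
Your proof is correct and follows essentially the same route as the paper's: use $(C_1)$--$(C_3)$ to push the double negation $\gamma(x)=x^{\thicksim_a\backsim_a}$ through $\thicksim$, $\backsim$, $\wedge$ and then invoke the corresponding $(SM_i)$ for $\sigma$ on $\mbox{\Reg}_a(A)$. Your treatment is in fact a bit more careful than the paper's in two respects: you explicitly verify well-definedness (that $\gamma(x)\in\mbox{\Reg}_a(A)$, which the paper leaves implicit via Remark~\ref{pps-eq-150}$(3)$ and Proposition~\ref{pps-eq-130-20}), and your $(SM_4)$ argument---using that $\sigma$ has codomain $\mbox{\Reg}_a(A)$---is slightly cleaner than the paper's, which instead appeals to $(C_4)$ and Lemma~\ref{sm-eq-40-10}$(2)$ to commute $\sigma$ past $(\cdot)^{\thicksim_a\backsim_a}$.
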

\begin{proof} Since $\sigma$ satisfies $(SM_1)$, $(SM_2)$, applying $(C_1)$ we get:\\
$\hspace*{1cm}$
$\tilde \sigma (x\thicksim y)=\sigma((x\thicksim y)^{\thicksim_a\backsim_a})=
\sigma(x^{\thicksim_a\backsim_a}\thicksim y^{\thicksim_a\backsim_a})$ \\
$\hspace*{2.5cm}$
$=\sigma(x^{\thicksim_a\backsim_a})\thicksim \sigma(y^{\thicksim_a\backsim_a})=
\tilde \sigma (x)\thicksim \tilde \sigma (y)$. \\
Thus $\tilde \sigma$ satisfies $(SM_1)$. 
We can prove similarly that $\tilde \sigma$ satisfies $(SM_2)$. \\
Applying $(C_3)$ and $(SM_3)$ for $\sigma$ we have:\\
$\hspace*{1cm}$
$\tilde \sigma (x\wedge y)=\sigma((x\wedge y)^{\thicksim_a\backsim_a})=\sigma(x\wedge y)^{\thicksim_a\backsim_a}=
(\sigma(x)\wedge \sigma(y))^{\thicksim_a\backsim_a}$ \\
$\hspace*{2.5cm}$
$=\sigma(x)^{\thicksim_a\backsim_a}\wedge \sigma(y)^{\thicksim_a\backsim_a}=
\sigma(x^{\thicksim_a\backsim_a})\wedge \sigma(y^{\thicksim_a\backsim_a})=
\tilde \sigma (x)\wedge \tilde \sigma(y)$. \\
Hence $\tilde \sigma$ satisfies $(SM_3)$. 
Applying $(C_4)$ we have: \\
$\hspace*{1cm}$
$\tilde \sigma(\tilde \sigma(x))=\tilde \sigma(\sigma(x^{\thicksim_a\backsim_a}))=
\sigma(x^{\thicksim_a\backsim_a})^{\thicksim_a\backsim_a}=\sigma(x^{\thicksim_a\backsim_a\thicksim_a\backsim_a})=
\sigma(x^{\thicksim_a\backsim_a})=\tilde \sigma(x)$, \\
that is, $(SM_4)$. 
We conclude that $\tilde \sigma$ is a state-morphism operator on $A$. \\
If $x\in \Reg_a(A)$, then $\tilde \sigma(x)=\sigma(x^{\thicksim_a\backsim_a})=\sigma(x)$, so that 
$\tilde \sigma_{\mid \mbox{\Reg}_a(A)}=\sigma$.
\end{proof}

\begin{prop} \label{sm-eq-60-10} Let $(A, \wedge, \thicksim, \backsim, a, 1)$ be a pointed pseudo equality algebra 
$(a\ne 1)$ and let $s\in \mathcal{BS}^{(a)}_{EQA}(A)$ and $\sigma\in \mathcal{SM}_{EQA}(A)$ such that $\sigma(a)=a$.   
If $s_{\sigma}:A\longrightarrow A$, $s_{\sigma}(x)=s(\sigma(x))$ for all $x\in A$, then 
$s_{\sigma}\in \mathcal{BS}^{(a)}_{EQA}(A)$. 
\end{prop}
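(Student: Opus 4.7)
The plan is to verify directly that $s_\sigma$ satisfies the three Bosbach state axioms $(BS_1)$, $(BS_2)$, $(BS_3)$ from Definition \ref{bs-eq-10}, by combining the morphism properties of $\sigma$ with the state properties of $s$ applied to images of $\sigma$.

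First I would handle $(BS_3)$, which is the shortest: since $\sigma$ is a state-morphism, Proposition \ref{sm-eq-20} gives $\sigma(1)=1$, hence $s_\sigma(1)=s(\sigma(1))=s(1)=1$; and by hypothesis $\sigma(a)=a$, so $s_\sigma(a)=s(\sigma(a))=s(a)=0$.

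Next, for $(BS_1)$, I would use axioms $(SM_1)$ and $(SM_3)$ on $\sigma$ to push it inside the expression $x\wedge y\thicksim x$, obtaining
\[
\sigma(x\wedge y\thicksim x)=\sigma(x\wedge y)\thicksim\sigma(x)=(\sigma(x)\wedge\sigma(y))\thicksim\sigma(x),
\]
and similarly $\sigma(x\wedge y\thicksim y)=(\sigma(x)\wedge\sigma(y))\thicksim\sigma(y)$. Then I would apply axiom $(BS_1)$ for $s$ to the pair $(\sigma(x),\sigma(y))\in A\times A$, which yields
\[
s(\sigma(x))+s(\sigma(x)\wedge\sigma(y)\thicksim\sigma(x))=s(\sigma(y))+s(\sigma(x)\wedge\sigma(y)\thicksim\sigma(y)).
\]
Translating back through the definition $s_\sigma(\,\cdot\,)=s(\sigma(\,\cdot\,))$ gives exactly $(BS_1)$ for $s_\sigma$. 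The argument for $(BS_2)$ is completely analogous, invoking $(SM_2)$ and $(SM_3)$ to compute $\sigma(x\backsim x\wedge y)=\sigma(x)\backsim(\sigma(x)\wedge\sigma(y))$ and $\sigma(y\backsim x\wedge y)=\sigma(y)\backsim(\sigma(x)\wedge\sigma(y))$, and then applying $(BS_2)$ for $s$ at the point $(\sigma(x),\sigma(y))$.

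No step looks genuinely obstructive: the proof is a direct transport of the state axioms along the homomorphism $\sigma$, so the main thing to be careful about is keeping the pairing between $\sigma$-images and the original variables straight. The only ingredients I assume are the axioms $(SM_1)$–$(SM_3)$ for $\sigma$, the equality $\sigma(1)=1$ (Proposition \ref{sm-eq-20}), the hypothesis $\sigma(a)=a$, and the Bosbach axioms for $s$ applied to elements of $\sigma(A)\subseteq A$.
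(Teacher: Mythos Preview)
Your proposal is correct and follows essentially the same approach as the paper: both proofs push $\sigma$ through the expressions $x\wedge y\thicksim x$, $x\wedge y\thicksim y$, $x\backsim x\wedge y$, $y\backsim x\wedge y$ via $(SM_1)$--$(SM_3)$, then apply the Bosbach axioms for $s$ at the pair $(\sigma(x),\sigma(y))$, and handle $(BS_3)$ using $\sigma(1)=1$ and $\sigma(a)=a$. The only difference is cosmetic ordering.
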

\begin{proof}
Let $s\in \mathcal{BS}^{(a)}_{EQA}(A)$ and $x, y\in A$. Then we have:\\
$\hspace*{1cm}$ 
$s_{\sigma}(x)+s_{\sigma}(x\wedge y\thicksim x)=s(\sigma(x))+s(\sigma(x\wedge y\thicksim x))$ \\
$\hspace*{4.7cm}$
$=s(\sigma(x))+s(\sigma(x)\wedge \sigma(y)\thicksim \sigma(x))$ \\
$\hspace*{4.7cm}$
$=s(\sigma(y))+s(\sigma(x)\wedge \sigma(y)\thicksim \sigma(y))$ \\
$\hspace*{4.7cm}$
$=s(\sigma(y))+s(\sigma(x\wedge y\thicksim y))=s_{\sigma}(y)+s_{\sigma}(x\wedge y\thicksim y)$. \\
Similarly $s_{\sigma}(x)+s_{\sigma}(x\backsim x\wedge y)=s_{\sigma}(y)+s_{\sigma}(y\backsim x\wedge y)$. \\
Moreover, $s_{\sigma}(1)=s(\sigma(1))=s(1)=1$ and $s_{\sigma}(a)=s(\sigma(a))=s(a)=0$. \\
It follows that $s_{\sigma}$ satisfies axioms $(BS_1)$, $(BS_2)$, $(BS_3)$, that is 
$s_{\sigma}\in \mathcal{BS}^{(a)}_{EQA}(A)$. 
\end{proof}

In what follows we recall the notion of a state-morphism on pseudo BCK-meet-semilattices and we investigate 
the connection between state-morphisms on a pseudo equality algebra $(A, \wedge, \thicksim, \backsim, 1)$ 
and state-morphisms on its corresponding pseudo BCK(pC)-meet-semilattice 
$\Psi(A)=(A, \wedge, \rightarrow, \rightsquigarrow, 1)$.

\begin{Def} \label{sm-eq-70} $\rm($\cite{Ciu7}$\rm)$ Let $(B,\wedge,\rightarrow,\rightsquigarrow,1)$ be a pseudo BCK-meet-semilattice. A homomorphism $\mu:B\longrightarrow B$ is called a \emph{state-morphism operator} on $B$ if $\mu^2=\mu$, where $\mu^2=\mu\circ \mu$. The pair $(B, \mu)$ is called a \emph{state-morphism pseudo BCK-meet semilattice}.
\end{Def}

Denote $\mathcal{SM}_{BCK}(B)$ the set of all state-morphisms on a pseudo BCK-meet-semilattice $B$. 

\begin{rem} \label{sm-eq-70-10} A state-morphism operator on a pseudo BCK-meet-semilattice $B$ is order-preserving. 
Indeed, let $x, y\in B$ such that $x\le y$. It follows that $\mu(x)\rightarrow \mu(y)=\mu(x\rightarrow y)=\mu(1)=1$, 
that is $\mu(x)\le \mu(y)$.
\end{rem}

\begin{ex} \label{sm-eq-80} If $(B,\wedge,\rightarrow,\rightsquigarrow,1)$ is a pseudo BCK-meet-semilattice,  
then the maps ${\bf 1}_B, \Id_B:B\longrightarrow B$, defined by ${\bf 1}_B(x)=1$ and $\Id_B(x)=x$ for all $x\in B$ 
are  state-morphism operators on $B$. 
\end{ex}

\begin{prop} \label{sm-eq-90} Let $(A, \wedge, \thicksim, \backsim, 1)$ be a pseudo equality algebra and 
$\mathcal{SM}_{BCK}(\Psi(A))$ be its corresponding pseudo BCK(pC)-meet-semilattice.  
Then $\mathcal{SM}_{EQA}(A)\subseteq \mathcal{SM}_{BCK}(\Psi(A))$. 
\end{prop}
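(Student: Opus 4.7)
The plan is to take an arbitrary $\sigma \in \mathcal{SM}_{EQA}(A)$ and verify that it is also a state-morphism on the associated pseudo BCK(pC)-meet-semilattice $\Psi(A)$. By Definition \ref{sm-eq-70}, this amounts to checking that $\sigma$ is a homomorphism with respect to $\wedge$, $\rightarrow$ and $\rightsquigarrow$, together with the idempotency condition $\sigma^2 = \sigma$. The last two are immediate: $(SM_3)$ gives homomorphy with respect to $\wedge$, and $(SM_4)$ gives $\sigma^2 = \sigma$.

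The core of the proof is to convert the identities $(SM_1)$ and $(SM_2)$ for $\thicksim, \backsim$ into the corresponding identities for $\rightarrow, \rightsquigarrow$, using the definitions $x \rightarrow y = x\wedge y \thicksim x$ and $x \rightsquigarrow y = x \backsim x \wedge y$ from Theorem \ref{ps-eq-110}. For the implication $\rightarrow$ I would compute
\[
\sigma(x \rightarrow y) = \sigma(x \wedge y \thicksim x) \stackrel{(SM_1)}{=} \sigma(x\wedge y) \thicksim \sigma(x) \stackrel{(SM_3)}{=} (\sigma(x) \wedge \sigma(y)) \thicksim \sigma(x) = \sigma(x) \rightarrow \sigma(y),
\]
and symmetrically for $\rightsquigarrow$ using $(SM_2)$ and $(SM_3)$:
\[
\sigma(x \rightsquigarrow y) = \sigma(x \backsim x\wedge y) = \sigma(x) \backsim \sigma(x\wedge y) = \sigma(x) \backsim (\sigma(x) \wedge \sigma(y)) = \sigma(x) \rightsquigarrow \sigma(y).
\]

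One should also note that $\sigma(1) = 1$, which follows from $(SM_1)$ applied to $1 = x \thicksim x$, so that $\sigma$ is genuinely a homomorphism of the pseudo BCK-meet-semilattice structure (including the constant). Since no obstacle arises beyond applying the definitions of $\rightarrow, \rightsquigarrow$ and the axioms $(SM_1)$--$(SM_4)$, the proof is essentially a direct verification; the only potentially delicate point is keeping track of the asymmetry between the two implications, but $(SM_1)$ and $(SM_2)$ handle the two sides separately without any extra hypothesis on $A$.
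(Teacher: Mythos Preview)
Your proof is correct and follows essentially the same approach as the paper: both verify that $\sigma$ preserves $\rightarrow$ and $\rightsquigarrow$ by unfolding their definitions in terms of $\thicksim$, $\backsim$, $\wedge$ and applying $(SM_1)$--$(SM_3)$, with the remaining conditions being immediate from $(SM_3)$ and $(SM_4)$. Your version is slightly more explicit in noting $\sigma(1)=1$, but the argument is the same.
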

\begin{proof}
Let $\sigma \in \mathcal{SM}_{EQA}(A)$. Since $\sigma$ is a homomorphism on $A$, we have 
$\sigma(x\rightarrow y)=\sigma(x\wedge y\thicksim x)=\sigma(x\wedge y)\thicksim \sigma(x)=
\sigma(x)\wedge \sigma(y)\thicksim \sigma(x)=
\sigma(x)\rightarrow \sigma(y)$. \\
Similarly $\sigma(x\rightsquigarrow y)=\sigma(x)\rightsquigarrow \sigma(y)$. \\
The other conditions are straightforward, thus $\sigma \in \mathcal{SM}_{BCK}(\Psi(A))$. \\
We conclude that $\mathcal{SM}_{EQA}(A)\subseteq \mathcal{SM}_{BCK}(\Psi(A))$.
\end{proof}

\begin{theo} \label{sm-eq-100} Let $(A, \wedge, \thicksim, \backsim, 1)$ be a linearly ordered symmetric pseudo  equality algebra and let $\Psi(A)=(A,\wedge,\rightarrow,\rightsquigarrow,1)$ be its corresponding pseudo BCK(pC)-meet-semilattice. \\ 
Then $\mathcal{SM}_{BCK}(\Psi(A))\subseteq \mathcal{SM}_{EQA}(A)$.
\end{theo}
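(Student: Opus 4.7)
The plan is to mirror the strategy used in Theorem \ref{is-eq-130}: verify each of the four state-morphism axioms $(SM_1)$--$(SM_4)$ in turn for an arbitrary $\mu \in \mathcal{SM}_{BCK}(\Psi(A))$. Axiom $(SM_3)$ is immediate since $\mu$ already preserves the meet on $\Psi(A)$, and this meet coincides with that of $A$; $(SM_4)$ is the idempotence $\mu \circ \mu = \mu$, which is built into the definition of a BCK state-morphism. The substance of the proof therefore lies in $(SM_1)$ and $(SM_2)$, and here the linearity of $A$ together with the symmetric identity $x \backsim y = y \thicksim x$ does the work.

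Fix $x, y \in A$. By linearity I may assume $x \le y$ (the case $y \le x$ is treated analogously by interchanging the roles of $x$ and $y$ and using $\rightsquigarrow$ in place of $\rightarrow$). Then $x \wedge y = x$, so the definitions $x \rightarrow y = (x \wedge y) \thicksim x$ and $x \rightsquigarrow y = x \backsim (x \wedge y)$ from Theorem \ref{ps-eq-110} yield $y \rightarrow x = x \thicksim y$ and $y \rightsquigarrow x = y \backsim x$. Applying $\mu$ and using its BCK-homomorphism property,
\begin{align*}
\mu(x \thicksim y) &= \mu(y \rightarrow x) = \mu(y) \rightarrow \mu(x), \\
\mu(y \backsim x) &= \mu(y \rightsquigarrow x) = \mu(y) \rightsquigarrow \mu(x).
\end{align*}
By Remark \ref{sm-eq-70-10}, $\mu$ is order-preserving, so $\mu(x) \le \mu(y)$; hence $\mu(y) \rightarrow \mu(x) = (\mu(x) \wedge \mu(y)) \thicksim \mu(y) = \mu(x) \thicksim \mu(y)$ and similarly $\mu(y) \rightsquigarrow \mu(x) = \mu(y) \backsim \mu(x)$. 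This yields $(SM_1)$ directly and $(SM_2)$ for the pair $(y, x)$.

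To obtain the remaining instances $\mu(x \backsim y) = \mu(x) \backsim \mu(y)$ and $\mu(y \thicksim x) = \mu(y) \thicksim \mu(x)$ (still under $x \le y$), I would invoke the symmetric identity. Since $x \backsim y = y \thicksim x$ holds throughout $A$, the same identity restricts to the subalgebra $\mu(A)$, giving both $\mu(x \backsim y) = \mu(y \thicksim x)$ and $\mu(x) \backsim \mu(y) = \mu(y) \thicksim \mu(x)$; combining these with the two equalities already proved closes the required chains. The step I expect to require the most care is this bookkeeping: ensuring that the translation between $\thicksim$ and $\backsim$ via symmetry, combined with the idempotence $\mu \circ \mu = \mu$ on the closed image $\mu(A)$, really does collapse the ``reversed-argument'' expressions such as $\mu(y \thicksim x)$ (for $\mu(x) \le \mu(y)$) to $\mu(y) \thicksim \mu(x)$, in the same spirit as the manipulations carried out for axiom $(IS_3)$ in the proof of Theorem \ref{is-eq-130}.
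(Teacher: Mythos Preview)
Your proposal is correct and follows the paper's proof essentially line for line: both dispatch $(SM_3)$ and $(SM_4)$ immediately, then prove $(SM_1)$ and $(SM_2)$ by splitting on whether $x\le y$ or $y\le x$, translating $\thicksim,\backsim$ to $\rightarrow,\rightsquigarrow$ via $x\wedge y$ in one direction and invoking the symmetric identity $x\backsim y=y\thicksim x$ for the reversed instances. The ``bookkeeping'' step you flag as delicate is exactly what the paper writes as $\mu(x\backsim y)=\mu(y\thicksim x)=\mu(y)\thicksim\mu(x)=\mu(x)\backsim\mu(y)$ in its case $x\le y$ (and dually in the case $y\le x$).
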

\begin{proof} Let $\mu \in \mathcal{SM}_{BCK}(\Psi(A))$. 
Obviously $\mu$ satisfies conditions $(SM_3)$ and $(SM_4)$. \\
Consider $x, y\in A$ such that $x\le y$, so $\mu(x)\le \mu(y)$. Then: \\
$\hspace*{1cm}$
$\mu(x\thicksim y)=\mu(x\wedge y\thicksim y)=\mu(y\rightarrow x)=\mu(y)\rightarrow \mu(x)$ \\
$\hspace*{2.6cm}$
$=\mu(x)\wedge \mu(y)\rightarrow \mu(x)=\mu(x)\thicksim \mu(y)$. \\
$\hspace*{1cm}$
$\mu(x\backsim y)=\mu(y\thicksim x)=\mu(y)\thicksim \mu(x)=\mu(x)\backsim \mu(y)$. \\
Suppose $y\le x$, thus $\mu(y)\le \mu(x)$. We get: \\
$\hspace*{1cm}$
$\mu(x\backsim y)=\mu(x\backsim x\wedge y)=\mu(x\rightsquigarrow y)=\mu(x)\rightsquigarrow \mu(y)$ \\
$\hspace*{2.6cm}$
$=\mu(x)\rightsquigarrow \mu(x)\wedge \mu(y)=\mu(x)\backsim \mu(y)$. \\
$\hspace*{1cm}$
$\mu(x\thicksim y)=\mu(y\backsim x)=\mu(y)\backsim \mu(x)=\mu(x)\thicksim \mu(y)$. \\
Thus $\mu$ satisfies $(SM_1)$ and $(SM_2)$.\\
We conclude that $\mu \in \mathcal{SM}_{EQA}(A)$, that is $\mathcal{SM}_{BCK}(\Psi(A))\subseteq \mathcal{SM}_{EQA}(A)$.
\end{proof}

\begin{theo} \label{sm-eq-110} Let $\mathcal{B}=(B, \wedge, \rightarrow, \rightsquigarrow, 1)$ be a 
pseudo BCK(pC)-meet-semilattice and let 
$\Phi(B)=(B, \wedge, \thicksim=\leftarrow, \backsim=\rightsquigarrow, 1)$ be its corresponding 
pseudo equality algebra. \\
Then $\mathcal{SM}_{BCK}(B)\subseteq \mathcal{SM}_{EQA}(\Phi(B))$.
\end{theo}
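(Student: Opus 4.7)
The plan is to verify directly that any $\mu\in\mathcal{SM}_{BCK}(B)$ satisfies the four axioms $(SM_1)$--$(SM_4)$ of Definition \ref{sm-eq-10} for the pseudo equality algebra $\Phi(B)$, exploiting the fact that in $\Phi(B)$ the operations are literally defined by $x\thicksim y=y\rightarrow y$ and $x\backsim y=x\rightsquigarrow y$, while the meet $\wedge$ is unchanged.

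First I would dispose of the easy conditions. Since $\mu$ is a homomorphism of pseudo BCK-meet-semilattices, in particular it preserves $\wedge$, so $(SM_3)$ is immediate: $\mu(x\wedge y)=\mu(x)\wedge\mu(y)$. The idempotency $(SM_4)$, namely $\mu(\mu(x))=\mu(x)$, is built into the definition of a state-morphism operator on $\mathcal{B}$ via the requirement $\mu^2=\mu$ (Definition \ref{sm-eq-70}).

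Next I would handle the two equality axioms by the obvious translation through $\Phi$. For $(SM_1)$, using that $x\thicksim y=y\rightarrow x$ in $\Phi(B)$ and that $\mu$ commutes with $\rightarrow$, I would compute
\[
\mu(x\thicksim y)=\mu(y\rightarrow x)=\mu(y)\rightarrow\mu(x)=\mu(x)\thicksim\mu(y).
\]
Analogously, for $(SM_2)$, using $x\backsim y=x\rightsquigarrow y$ and that $\mu$ commutes with $\rightsquigarrow$,
\[
\mu(x\backsim y)=\mu(x\rightsquigarrow y)=\mu(x)\rightsquigarrow\mu(y)=\mu(x)\backsim\mu(y).
\]

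There is essentially no obstacle here: once one has recognised that $\Phi$ only renames the two implications as equality operations and leaves $\wedge$ untouched, each of $(SM_1)$--$(SM_4)$ becomes a one-line rewrite of the corresponding property of $\mu$ as a homomorphism of the pseudo BCK(pC)-meet-semilattice $B$. The only mild care required is to make sure that \emph{both} implication axioms of Definition \ref{sm-eq-70} are invoked (one for $\thicksim$, one for $\backsim$), which is guaranteed since a homomorphism in the pseudo BCK setting must preserve both $\rightarrow$ and $\rightsquigarrow$. This yields $\mu\in\mathcal{SM}_{EQA}(\Phi(B))$ and hence the claimed inclusion $\mathcal{SM}_{BCK}(B)\subseteq\mathcal{SM}_{EQA}(\Phi(B))$.
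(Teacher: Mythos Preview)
Your proof is correct and is essentially identical to the paper's own argument: both verify $(SM_1)$--$(SM_4)$ for $\Phi(B)$ by the direct translations $\thicksim=\leftarrow$ and $\backsim=\rightsquigarrow$, with $(SM_3)$ and $(SM_4)$ inherited from $\mu$ being an idempotent homomorphism of the pseudo BCK-meet-semilattice. (Note the typo in your first paragraph, where ``$x\thicksim y=y\rightarrow y$'' should read ``$x\thicksim y=y\rightarrow x$''; you use the correct definition later.)
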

\begin{proof} Consider $\mu \in \mathcal{SM}_{BCK}(B)$ and let $x, y\in A$. We have: \\
$\hspace*{2.6cm}$
$\mu(x\thicksim y)=\mu(y\rightarrow x)=\mu(y)\rightarrow \mu(x)=\mu(x)\thicksim \mu(y)$ and \\
$\hspace*{2.6cm}$
$\mu(x\backsim y)=\mu(x\rightsquigarrow y)=\mu(x)\rightsquigarrow \mu(y)=\mu(x)\backsim \mu(y)$, \\
hence $(SM_1)$ and $(SM_2)$ are verified. \\
Since $\mu \in \mathcal{SM}_{BCK}(B)$, then $(SM_3)$ and $(SM_4)$ are also satisfied. \\
It follows that $\mu \in \mathcal{SM}_{EQA}(\Phi(\mathcal{B}))$, hence 
$\mathcal{SM}_{BCK}(B)\subseteq \mathcal{SM}_{EQA}(\Phi(B))$.
\end{proof}

\begin{ex} \label{sm-eq-120} Consider the BCK(C)-lattice $(B,\wedge,\rightarrow,1)$ and its corresponding pseudo
equality algebra $\Phi(B)=(B, \wedge, \thicksim, \backsim, 1)$ from Example \ref{ps-ex-50}.  
Let $\mu_i:B\longrightarrow B$, $i=1,\cdots,6$ be the maps defined in Example \ref{is-eq-150}. 
Then $\mathcal{SM}_{BCK}(B)=\mathcal{SM}_{EQA}(\Phi(B))=\{\mu_1, \mu_2, \mu_3, \mu_4, \mu_5, \mu_6\}$. 
\end{ex}

$\vspace*{5mm}$

\section{Concluding remarks}

As mentioned in the Introduction, a new concept of FTT has been developed having the structure of truth values 
formed by a linearly ordered good EQ$_{\Delta}$-algebra (\cite{Nov9}) and a fuzzy-equality based logic 
called EQ-logic has also been introduced (\cite{Nov10}).
The study of pseudo equality algebras is motivated by the goal to develop appropriate algebraic semantics 
for FTT, so a concept of FTT should be introduced based on these algebras.  
At the same time, pseudo equality algebras could be intensively studied from an algebraic point of view. 
In this paper we introduced and studied the internal states and the state-morphism operators on 
pseudo equality algebras, and we proved new results regarding these structures. 
Since the above topics are of current interest we suggest further directions of research: \\
$-$ Characterize deductive systems generated by a subset of a pseudo equality algebra in terms of operations $\thicksim$, $\backsim$ and $\wedge$. \\
$-$ Define the notion of state-deductive system and investigate the correspondence between the existence of internal states and the maximal and normal state-deductive systems. \\
$-$ Define and characterize subdirectly irreducible state pseudo equality algebras. \\
$-$ Develop a pseudo equality logic. \\
$-$ Develop a fuzzy type theory based on pseudo equality algebras.




$\vspace*{5mm}$

\setlength{\parindent}{0pt}

\vspace*{3mm}

\begin{flushright}
\begin{minipage}{148mm}\sc\footnotesize
Lavinia Corina Ciungu\\
Department of Mathematics \\
University of Iowa \\
14 MacLean Hall, Iowa City, Iowa 52242-1419, USA \\
{\it E--mail address}: {\tt lavinia-ciungu@uiowa.edu}

\end{minipage}
\end{flushright}


\begin{thebibliography}{99}

\bibitem{Bor1} R.A. Borzooei, A. Dvure\v censkij, O. Zahiri, \emph{State BCK-algebras and state-morphism BCK-algebras}, 
Fuzzy Sets Syst. {\bf 244}(2014), 86--105.

\bibitem{Bos1} B. Bosbach, \emph{$\rm Komplement \ddot{a}re$ Halbgruppen. Axiomatik und Aritmetik}, Fundam. Math.
{\bf 64}(1969), 257--287.

\bibitem{Bos2} B. Bosbach, \emph{$\rm Komplement \ddot{a}re$ Halbgruppen. Kongruenzen and Quotienten}, Fundam.  Math. {\bf 69}(1970), 1--14.

\bibitem{Ciu6} L.C. Ciungu, A. Dvure\v{c}enskij, \emph{Measures, states and de Finetti maps on pseudo BCK-algebras}, Fuzzy Sets Syst. {\bf 160}(2010), 1099--1113.



\bibitem{Ciu1} L.C. Ciungu, \emph{On pseudo equality algebras}, Arch. Math. Logic {\bf 53}(2014), 561--570. 

\bibitem{Ciu2} L.C. Ciungu, \emph{Non-commutative Multiple-Valued Logic Algebras}, Springer, New York, 2014.

\bibitem{Ciu5} L.C. Ciungu, \emph{Internal states on equality algebras}, Soft Comput. {\bf 19}(2015), 939--953. 

\bibitem{Ciu7} L.C. Ciungu, \emph{Commutative deductive systems of pseudo BCK-algebras}, submitted. 

\bibitem{Ciu9} L.C. Ciungu, \emph{Commutative pseudo equality algebras}, submitted.

\bibitem{DiDv1} A. Di Nola, A. Dvure\v censkij, \emph{State-morphism MV-algebras}, Ann. Pure Appl. Logic
{\bf 161}(2009), 161--173.

\bibitem{DiDv1-e} A. Di Nola, A. Dvure\v censkij, A. Lettieri, \emph{Erratum to the paper: ``State-morphism MV-algebras"[Ann. Pure Appl. Logic 161(2009), 161-173]}, Ann. Pure Appl. Logic {\bf 161}(2010), 1605--1607. 



\bibitem{Dvu7} A. Dvure\v censkij, O. Zahiri, \emph{Pseudo equality algebras-revision}, Soft Comput., 
doi: 10.1007/s00500-015-1888-x. 

\bibitem{Dyba1} M. Dyba, V. $\rm Nov\acute{a}k$, \emph{EQ-logics with delta connective}, Iran. J. Fuzzy Syst. 
{\bf 12(2)}(2015), 41--61. 


\bibitem{Fla1} T. Flaminio, F. Montagna, \emph{MV-algebras with internal states and probabilistic fuzzy logics}, Inter. J. Approx. Reasoning \textbf{50}(2009), 138--152. 

\bibitem{Geo15} G. Georgescu, A. Iorgulescu, \emph{Pseudo BCK-algebras: An extension of BCK-algebras}, Proceedings of DMTCS'01: Combinatorics, Computability and Logic, Springer, London, 2001, pp. 97--114.

\bibitem{Geo16} G. Georgescu, L. Leu\c stean, V. Preoteasa, \emph{Pseudo-hoops}, J. Mult.-Val. Log. Soft Comput. 
{\bf 11} (2005), 153--184.



\bibitem{Ior1} A. Iorgulescu, \emph{Classes of pseudo BCK-algebras - Part I}, J. Mult.-Valued Logic Soft Comput. 
{\bf 12}(2006), 71--130.

\bibitem{Ior14} A. Iorgulescu, \emph{Algebras of logic as BCK-algebras}, ASE Ed., Bucharest, 2008.

\bibitem{Jen2} S. Jenei, \emph{Equality algebras}, Studia Logica {\bf 100}(2012), 1201--1209. 

\bibitem{Jen1} S. Jenei, L. $\rm K\acute{o}r\acute{o}di$, \emph{Pseudo equality algebras}, Arch. Math. Logic   
{\bf 52}(2013), 469--481.

\bibitem{Kuhr5} J. $\rm K \ddot{u}hr$, \emph{Pseudo BCK-semilattices}, Demonstr. Math. {\bf 40}(2007), 495--516.

\bibitem{Kuhr6} J. $\rm K \ddot{u}hr$, \emph{Pseudo BCK-algebras and related structures}, Habilitation thesis, 
$\rm Palack\acute{y}$ University in Olomouc, 2007. 

\bibitem{Mun1} D. Mundici, \emph{Averaging the truth-value in \L ukasiewicz sentential logic}, Studia Logica \textbf{55}(1995), 113--127. 

\bibitem{Nov1} V. $\rm Nov\acute{a}k$, \emph{On fuzzy type theory}, Fuzzy Sets Syst. {\bf 149}(2005), 235--273.


\bibitem{Nov2} V. $\rm Nov\acute{a}k$, \emph{EQ-algebras: primary concepts and properties}, In: Proc. Czech-Japan Seminar, Ninth Meeting. Kitakyushu and Nagasaki, Graduate School of Information, Waseda University, August 18--22, 2006. 
 




\bibitem{Nov9} V. $\rm Nov\acute{a}k$, \emph{EQ-algebra-based fuzzy type theory and its extensions}, Logic Journal 
of the IGPL {\bf 19}(2011), 512--542.

\bibitem{Nov10} V. $\rm Nov\acute{a}k$, M. Dyba, \emph{EQ-logics: Non-commutative fuzzy logics based on fuzzy equality}, Fuzzy Sets Syst. {\bf 172}(2011), 13--32.




\end{thebibliography}
\end{document}